\newtheorem{thm}{Theorem}[section]
\newtheorem{lem}[thm]{Lemma}
\newtheorem{prop}[thm]{Proposition}
\newtheorem{cor}[thm]{Corollary}
\theoremstyle{definition}
\newtheorem{prob}[thm]{Problem}
\newtheorem*{rem}{Remark}
\DeclareMathOperator{\insep}{insep}
\newcommand{\setmid}{\mathrel{}\middle|\mathrel{}}
\newcommand{\abs}[1]{\left\lvert#1\right\rvert}
\edef\csname f\@Alph\@tempcnta\endcsname{\noexpand\mathfrak{\@Alph\@tempcnta}}
\edef\csname l\@Alph\@tempcnta\endcsname{\noexpand\mathbb{\@Alph\@tempcnta}}
\edef\csname c\@Alph\@tempcnta\endcsname{\noexpand\mathcal{\@Alph\@tempcnta}}
\title{On Diophantine exponents for Laurent series over a finite field}
\author{Tomohiro Ooto}
\date{}
\begin{document}
	
\address{Graduate School of Pure and Applied Sciences, University of Tsukuba, Tennodai 1-1-1, Tsukuba, Ibaraki, 305-8571, Japan}
\email{ooto@math.tsukuba.ac.jp}
\subjclass[2010]{11J82, 11J61, 11J70}
\keywords{Diophantine approximation, Diophantine exponent, Laurent series over a finite field}

\begin{abstract}
In this paper, we study the properties of Diophantine exponents $w_n$ and $w_n^{*}$ for Laurent series over a finite field.
We prove that for an integer $n\geq 1$ and a rational number $w>2n-1$, there exist a strictly increasing sequence of  positive integers $(k_j)_{j\geq 1}$ and a sequence of algebraic Laurent series $(\xi _j)_{j\geq 1}$ such that $\deg \xi _j =p^{k_j}+1$ and
\begin{equation*}
w_1(\xi _j)=w_1 ^{*}(\xi _j)=\ldots =w_n(\xi _j)=w_n ^{*}(\xi _j)=w
\end{equation*}
for any $j\geq 1$.
For each $n\geq 2$, we give explicit examples of Laurent series $\xi $ for which $w_n(\xi )$ and $w_n^{*}(\xi )$ are different.
\end{abstract}

\maketitle

\section{Introduction}\label{sec:intro}

Mahler \cite{Mahler32} and Koksma \cite{Koksma39} introduced Diophantine exponents which measure the quality of approximation to real numbers.
Using the Diophantine exponents, they classified the set $\lR$ all of real numbers.
Let $\xi $ be a real number and $n\geq 1$ be an integer.
We denote by $w_n(\xi )$ the supremum of the real numbers $w$ which satisfy
\begin{equation*}
0<|P(\xi )|\leq H(P)^{-w}
\end{equation*}
for infinitely many integer polynomials $P(X)$ of degree at most $n$.
Here, $H(P)$ is defined to be the maximum of the absolute values of the coefficients of $P(X)$.
We denote by $w_n ^{*}(\xi )$ the supremum of the real numbers $w^{*}$ which satisfy
\begin{equation*}
0<|\xi -\alpha |\leq H(\alpha )^{-w^{*}-1}
\end{equation*}
for infinitely many algebraic numbers $\alpha $ of degree at most $n$.
Here, $H(\alpha )$ is equal to $H(P)$, where $P(X)$ is the minimal polynomial of $\alpha $ over $\lZ$.

We recall some results on Diophantine exponents.
It is clear that $w_1(\xi )=w_1 ^{*}(\xi )$ for all real numbers $\xi $.
Roth \cite{Roth55} established that $w_1(\xi )=w_1^{*}(\xi )=1$ for all irrational algebraic real numbers $\xi $.
Furthermore, it follows from the Schmidt Subspace Theorem that
\begin{equation}\label{eq:SST}
w_n(\xi )=w_n ^{*}(\xi )=\min \{ n,d-1\} 
\end{equation}
for all $n\geq 1$ and algebraic real numbers $\xi $ of degree $d$.
It is known that
\begin{equation*}
0\leq w_n(\xi )-w_n ^{*}(\xi )\leq n-1
\end{equation*}
for all $n\geq 1$ and real numbers $\xi $ (see Section 3.4 in \cite{Bugeaud04}).
Sprind\u{z}uk \cite{Sprindzuk69} proved that $w_n(\xi )=w_n ^{*}(\xi )=n$ for all $n\geq 1$ and almost all real numbers $\xi $.
Baker \cite{Baker76} proved that for $n\geq 2$, there exists a real number $\xi $ for which $w_n(\xi )$ and $w_n^{*}(\xi )$ are different.
More precisely, he proved that the set of all values taken by the function $w_n-w_n^{*}$ contains the set $[0,(n-1)/n]$ for $n\geq 2$.
In recent years, this result has been improved.
Bugeaud \cite{Bugeaud12, Bugeaud042} showed that the set of all values taken by $w_2-w_2^{*}$ is equal to the closed interval $[0,1]$ and the set of all values taken by $w_3-w_3^{*}$ contains the set $[0,2)$.
Bugeaud and Dujella \cite{Bugeaud11} proved that for any $n\geq 4$, the set of all values taken by $w_n-w_n^{*}$ contains the set $\left[ 0,\frac{n}{2}+\frac{n-2}{4(n-1)}\right) $.

Let $p$ be a prime.
We can define Diophantine exponents $w_n$ and $w_n^{*}$ over the field $\lQ _p$ of $p$-adic numbers in a similar way to the real case.
Analogues of the above results for $p$-adic numbers have been studied (see e.g.\ Section 9.3 in \cite{Bugeaud04} and \cite{Bugeaud15, Pejkovic12}).

Let $p$ be a prime and $q$ be a power of $p$.
Let us denote by $\lF _q$ the finite field of $q$ elements, $\lF _q[T]$ the ring of all polynomials over $\lF _q$, $\lF _q(T)$ the field of all rational functions over $\lF _q$, and $\lF _q((T^{-1}))$ the field of all Laurent series over $\lF _q$.
For $\xi \in \lF _q((T^{-1})) \setminus \{ 0\}$, we can write
\begin{equation*}
\xi = \sum _{n=N}^{\infty }a_n T^{-n},
\end{equation*}
where $N\in \lZ $, $a_n \in \lF _q$ for all $n\geq N$, and $a_N \neq 0$.
We define an absolute value on $\lF _q ((T^{-1}))$ by $|0|:=0$ and $|\xi |:=q^{-N}$.
This absolute value can be uniquely extended to the algebraic closure of $\lF _q((T^{-1}))$ and we continue to write $|\cdot |$ for the extended absolute value.
We call an element of $\lF _q((T^{-1}))$ an {\itshape algebraic Laurent series} if the element is algebraic over $\lF _q(T)$.
We can define Diophantine exponents $w_n$ and $w_n^{*}$ for Laurent series over a finite field in a similar way to the real case.

Mahler \cite{Mahler49} proved that an analogue of the Roth Theorem in this framework does not hold, that is, there exists an algebraic Laurent series $\xi $ such that $w_1(\xi )>1$.
Indeed, let $r$ be a power of $p$ and put $\xi :=\sum _{n=0}^{\infty }T^{-r^n}$.
Then $\xi $ is an algebraic Laurent series of degree $r$ with $w_1(\xi )=r-1$.
After that, several people investigated algebraic Laurent series for which the analogue of the Roth Theorem does not hold (see e.g.\ \cite{Chen13, Firicel13, Mathan92, Schmidt00, Thakur99}).
Furthermore, Thakur \cite{Thakur11, Thakur13} constructed explicit algebraic Laurent series for which the analogue of \eqref{eq:SST} for $n=r+1$ does not hold, where $r$ is a power of $p$.
For example, for integers $m,n\geq 2$, he constructed algebraic Laurent series $\alpha _{m,n}$, with explicit equations and continued fractions, such that
\begin{equation*}
	\deg \alpha _{m,n}\leq r^m+1,\quad \lim_{n\rightarrow \infty }E_1(\alpha _{m,n})=2,\quad \lim_{n\rightarrow \infty }E_{r+1}(\alpha _{m,n})\geq r^{m-1}+\frac{r-1}{(r+1)r}.
\end{equation*}
Here, $E_n(\xi )$ measures the quality of approximations to $\xi $ by algebraic Laurent series of degree $n$ (see Section \ref{sec:mainresult} for the precise definition and relation between $E_n$ and $w_n^{*}$).
Since $w_{r+1}^{*}(\alpha _{m,n})+1\geq E_{r+1}(\alpha_{m,n})$, we obtain
\begin{equation*}
	\lim_{n\rightarrow \infty }w_{r+1}^{*}(\alpha _{m,n})\geq r^{m-1}-\frac{r^2+1}{(r+1)r},
\end{equation*}
which implies that an analogue of \eqref{eq:SST} does not hold for $\alpha _{m,n}$ with sufficiently large $m,n$.
In this paper, we investigate the phenomenon that properties of the Diophantine exponents in characteristic zero are different from that of positive characteristic.
More precisely, for an integer $n\geq 1$, we construct algebraic Laurent series $\xi $ such that $E_1(\xi )$ are large values and
\begin{equation*}
	E_1(\xi ) = \max\{ E_m(\xi )\mid 1\leq m\leq n \} .
\end{equation*}

The author \cite{Ooto17} showed that there exists $\xi \in \lF _q((T^{-1}))$ for which $w_2(\xi )$ and $w_2^{*}(\xi )$ are different.
In this paper, improving the method in \cite{Ooto17}, we solve the problem of whether or not there exists $\xi \in \lF _q((T^{-1}))$ for which $w_n(\xi )$ and $w_n^{*}(\xi )$ are different for any $n\geq 3$.

This paper is organized as follows:
We state the main results on the Diophantine exponents $w_n$ and $w_n ^{*}$ for Laurent series over a finite field in Section \ref{sec:mainresult}.
In Section \ref{sec:pre}, we prepare some lemmas in order to prove the main results.
We collect the proofs of the main results in Section \ref{sec:mainproof}.
It is well-known that finite automatons relate to algebraic Laurent series.
In Section \ref{sec:conrem}, we give properties of the Diophantine exponents for Laurent series whose coefficients are generated by finite automatons.
We also give analogues of the main results for real and $p$-adic numbers.

\section{Main results}\label{sec:mainresult}

In this section, we state the main results about the Diophantine exponents for Laurent series over a finite field and give some problems associated to the main results.

We use the following notation throughout this paper.
We denote by $\lfloor x\rfloor $ the integer part of a real number $x$.
We use the Vinogradov notation $A\ll B$ if $|A|\leq c |B|$ for some constant $c>0$.
We write $A\asymp B$ if $A\ll B$ and $B\ll A$.
For a finite word $W$, we put $\overline{W}:=WW\cdots W\cdots $ (infinitely many concatenations of the word $W$).
An infinite word ${\bf a}=a_0a_1\cdots $ is called {\itshape ultimately periodic} if there exist a finite word $U$ and a non-empty finite word $V$ such that ${\bf a}=U\overline{V}$.
We identify a sequence $(a_n)_{n\geq 0}$ with the infinite word $a_0 a_1 \cdots a_n \cdots $.

We denote by $(\lF _q[T])[X]$ the set of all polynomials in $X$ over $\lF_q[T]$.
The {\itshape height} of a polynomial $P(X)\in (\lF _q [T])[X]$, denoted by $H(P)$, is defined to be the maximum of the absolute values of the coefficients of $P(X)$.
We denote by $(\lF _q[T])[X]_{\min }$ the set of all non-constant, irreducible, primitive polynomials in $(\lF _q[T])[X]$ whose leading coefficients are monic polynomials in $T$.
For $\alpha \in \overline{\lF _q(T)}$, there exists a unique $P(X)\in (\lF _q[T])[X]_{\min }$ such that $P(\alpha )=0$. 
We call the polynomial $P(X)$ the {\itshape minimal polynomial} of $\alpha $.
The {\itshape height} (resp.\ the {\itshape degree}, the {\itshape inseparable degree}) of $\alpha $, denoted by $H(\alpha )$ (resp.\ $\deg \alpha $, $\insep \alpha $), is defined to be the height of $P(X)$ (resp.\ the degree of $P(X)$, the inseparable degree of $P(X)$).
We now define the Diophantine exponents for Laurent series over a finite field.
Let $n\geq 1$ be an integer and $\xi $ be in $\lF _q((T^{-1}))$.
We denote by $w_n(\xi )$ (resp.\ $w_n^{*}(\xi )$) the supremum of the real numbers $w$ (resp.\ $w^{*}$) which satisfy 
\begin{equation*}
0<|P(\xi )|\leq H(P)^{-w}\quad (\text{resp.\ } 0<|\xi -\alpha |\leq H(\alpha )^{-w^{*}-1})
\end{equation*}
for infinitely many $P(X)\in (\lF _q[T])[X]$ of degree at most $n$ (resp.\ $\alpha \in \overline{\lF _q(T)}$ of degree at most $n$).
It is clear that $w_1(\xi )=w_1 ^{*}(\xi )$ for all $\xi \in \lF _q((T^{-1}))$.

Let $n\geq 1$ be an integer and let $\xi \in \lF_q((T^{-1}))$ be a Laurent series which is not algebraic of degree at most $n$.
We denote by $E_n(\xi )$ the supremum of the real numbers $w$ which satisfy 
\begin{equation*}
0<|\xi -\alpha |\leq H(\alpha )^{-w}
\end{equation*}
for infinitely many algebraic Laurent series $\alpha \in \lF _q((T^{-1}))$ of degree $n$.
It is obvious that 
\begin{equation}\label{eq:e1}
	E_1(\xi )=w_1(\xi )+1=w_1 ^{*}(\xi )+1
\end{equation}
for all irrational Laurent series $\xi \in \lF _q((T^{-1}))$.
It is also obvious that
\begin{equation}\label{eq:e12}
w_n^{*}(\xi )+1\geq \max\{E_m(\xi )\mid 1\leq m\leq n\}
\end{equation}
for all $\xi \in \lF _q((T^{-1}))$ which are not algebraic of degree at most $n$.

As in the classical continued fraction theory of real numbers, if $\xi \in \lF _q((T^{-1}))$, then we can write
\begin{equation*}
\xi = a_0+\cfrac{1}{a_1+\cfrac{1}{a_2+\cfrac{1}{\cdots }}},
\end{equation*}
where $a_0, a_n \in \lF _q[T]$ and $\deg a_n\geq 1$ for all $n\geq 1$.
We write $\xi =[a_0,a_1,a_2,\ldots ]$.
We call $a_0 $ and $a_n $ the {\itshape partial quotients} of $\xi $.

Schmidt \cite{Schmidt00} and Thakur \cite{Thakur99} studied the Diophantine exponent $w_1$ for algebraic Laurent series.
Schmidt \cite{Schmidt00} introduced a classification of algebraic Laurent series as follows:
Let $\alpha $ be in $\lF _q((T^{-1}))\setminus \lF _q(T)$.
We say that $\alpha $ is of {\itshape Class I} (resp. {\itshape Class IA}) if there exist an integer $s\geq 0$ and $R,S,T,U \in \lF _q[T]$ with $RU-ST \neq 0$ (resp. $RU-ST \in \lF _q ^{\times }$) such that
\begin{equation*}
\alpha =\frac{R \alpha ^{p^s} +S}{T \alpha ^{p^s}+U}.
\end{equation*}
For example, any quadratic Laurent series is of Class IA.
Mathan \cite{Mathan92} proved that the value of $w_1$ for a Laurent series of Class I is rational.
However, it is not known whether or not there exists an algebraic Laurent series for which the value of $w_1$ is irrational.
Let $r$ be a power of $p$.
Schmidt \cite{Schmidt00} and Thakur \cite{Thakur99} independently proved that for any rational number $1<w\leq r$, there exists an algebraic Laurent series $\xi \in \lF _q((T^{-1}))$ of degree at most $r+1$ such that $w_1(\xi )=w$.
It is well-known that $w_1(\xi )=1$ and $w_1(\eta )\geq 1$ hold for any quadratic Laurent series $\xi \in \lF _q((T^{-1}))$ and irrational Laurent series $\eta \in \lF _q((T^{-1}))$ (see e.g.\ Theorem 5.1 in \cite{Ooto17} and Lemma \ref{lem:alg}).
Therefore, we deduce that the set of all values taken by $w_1$ over the set of all Laurent series of Class IA is equal to the set of all rational numbers greater than or equal to $1$.
Chen \cite{Chen13} refined Schmidt and Thakur's result by showing that the degree of $\xi $ can be taken to be equal to $r+1$.

We partially extend Chen's result to $w_n$ and $w_n ^{*}$ for $n\geq 2$.

\begin{thm}\label{thm:mainalg}
Let $d\geq 1$ be an integer and $w>2d-1$ be a rational number.
Then there exist a strictly increasing sequence of  positive integers $(k_j)_{j\geq 1}$ and a sequence $(\xi _j)_{j\geq 1}$ such that, for any $j\geq 1$, $\xi _j$ is of Class IA of degree $p^{k_j}+1$, and
\begin{equation}\label{eq:mainlabel}
w_1(\xi _j)=w_1 ^{*}(\xi _j)=\ldots =w_d(\xi _j)=w_d ^{*}(\xi _j)=w.
\end{equation}
\end{thm}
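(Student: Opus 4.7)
The plan is to combine Chen's refinement of the Schmidt--Thakur construction with a ``large-$w_1$'' gap principle. Since the elementary chain
\[
w_n(\xi) \geq w_n^{*}(\xi) \geq w_1^{*}(\xi) = w_1(\xi)
\]
(the first inequality coming from passing from an algebraic approximation $\alpha$ to its minimal polynomial) reduces \eqref{eq:mainlabel} to producing Class IA Laurent series $\xi_j$ of degree exactly $p^{k_j}+1$ with $w_1(\xi_j)=w$ for which additionally $w_d(\xi_j)\leq w$, I would proceed in three steps.

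First, I would adapt Chen's construction \cite{Chen13} (refining Schmidt \cite{Schmidt00} and Thakur \cite{Thakur99}) to produce, for each $k$ with $p^k \geq w$, a Class IA series of exact degree $p^k+1$ satisfying $w_1=w$; taking $(k_j)_{j\geq 1}$ strictly increasing with $p^{k_1}\geq w$ yields the candidate sequence $(\xi_j)$.

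The heart of the argument is the key lemma: \emph{if $\xi \in \lF_q((T^{-1}))$ satisfies $w_1(\xi)=w>2n-1$, then $w_n(\xi)\leq w$}. Assume for contradiction that there exist $\epsilon>0$ and infinitely many irreducible polynomials $P\in(\lF_q[T])[X]$ of degree $m\leq n$ with $|P(\xi)|\leq H(P)^{-w-\epsilon}$. For the root $\alpha$ of $P$ closest to $\xi$, the condition $w>2n-1$ together with the root-separation bound derived from the discriminant of $P$ forces $|\xi-\alpha|$ to be much smaller than the distance between any two roots, so that the ultrametric identity $|P(\xi)|=|P'(\alpha)|\,|\xi-\alpha|$ holds and $|\xi-\alpha|\ll H(\alpha)^{-w-\epsilon-1+o(1)}$. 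Choosing a convergent $p/q$ of $\xi$ with $|q|\asymp H(\alpha)^{(w+\epsilon+1)/(w+1)}$, the ultrametric inequality yields $|\alpha-p/q|\ll H(\alpha)^{-w-\epsilon-1}$, whereas the Liouville-type bound $|\alpha-p/q|\gg H(\alpha)^{-1}|q|^{-n}$ (valid since $\alpha$ must be irrational, else $w_1(\xi)$ would exceed $w$) leads to $(w+\epsilon)(w+1-n)\leq n$, forcing $w+\epsilon<1$ and contradicting $w>1$.

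The main obstacles are twofold. First, the good convergents of $\xi$ may be too sparse to contain one of the prescribed size; this is overcome by working with two consecutive good convergents $p/q$ and $p'/q'$ and invoking the gap inequality $|q'|\leq|q|^{w+o(1)}$ (a consequence of $w_1(\xi)=w$), combining the resulting ``small $q$'' and ``large $q'$'' estimates into a single contradiction of the form $w(w+1-2n)\leq 0$, incompatible with $w>2n-1$. Second, one must carefully analyse inseparable or nearly-degenerate polynomials $P$, where $|P'(\alpha)|$ may be much smaller than $H(\alpha)$; this is handled by writing $P(X)=Q(X^{p^s})$ with $Q$ separable, applying the argument to $Q$ and $\xi^{p^s}$, and exploiting that Frobenius preserves $w_1$. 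A secondary technical point is verifying that Chen's construction can be arranged to give examples of Class IA of the exact degree $p^{k_j}+1$.
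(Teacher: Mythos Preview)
Your key lemma — that $w_1(\xi)=w>2n-1$ by itself forces $w_n(\xi)\le w$ — is false, and this is the genuine gap in the proposal. A counterexample for $n=2$: take $\xi$ whose continued fraction has $a_i=T$ for all $i$ except at a very sparse sequence of indices $N_1<N_2<\cdots$, where $\deg a_{N_k}$ is chosen so that $\deg q_{N_k}/\deg q_{N_k-1}\to w$. Then $\limsup_j \deg q_{j+1}/\deg q_j=w$ (so $w_1(\xi)=w$) while $\liminf_j \deg q_{j+1}/\deg q_j=1$. Between consecutive $N_k$'s the expansion is purely periodic, so the quadratic $\alpha_k:=[a_1,\ldots,a_{N_k},\overline{T}]$ agrees with $\xi$ up to index $N_{k+1}-1$; one computes $|\xi-\alpha_k|\asymp|q_{N_{k+1}-1}|^{-2}$ while $H(\alpha_k)\ll|q_{N_k}|^2$, and since $(N_{k+1}-N_k)/\deg q_{N_k}$ can be made to tend to infinity, $w_2^{*}(\xi)=+\infty$. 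Thus $w_1(\xi)=w>3$ yet $w_2(\xi)=+\infty$.

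The reason your two-convergent argument does not close is that from $|P(\xi)|\le H(P)^{-w-\epsilon}$ and $|P(p_j/q_j)|\ge|q_j|^{-m}$ you only obtain $|q_{j+1}|\le cH(P)\,|q_j|^{m-1}$, and combining this with the gap bound $|q_{j+1}|\le|q_j|^{w+o(1)}$ and with $|q_{j+1}|\ge H(P)^{(w+\epsilon)/m}$ still leaves a consistent window for $|q_j|$; no inequality of the shape $w(w+1-2n)\le 0$ follows. The preliminary step ``root separation forces $|\xi-\alpha|\ll H(\alpha)^{-w-\epsilon-1+o(1)}$'' is also unjustified for $m\ge 3$, since individual root distances for a degree-$m$ polynomial can be as small as $H(P)^{-(m-1)}$.

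The paper's proof proceeds differently. It does not quote Chen's construction and then transfer; instead it \emph{builds} the $\xi_j$ so that, in addition to $\limsup_n \deg q_{n+1}/\deg q_n=w$, one also has $\liminf_n \deg q_{n+1}/\deg q_n>2d-1$. Under this stronger liminf hypothesis the transference (Proposition~\ref{prop:conti.w_d}, via Lemma~\ref{lem:bestrational}) is valid: every convergent is already a good enough approximation to run the Liouville argument. Achieving both conditions simultaneously for a Class~IA series of exact degree $p^{k_j}+1$ requires a careful choice of the partial-quotient degrees, supplied by the arithmetic Lemma~\ref{lem:seq} together with Thakur's formula (Theorem~\ref{thm:supinf}) for the ratios $r_i$. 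So the missing idea is not a sharper gap argument but rather arranging the \emph{liminf} of the convergent ratios to exceed $2d-1$; once you impose that, your plan essentially becomes the paper's.
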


\begin{rem}
(i).\ By Lemma \ref{lem:alg}, we obtain $w\leq p^{k_1}$.

(ii).\ When $w=p^s$, where $s\geq 1$ is an integer, it is known that there exist Laurent series of Class IA which satisfy \eqref{eq:mainlabel}.
Indeed, let $a\in \lF _q[T]$ be a non-constant polynomial.
We define a Laurent series $\xi _a$ of Class IA by
\begin{equation*}
\xi _a=[a,a^{p^s},a^{p^{2s}},\ldots ].
\end{equation*}
Then it is known that $\xi _a$ is algebraic of degree $p^s+1$ and satisfies $w_1(\xi _a)=p^s$ (see Theorem 1 (1) and Remarks (1) in \cite{Thakur99}).
Therefore, it follows from Lemma \ref{lem:alg} that
\begin{equation*}
w_n(\xi _a)=w_n^{*}(\xi _a)=p^s
\end{equation*}
for all $n\geq 1$.
\end{rem}

(iii).\ By \eqref{eq:e1} and \eqref{eq:e12}, we deduce that
\begin{equation*}
	E_1(\xi_j) = \max\{E_n(\xi_j)\mid 1\leq n\leq d\}=w+1,
\end{equation*}
where the $\xi _j$'s are algebraic Laurent series as in Theorem \ref{thm:mainalg}.

It is known that the values of $w_1$ can be determined through the partial quotients of continued fraction.
In this paper, we extend this result to $w_n$ and $w_n ^{*}$ for all $n\geq 1$ for a certain class of Laurent series.
This is the key point of the proof of Theorem \ref{thm:mainalg}.

As mentioned in Section \ref{sec:intro}, it is known that $w_n(\xi )=w_n^{*}(\xi )$ for all real algebraic numbers $\xi $ and integers $n\geq 1$.
The proof of this result depends on the Schmidt Subspace Theorem which is a generalization of the Roth Theorem.
However, analogues of these theorems in positive characteristic do not hold (see Section \ref{sec:intro}).
Therefore, we address the following problem.

\begin{prob}\label{prob:normalstar}
Is it true that
\begin{equation*}
w_n(\xi )=w_n ^{*}(\xi )
\end{equation*}
for an integer $n\geq 1$ and an algebraic Laurent series $\xi $?
\end{prob}

Note that Theorem \ref{thm:mainalg} gives a partial answer to Problem \ref{prob:normalstar}.
If we remove the condition that $\xi $ is algebraic, then the answer to Problem \ref{prob:normalstar} is not true (see Theorems \ref{thm:mainconti1} and \ref{thm:mainconti2} below).

We state some corollaries of Theorem \ref{thm:mainalg}.

\begin{cor}\label{cor:classcor}
Let $n\geq 1$ be an integer.
Then the set of all values taken by $w_n$ {\normalfont (}resp. $w_n ^{*}${\normalfont )} over the set of all Laurent series of Class IA contains the set of all rational numbers greater than $2n-1$.
\end{cor}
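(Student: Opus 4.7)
The plan is to derive Corollary \ref{cor:classcor} as a direct consequence of Theorem \ref{thm:mainalg}. Given any integer $n \geq 1$ and any rational number $w > 2n-1$, I would apply Theorem \ref{thm:mainalg} with the choice $d = n$. The theorem guarantees the existence of a (strictly increasing) sequence of positive integers $(k_j)_{j\geq 1}$ and an associated sequence $(\xi_j)_{j\geq 1}$ of Laurent series of Class IA with $\deg \xi_j = p^{k_j} + 1$, such that the full chain of equalities
\begin{equation*}
w_1(\xi_j) = w_1^*(\xi_j) = \ldots = w_n(\xi_j) = w_n^*(\xi_j) = w
\end{equation*}
holds for every $j \geq 1$.

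In particular, picking for instance $j = 1$, I obtain a single Laurent series $\xi_1$ of Class IA for which $w_n(\xi_1) = w$, showing that $w$ lies in the image of $w_n$ restricted to Class IA. The same $\xi_1$ also satisfies $w_n^*(\xi_1) = w$, so $w$ lies in the image of $w_n^*$ restricted to Class IA as well. Since this works for every rational $w > 2n-1$, the claimed inclusion of the set of rationals greater than $2n-1$ into the value set of $w_n$ (respectively $w_n^*$) over Class IA follows.

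There is essentially no obstacle here: the content of the corollary is already packed into the conclusion of Theorem \ref{thm:mainalg}, and the only thing to verify is that the theorem's hypothesis on $w$ matches the range claimed in the corollary, which is immediate from the substitution $d = n$. The proof is therefore a one-line deduction and requires no further arguments beyond quoting the theorem.
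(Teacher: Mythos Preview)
Your proposal is correct and matches the paper's approach: the paper states Corollary~\ref{cor:classcor} immediately after Theorem~\ref{thm:mainalg} without giving a separate proof, precisely because the corollary is an immediate consequence of the theorem with $d=n$, exactly as you describe.
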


We address the following natural problem arising from Corollary \ref{cor:classcor}.

\begin{prob}
Let $n\geq 1$ be an integer.
Determine the set of all values taken by $w_n$ (resp. $w_n ^{*}$) over the set of all algebraic Laurent series.
\end{prob}

Since the sequence of degrees of $\xi _j$ tends to infinity under the conditions of Theorem \ref{thm:mainalg}, we obtain the following corollary.

\begin{cor}\label{cor:linind}
Let $d\geq 1$ be an integer and $w>2d-1$ be a rational number.
Then there exists a set $\{ \xi _j \mid j\geq 1 \}$ of linearly independent Laurent series of Class IA such that, for any $j\geq 1$
\begin{equation*}
w_1(\xi _j)=w_1 ^{*}(\xi _j)=\ldots =w_d(\xi _j)=w_d ^{*}(\xi _j)=w.
\end{equation*}
\end{cor}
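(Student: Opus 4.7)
The plan is to extract a linearly independent subsequence from the family $(\xi_j)_{j\geq 1}$ supplied by Theorem \ref{thm:mainalg}, exploiting the fact that $\deg \xi_j = p^{k_j}+1$ tends to infinity. I would build a subsequence $(\eta_m)_{m\geq 1}$ inductively: set $\eta_1 := \xi_1$, and having chosen $\eta_1,\ldots,\eta_m$, let $D_m := [\lF_q(T)(\eta_1,\ldots,\eta_m) : \lF_q(T)]$, which is at most $\prod_{i=1}^m \deg \eta_i$ and so is finite. Since the degrees $\deg \xi_j$ are unbounded, pick $\eta_{m+1}$ from the tail of $(\xi_j)_{j\geq 1}$ with $\deg \eta_{m+1} > D_m$; this choice forces $\eta_{m+1} \notin K_m := \lF_q(T)(\eta_1,\ldots,\eta_m)$, because any element of $K_m$ that is algebraic over $\lF_q(T)$ has degree at most $[K_m:\lF_q(T)] = D_m$.

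Linear independence over $\lF_q(T)$ then follows by a standard descent argument. Suppose $\sum_{i=1}^N c_i \eta_i = 0$ with $c_i \in \lF_q(T)$ not all zero, and let $i_0$ be the largest index for which $c_{i_0} \neq 0$. Dividing through by $c_{i_0}$ expresses $\eta_{i_0}$ as an $\lF_q(T)$-linear combination of $\eta_1,\ldots,\eta_{i_0-1}$, hence $\eta_{i_0} \in K_{i_0-1}$, contradicting the construction. Each $\eta_m$ inherits from the corresponding $\xi_j$ both the Class IA property and the full chain $w_1(\eta_m) = w_1^{*}(\eta_m) = \cdots = w_d(\eta_m) = w_d^{*}(\eta_m) = w$ by construction, so relabelling the $\eta_m$'s yields the set claimed in the corollary.

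No step presents a genuine obstacle here: the argument is a greedy selection applied to Theorem \ref{thm:mainalg}, relying only on the elementary fact that the compositum of finitely many finite algebraic extensions of $\lF_q(T)$ is again a finite algebraic extension whose degree is bounded by the product of the individual degrees. The unbounded growth of $\deg \xi_j$, which is built into Theorem \ref{thm:mainalg}, is precisely what allows us to keep jumping out of the previously generated field at every inductive step.
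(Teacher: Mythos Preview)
Your argument is correct and matches the paper's approach exactly: the paper's entire justification for this corollary is the single sentence ``Since the sequence of degrees of $\xi_j$ tends to infinity under the conditions of Theorem~\ref{thm:mainalg}, we obtain the following corollary,'' and you have simply fleshed out the standard details behind that remark.
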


We obtain the following theorem in a similar method to the proof of Theorem \ref{thm:mainalg}.

\begin{thm}\label{thm:realequal}
Let $d\geq 1$ be an integer and $w\geq 2d-1$ be a real number.
Then there exist uncountably many $\xi \in \lF _q((T^{-1}))$ such that
\begin{equation*}
w_1(\xi )=w_1 ^{*}(\xi )=\ldots =w_d(\xi )=w_d ^{*}(\xi )=w.
\end{equation*}
\end{thm}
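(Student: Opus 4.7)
The plan is to construct $\xi$ through its continued fraction expansion with carefully prescribed degrees of partial quotients. Write $\xi = [a_0, a_1, a_2, \ldots]$ with convergents $p_n/q_n$ and put $d_n := \deg a_n$, so $\deg q_n = d_1 + \cdots + d_n$ and $|\xi - p_n/q_n| = |a_{n+1}|^{-1} |q_n|^{-2}$ by the standard identities for continued fractions in $\lF_q((T^{-1}))$. Choose the $d_n$ so that $d_{n+1}/(d_1 + \cdots + d_n) \to w-1$; a concrete choice is $d_1 = 1$ and $d_{n+1} := \max\bigl(1, \lfloor (w-1)(d_1+\cdots+d_n)\rfloor\bigr)$, the integer rounding being negligible because the partial sums grow at least geometrically once $w > 1$. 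Subject only to the prescription $\deg a_n = d_n$, pick each $a_n \in \lF_q[T]$ with an arbitrary leading coefficient in $\lF_q^{\times}$ and arbitrary lower coefficients in $\lF_q$. Since infinitely many $d_n$ are positive, the collection of resulting $\xi$ is uncountable.

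For any such $\xi$, the prescribed growth together with the standard fact that convergents are the best rational approximations in the Laurent series setting yields $w_1(\xi) = w_1^{*}(\xi) = w$. Since $w_n(\xi) \geq w_n^{*}(\xi) \geq w_1^{*}(\xi) = w$ trivially, the content of the theorem is the upper bound $w_n(\xi) \leq w$ for $2 \leq n \leq d$. This is the task of the key technical lemma prepared in Section \ref{sec:pre} --- the same tool underlying the proof of Theorem \ref{thm:mainalg} --- which characterises $w_n$ and $w_n^{*}$ directly in terms of the partial-quotient degree sequence once $w$ is sufficiently large compared to $n$. The hypothesis $w \geq 2d-1$ is precisely the threshold under which this characterisation applies uniformly to every $n$ in the range $1 \leq n \leq d$.

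The main obstacle is the upper bound $w_n(\xi) \leq w$ for $n \geq 2$: one must exclude the possibility that some polynomial $P(X) \in (\lF_q[T])[X]$ of degree at most $n$ produces an approximation essentially better than what the convergents yield. The lemma forces any such $P$ to align with the convergent structure of $\xi$, after which the prescribed degree growth rules out any surplus; this is where the threshold $2d-1$ enters, as it is the exact boundary beyond which no polynomial can improve on the naive linear approximation built from a convergent. The remaining ingredients --- the lower bound for $w_1$ from the growth law, and the continuum of distinct choices for the partial quotients --- are then comparatively routine.
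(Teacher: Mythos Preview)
Your approach is essentially identical to the paper's: both construct $\xi$ via a continued fraction whose partial quotients have degrees chosen so that $\deg q_{n+1}/\deg q_n \to w$ (the paper takes $a_n = T^{\lfloor (w-1)\deg Q_{n-1}\rfloor} + \varepsilon_n$ with $\varepsilon_n \in \{0,1\}$, matching your recursion up to cosmetics), obtain uncountability from the free coefficients, and then invoke Proposition~\ref{prop:conti.w_d} (the ``key technical lemma'' you describe) to conclude. Your identification of the threshold $w \geq 2d-1$ as the exact hypothesis of that proposition is correct, and your $\max(1,\cdot)$ even handles the edge case $d=1$, $w<2$ slightly more cleanly than the paper's formula.
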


Analogues of Theorem \ref{thm:realequal} for real and $p$-adic numbers are already given in \cite{Bugeaud10, Bugeaud112}.

For $\xi \in \lF _q((T^{-1}))$, it is easily seen that $(w_n(\xi ))_{n\geq 1}$ and $(w_n^{*}(\xi ))_{n\geq 1}$ are increasing sequences with $0\leq w_n(\xi ), w_n^{*}(\xi )\leq +\infty $ for all $n\geq 1$.
Therefore, we have $w_n(\xi )=w_n ^{*}(\xi )=0$ and $w_n(\eta )=w_n^{*}(\eta )=1$ for all $n\geq 1, \xi \in \lF _q(T)$, and quadratic Laurent series $\eta \in \lF _q((T^{-1}))$ by Theorem 5.1 in \cite{Ooto17} and Lemma \ref{lem:alg}.
It is immediate that for any $n\geq 1, w_n(\xi )=w_n^{*}(\xi )=+\infty $, where $\xi =\sum _{m=1}^{\infty }T^{-m!}$.
Hence, we have the following corollary of Theorem \ref{thm:realequal}.

\begin{cor}\label{cor:dis}
For an integer $n\geq 1$, the set of all values taken by $w_n$ {\normalfont (}resp.\ $w_n^{*}${\normalfont )} contains the set $\{ 0,1\} \cup [2n-1,+\infty ]$.
Furthermore, the set of all values taken by $w_1$ {\normalfont (}resp.\ $w_1^{*}${\normalfont )} is equal to $\{ 0\}\cup [1,+\infty ]$.
\end{cor}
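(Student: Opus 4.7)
The plan is to assemble the two halves of the corollary from ingredients already available in the paper, namely Theorem~\ref{thm:realequal} together with the explicit Laurent series catalogued in the paragraph immediately preceding the corollary statement.

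For the first assertion, I would fix $n\geq 1$ and account for the three pieces of the set $\{0,1\}\cup[2n-1,+\infty]$ separately. The value $0$ is realised by any rational function $\xi\in\mathbb{F}_q(T)$, and the value $1$ is realised by any quadratic Laurent series $\eta\in\mathbb{F}_q((T^{-1}))$, since the paragraph before the corollary records $w_n(\xi)=w_n^{*}(\xi)=0$ and $w_n(\eta)=w_n^{*}(\eta)=1$ in those two cases (citing Theorem~5.1 of \cite{Ooto17} and Lemma~\ref{lem:alg}). Every finite real value $w\in[2n-1,+\infty)$ is realised by Theorem~\ref{thm:realequal}, which produces $\xi\in\mathbb{F}_q((T^{-1}))$ with $w_n(\xi)=w_n^{*}(\xi)=w$. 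Finally, the value $+\infty$ is realised by the Liouville-type series $\xi=\sum_{m=1}^{\infty}T^{-m!}$, for which a direct truncation estimate shows $w_n(\xi)=w_n^{*}(\xi)=+\infty$ for every $n\geq 1$, as already noted before the corollary.

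For the second assertion, applied to $n=1$, the first half of the corollary combined with $\{0,1\}\cup[1,+\infty]=\{0\}\cup[1,+\infty]$ yields the inclusion $\{0\}\cup[1,+\infty]\subseteq\{w_1(\xi)\mid\xi\in\mathbb{F}_q((T^{-1}))\}$, and the same equality holds with $w_1^{*}$ in place of $w_1$ because $w_1=w_1^{*}$ identically. For the reverse inclusion, it suffices to observe that $w_1(\xi)=0$ whenever $\xi\in\mathbb{F}_q(T)$ (trivially), while $w_1(\xi)\geq 1$ for every irrational $\xi\in\mathbb{F}_q((T^{-1}))$; this lower bound is the analogue of Dirichlet's theorem recalled in the paper just before Theorem~\ref{thm:mainalg} (and follows from convergents of the continued fraction expansion via Lemma~\ref{lem:alg}). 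Consequently no value in $(0,1)$ is attained by $w_1$, which gives the required equality.

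The only mildly non-routine step is checking $w_n\bigl(\sum_m T^{-m!}\bigr)=+\infty$ for all $n\geq 1$; but this follows at once by considering, for each fixed $k$, the polynomial $P_k(X)=X-S_k$ where $S_k=\sum_{m=1}^{k}T^{-m!}\in\mathbb{F}_q(T)$ after clearing denominators: the value $|P_k(\xi)|$ is of order $q^{-(k+1)!}$ while $H(P_k)$ is of order $q^{k!}$, so the ratio $-\log|P_k(\xi)|/\log H(P_k)$ tends to $+\infty$, forcing $w_1(\xi)=+\infty$, and hence $w_n(\xi)=w_n^{*}(\xi)=+\infty$ for every $n$ by monotonicity of $(w_n)$ and $(w_n^{*})$. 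No other step requires new ideas beyond quoting Theorem~\ref{thm:realequal} and the cited classical facts.
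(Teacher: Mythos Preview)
Your proof is correct and follows essentially the same approach as the paper, assembling Theorem~\ref{thm:realequal}, the explicit values for rational and quadratic series, and the Liouville-type example $\sum_{m}T^{-m!}$ exactly as in the paragraph preceding the corollary. One minor citation slip: the lower bound $w_1(\eta)\geq 1$ for irrational $\eta$ comes from the continued fraction convergents (Lemma~\ref{lem:conti.lem}\,\eqref{enum:differ} or Lemma~\ref{lem:conti.w_1}), not from Lemma~\ref{lem:alg}, which only supplies upper bounds.
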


We extend Theorems 1.1 and 1.2 in \cite{Ooto17}.

\begin{thm}\label{thm:mainconti1}
Let $d\geq 2$ be an integer and $w\geq (3d+2+\sqrt{9d^2+4d+4})/2$ be a real number.
Let $a,b\in \lF_q[T]$ be distinct non-constant polynomials.
Let $(a_{n,w})_{n\geq 1}$ be the sequence given by
\begin{equation*}
a_{n,w}=\begin{cases}
b & \text{if } n=\lfloor w^i \rfloor \text{ for some integer } i\geq 0,\\
a & \text{otherwise}.
\end{cases}
\end{equation*}
Set $\xi _w:=[0,a_{1,w},a_{2,w},\ldots ]$.
Then we have 
\begin{equation}\label{eq:mainconti1}
w_n ^{*}(\xi _w)=w-1,\quad w_n(\xi _w)=w 
\end{equation}
for all $2\leq n\leq d$.
\end{thm}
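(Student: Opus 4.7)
The plan is to generalize the strategy of Theorems 1.1 and 1.2 in \cite{Ooto17} (which handle $n=2$) to all $n$ with $2\le n\le d$. Set $m_i:=\lfloor w^i\rfloor$ and let $(p_k/q_k)_{k\ge 0}$ denote the convergents of $\xi_w$. The central approximants are the quadratic Laurent series
\begin{equation*}
\xi_i:=[0;a_{1,w},a_{2,w},\ldots,a_{m_i,w},\overline{a}].
\end{equation*}
Since the partial quotients of $\xi_w$ at positions $m_i+1,\ldots,m_{i+1}-1$ are all equal to $a$, the continued fractions of $\xi_w$ and $\xi_i$ agree up to position $m_{i+1}-1$.

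For the lower bounds, standard continued-fraction arithmetic over $\lF_q((T^{-1}))$ (together with the sparsity of the positions where the partial quotient is $b$) gives $|q_k|\asymp|a|^k$, $H(\xi_i)\asymp|q_{m_i}|^2$, and $|\xi_w-\xi_i|\asymp|q_{m_{i+1}-1}|^{-2}$. Since $m_{i+1}/m_i\to w$, this collapses to $|\xi_w-\xi_i|\asymp H(\xi_i)^{-w}$, yielding $w_n^{*}(\xi_w)\ge w-1$ for every $n\ge 2$. For the sharper inequality $w_n(\xi_w)\ge w$, let $P_i$ be the minimal polynomial of $\xi_i$, with leading coefficient $l_i$ and Galois conjugate $\xi_i'=(p_{m_i}y'+p_{m_i-1})/(q_{m_i}y'+q_{m_i-1})$, where $y'=(a-\sqrt{a^2+4})/2$. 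The key refined estimate is $|\xi_i-\xi_i'|\asymp|q_{m_i}|^{-1}|q_{m_i-1}|^{-1}$, much smaller than the generic $\asymp 1$. Combined with $|l_i|\asymp H(P_i)\asymp|q_{m_i}|^2$ and the ultrametric identity $|\xi_w-\xi_i'|\asymp|\xi_i-\xi_i'|$, the factorization $|P_i(\xi_w)|=|l_i|\cdot|\xi_w-\xi_i|\cdot|\xi_w-\xi_i'|$ gives $|P_i(\xi_w)|\asymp H(P_i)^{-w}$.

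For the upper bounds $w_n(\xi_w)\le w$ and $w_n^{*}(\xi_w)\le w-1$ under $n\le d$, I would argue by contradiction. Suppose some irreducible $P\in(\lF_q[T])[X]$ of degree at most $n$ and distinct from every $P_j$ satisfies $|P(\xi_w)|\le H(P)^{-w'}$ for some $w'>w$; reducible $P$ reduce to this case by splitting off irreducible factors and iterating. Choose the unique $i$ with $H(\xi_i)\le H(P)<H(\xi_{i+1})$. Combining $|\Res(P,P_j)|\ge 1$ for $j\in\{i,i+1\}$ with the Liouville-type bound
\begin{equation*}
|\Res(P,P_j)|\ll H(P)^{2}|P_j(\xi_w)|^{\deg P}+H(P_j)^{\deg P}|P(\xi_w)|^{2},
\end{equation*}
and plugging in $|P_j(\xi_w)|\asymp H(P_j)^{-w}$, $H(P_{i+1})\asymp H(P_i)^w$, and $\deg P\le n\le d$, one obtains a pair of inequalities on $H(P)$ and $H(P_i)$ which are consistent with $w'>w$ only if $w^2-(3d+2)w+2d<0$. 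The hypothesis $w\ge(3d+2+\sqrt{9d^2+4d+4})/2$ is precisely the negation of this (the larger root of the quadratic), forcing the desired contradiction. The bound $w_n^{*}(\xi_w)\le w-1$ follows the same template with an algebraic $\alpha$ replacing $P$ and its minimal polynomial playing the role of $P$.

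The principal obstacle is this upper bound argument for $n\ge 3$, since the Schmidt Subspace Theorem (which settles the real analogue) has no counterpart in positive characteristic; everything must be done through resultants, carefully tracking the dependence on $\deg P\le n$. The quadratic inequality in $w$ and the specific numerical constant in the hypothesis emerge from optimizing the resultant estimates over $i$ and $\deg P$. A further delicate point is the reduction when $P$ is reducible over $\lF_q(T)$ and may share factors with several $P_j$'s simultaneously, which must not destroy the sharp exponents obtained in the lower-bound step.
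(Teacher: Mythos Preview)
Your construction and lower-bound estimates match the paper's exactly: the $\xi_i$ are the paper's $\xi_{w,j}$, and the asymptotics $H(\xi_i)\asymp|q_{m_i}|^2$, $|\xi_w-\xi_i|\asymp H(\xi_i)^{-w}$, $|\xi_i-\xi_i'|\asymp H(\xi_i)^{-1}$ are precisely what the paper imports from \cite{Ooto17}. You also correctly land on the governing inequality $w^2-(3d+2)w+2d\ge 0$.

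The upper-bound step, however, has a concrete error. The displayed resultant bound is not valid: from $\Res(P,P_j)=l_j^{\deg P}P(\alpha_j)P(\alpha_j')$ together with the ultrametric transfer $|P(\alpha)|\le\max\bigl(|P(\xi_w)|,\,cH(P)\,|\alpha-\xi_w|\bigr)$ one obtains a \emph{product of two maxima}, not a sum, and neither the exponents on $H(P)$ nor those on $H(P_j)$ come out as you wrote. The paper bypasses resultants and packages the upper bound into Lemma~\ref{lem:bestquad}: for irreducible $P\ne P_j$ of degree at most $d$, one chooses $j$ with $H(\alpha_j)\approx H(P)^{2\theta/\delta}$ (note the scaling exponent $2\theta/\delta$, not $1$ as in your index convention); then the Liouville inequality $|P(\alpha_j)|\ge H(\alpha_j)^{-d}H(P)^{-1}$ of Lemma~\ref{lem:Lio.inequ1} dominates the transfer term $|P(\xi_w)-P(\alpha_j)|\ll H(P)\,|\xi_w-\alpha_j|$, so that $|P(\xi_w)|=|P(\alpha_j)|\ge H(P)^{-1-2d\theta/\delta}$. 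With $\theta=w$ and $\delta=w-d$ this yields $w_d(\xi_w)\le\max\bigl(w,\,1+2dw/(w-d)\bigr)=w$ precisely under $w^2-(3d+2)w+2d\ge0$; the companion bound for $w_d^*$ uses Lemma~\ref{lem:Lio.inequ2} in the same way. Two smaller corrections: the Schmidt Subspace Theorem plays no role even in the real analogue (Theorem~\ref{thm:rmainconti1} is proved by the identical Liouville-transfer mechanism, Lemma~\ref{lem:bestquadr}); and reducible $P$ are handled in one stroke by $w_n=\tilde w_n$ (Lemma~\ref{lem:weak}), not by iterating over factors.
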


\begin{thm}\label{thm:mainconti2}
Let $d\geq 2$ be an integer, $w\geq 121d^2$ be a real number, and $a,b,c\in \lF_q[T]$ be distinct non-constant polynomials.
Let $0<\eta <\sqrt{w}/d$ be a positive number and put $m_i:=\lfloor (\lfloor w^{i+1}\rfloor -\lfloor w^i-1\rfloor )/\lfloor \eta w^i\rfloor \rfloor $ for all $i\geq 1$.
Let $(a_{n,w,\eta })_{n\geq 1}$ be the sequence given by
\begin{equation*}
a_{n,w,\eta }=\begin{cases}
b & \text{if } n=\lfloor w^i \rfloor \text{ for some integer } i\geq 0,\\
c & \parbox{250pt}{$\text{if } n \neq \lfloor w^i\rfloor \text{ for all integers }i\geq 0 \text{ and } n=\lfloor w^j\rfloor +m \lfloor \eta w^j\rfloor \text{ for some integers }1\leq m\leq m_j, j\geq 1,$}\\
a & \text{otherwise}.
\end{cases}
\end{equation*}
Set $\xi _{w,\eta }:=[0,a_{1,w,\eta },a_{2,w,\eta },\ldots ]$.
Then we have
\begin{equation}\label{eq:mainconti2}
w_n ^{*}(\xi _{w,\eta })=\frac{2 w-2-\eta }{2+\eta },\quad w_n(\xi _{w,\eta })=\frac{2 w-\eta }{2+\eta }
\end{equation}
for all $2\leq n\leq d$.
Hence, we have
\begin{equation*}
w_n(\xi _{w,\eta })-w_n ^{*}(\xi _{w,\eta })=\frac{2}{2+\eta } 
\end{equation*}
for all $2\leq n\leq d$.
\end{thm}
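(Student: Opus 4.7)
The plan is to extend the argument of \cite{Ooto17}, which establishes the $n = 2$ case, uniformly to every integer $n$ with $2 \leq n \leq d$. Setting $n_k := \lfloor w^k \rfloor$ and $\ell_k := \lfloor \eta w^k \rfloor$, the continued fraction of $\xi_{w,\eta}$ is quasi-periodic: a partial quotient "$b$" sits at each position $n_k$, followed by approximately $(w-1)/\eta$ repetitions of the length-$\ell_k$ block $a^{\ell_k - 1} c$ before the next "$b$" at position $n_{k+1}$. Writing $(p_m/q_m)_{m \geq 0}$ for the convergents of $\xi_{w,\eta}$, we have $\log|q_m| \asymp m$.

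The lower bounds in \eqref{eq:mainconti2} follow for every $n \geq 2$ from the $n = 2$ case in \cite{Ooto17}. That argument introduces, for each large $k$, the quadratic Laurent series
\begin{equation*}
\alpha_k := [0, a_{1,w,\eta}, \ldots, a_{n_k,w,\eta}, \overline{a, a, \ldots, a, c}]
\end{equation*}
with purely periodic tail $a^{\ell_k - 1} c$ of length $\ell_k$, and obtains $\log H(\alpha_k) \asymp 2 n_k + \ell_k = (2 + \eta) w^k$ by transporting this tail through the M\"obius map given by the convergent matrix of length $n_k$, together with $|\xi_{w,\eta} - \alpha_k| \asymp |q_{n_{k+1}}|^{-2}$ from the agreement of the two continued fractions through position $n_{k+1} - 1$. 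This yields $w_2^{*}(\xi_{w,\eta}) \geq (2w - 2 - \eta)/(2 + \eta)$. For $w_2(\xi_{w,\eta})$, one evaluates the minimal polynomial $\pi_k$ of $\alpha_k$ at $\xi_{w,\eta}$ through $|\pi_k(\xi_{w,\eta})| \asymp H(\alpha_k) \cdot |\xi_{w,\eta} - \alpha_k| \cdot |\alpha_k - \alpha_k'|$; the estimate $|\alpha_k - \alpha_k'| \asymp |q_{n_k}|^{-2}$, coming from Galois's reversed-period identity for the conjugate of a purely periodic continued fraction pushed through the M\"obius transformation, introduces an extra factor $|q_{\ell_k}|^{-1}$ in $|\pi_k(\xi_{w,\eta})|$, and so the additional exponent $\eta/(2 + \eta)$, giving $w_2(\xi_{w,\eta}) \geq (2w - \eta)/(2 + \eta)$. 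Because $\alpha_k$ has degree $2 \leq n$, both inequalities propagate to all $n$ in the stated range.

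The principal task is therefore the matching upper bounds $w_n^{*}(\xi_{w,\eta}) \leq (2w - 2 - \eta)/(2 + \eta)$ and $w_n(\xi_{w,\eta}) \leq (2w - \eta)/(2 + \eta)$ uniformly for $2 \leq n \leq d$. Following the strategy of \cite{Ooto17}, I would argue that any $\alpha \in \overline{\lF_q(T)}$ of degree at most $d$ approximating $\xi_{w,\eta}$ strictly better than claimed must share enough initial partial quotients with $\xi_{w,\eta}$ that some convergent $p_N/q_N$ of $\xi_{w,\eta}$ is extremely close to $\alpha$; a Liouville-type inequality of the form $|\alpha - p_N/q_N| \gg |q_N|^{-d} H(\alpha)^{-1}$ then bounds $|q_N|$ above in terms of $H(\alpha)$, and the sparse placement of the distinguished positions $n_k$ restricts $N$ to lie near one of the truncations $n_k + t \ell_k$ underlying the $\alpha_k$. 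The hardest step will be making this Liouville argument uniform in $n \leq d$: one must preclude any saving that a higher-degree $\alpha$ might obtain either by clustering several roots close to $\xi_{w,\eta}$ or by being inseparable over $\lF_q(T)$, and this is precisely where the stringent hypotheses $w \geq 121 d^2$ and $\eta < \sqrt{w}/d$ supply the slack needed to match the exponents witnessed by the quadratic examples $\alpha_k$. The upper bound on $w_n(\xi_{w,\eta})$ then follows by applying the polynomial form of the Liouville inequality to $|P(\xi_{w,\eta})|$ and reducing to approximation by the root of $P$ closest to $\xi_{w,\eta}$.
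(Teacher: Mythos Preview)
Your lower-bound argument via the quadratic approximations $\alpha_k$ matches the paper: the four asymptotics
\[
\lim_j \frac{-\log|\xi_{w,\eta}-\alpha_j|}{\log H(\alpha_j)}=\frac{2w}{2+\eta},\qquad
\lim_j \frac{-\log|\alpha_j-\alpha_j'|}{\log H(\alpha_j)}=\frac{2}{2+\eta},\qquad
\limsup_j \frac{\log H(\alpha_{j+1})}{\log H(\alpha_j)}\le w
\]
are lifted from the proof of Theorem~1.2 in \cite{Ooto17}, and together with the minimal-polynomial identity $|P_j(\xi)|\asymp H(\alpha_j)\,|\xi-\alpha_j|\,|\alpha_j-\alpha_j'|$ they give both $w_n^*\ge\frac{2w-2-\eta}{2+\eta}$ and $w_n\ge\frac{2w-\eta}{2+\eta}$ for every $n\ge 2$.

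The gap is in your upper-bound mechanism. You pivot on a \emph{rational} convergent $p_N/q_N$ and appeal to $|\alpha-p_N/q_N|\gg|q_N|^{-d}H(\alpha)^{-1}$. But all partial quotients of $\xi_{w,\eta}$ have bounded degree, so $|\xi_{w,\eta}-p_N/q_N|\asymp|q_N|^{-2}$; to make the ultrametric triangle give $|\xi_{w,\eta}-\alpha|=|\alpha-p_N/q_N|$ you would need $|q_N|^{-d}H(\alpha)^{-1}>|q_N|^{-2}$, i.e.\ $|q_N|^{d-2}<H(\alpha)^{-1}$, which is impossible for $d\ge 2$. If instead you pick $N$ so large that $|\xi_{w,\eta}-p_N/q_N|<|\xi_{w,\eta}-\alpha|$, then $|\alpha-p_N/q_N|=|\xi_{w,\eta}-\alpha|$ and Liouville against $p_N/q_N$ yields only $|\xi_{w,\eta}-\alpha|\ge|q_N|^{-d}H(\alpha)^{-1}$ with $|q_N|$ unbounded, hence no information. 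Rational pivots are too coarse here; no amount of bookkeeping about ``distinguished positions $n_k$'' rescues this.

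The paper instead pivots on the \emph{quadratic} $\alpha_j$ themselves (this is the content of Lemma~\ref{lem:bestquad}). Given $\alpha$ of degree $\le d$, one chooses $j_0$ with $H(\alpha)<H(\alpha_{j_0})^{(\delta-\iota)/2}$, where $\delta=\tfrac{2w}{2+\eta}-d$; the degree-$2$ versus degree-$\le d$ Liouville inequality gives $|\alpha-\alpha_{j_0}|\ge H(\alpha)^{-2}H(\alpha_{j_0})^{-d}>H(\alpha_{j_0})^{-d-\delta+\iota}\ge|\xi_{w,\eta}-\alpha_{j_0}|$, and then $|\xi_{w,\eta}-\alpha|=|\alpha-\alpha_{j_0}|\ge H(\alpha)^{-2-2d\theta/\delta}$. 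The condition $2d\theta\le(d-2+\delta)\delta$ (with $\theta=w$), which is exactly what $w\ge 121d^2$ and $\eta<\sqrt w/d$ guarantee, forces $2+2d\theta/\delta\le d+\delta$, so this bound meets the lower bound and $w_n^*$ is pinned. The upper bound on $w_n$ uses the same pivot with the polynomial Liouville inequality $|P(\alpha_{j_1})|\ge H(\alpha_{j_1})^{-d}H(P)^{-1}$ and the estimate $|P(\xi_{w,\eta})-P(\alpha_{j_1})|\ll H(P)\,|\xi_{w,\eta}-\alpha_{j_1}|$, together with $\limsup_j\frac{-\log|\alpha_j-\alpha_j'|}{\log H(\alpha_j)}=\frac{2}{2+\eta}=:\chi$, giving $w_n\le w_n^*+\chi$. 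Replace your rational-convergent step by this quadratic-pivot argument.
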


Theorems \ref{thm:mainconti1} and \ref{thm:mainconti2} imply that for each $n\geq 2$, we can construct explicitly Laurent series $\xi $ for which $w_n(\xi )$ and $w_n ^{*}(\xi )$ are different.
The general strategies of the proof of Theorems \ref{thm:mainconti1} and \ref{thm:mainconti2} are same as that of Theorems 1.1 and 1.2 in \cite{Ooto17}.
The key ingredient is that for $n\geq 3$, if $\xi \in \lF _q((T^{-1}))$ has a dense (in a suitable sense) sequence of very good quadratic approximations, then we can determine $w_n(\xi )$ and $w_n^{*}(\xi )$.

The following corollary is immediate from Theorems \ref{thm:realequal}, \ref{thm:mainconti1}, and \ref{thm:mainconti2}.

\begin{cor}\label{cor:minusset}
Let $d\geq 2$ be an integer and $\delta $ be in the closed interval $[0,1]$.
Then there exist uncountably many $\xi \in \lF _q((T^{-1}))$ such that $w_n(\xi )-w_n ^{*}(\xi )=\delta $ for all $2\leq n\leq d$.
In particular, the set of all values taken by $w_d-w_d ^{*}$ contains the closed interval $[0,1]$.
\end{cor}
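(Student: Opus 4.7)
The plan is to split into three cases $\delta = 0$, $\delta \in (0,1)$, and $\delta = 1$, invoking Theorems \ref{thm:realequal}, \ref{thm:mainconti2}, and \ref{thm:mainconti1} respectively with appropriately chosen parameters. In each case I will check that the indicated theorem produces uncountably many distinct $\xi$, all satisfying $w_n(\xi) - w_n^{*}(\xi) = \delta$ for every $2 \le n \le d$.

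For $\delta = 0$, fix any real $w \ge 2d-1$ and apply Theorem \ref{thm:realequal}; it directly provides uncountably many $\xi$ with $w_n(\xi) = w_n^{*}(\xi) = w$ for all $1 \le n \le d$, so the difference vanishes on the required range of $n$. For $\delta = 1$, apply Theorem \ref{thm:mainconti1}: for every real $w \ge (3d+2+\sqrt{9d^2+4d+4})/2$, the Laurent series $\xi_w$ satisfies $w_n(\xi_w) - w_n^{*}(\xi_w) = w - (w-1) = 1$ for all $2 \le n \le d$. Since $w_d(\xi_w) = w$ is a functional of $\xi_w$, distinct values of $w$ yield distinct $\xi_w$, producing an uncountable family.

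For $\delta \in (0,1)$ I would apply Theorem \ref{thm:mainconti2}. Solving $2/(2+\eta) = \delta$ gives the unique $\eta = \eta(\delta) := 2(1-\delta)/\delta$, which lies in $(0,\infty)$. For any real $w$ satisfying both $w \ge 121 d^2$ and $w > d^2 \eta(\delta)^2$, the hypotheses of Theorem \ref{thm:mainconti2} are met, and the conclusion gives $w_n(\xi_{w,\eta}) - w_n^{*}(\xi_{w,\eta}) = 2/(2+\eta) = \delta$ for all $2 \le n \le d$. Since $w_d(\xi_{w,\eta}) = (2w-\eta)/(2+\eta)$ is strictly increasing in $w$ for fixed $\eta$, distinct admissible $w$ yield distinct $\xi_{w,\eta}$, and the admissible range for $w$ is a half-line, so this construction produces uncountably many examples. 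The ``in particular'' statement follows immediately by specialising to $n = d$.

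I do not anticipate any genuine difficulty here; the entire substance is packed into the three cited theorems, and what remains is only to check that the parameter ranges in each theorem are non-empty and allow an uncountable choice of $w$. The only mild bookkeeping is the joint constraint $0 < \eta < \sqrt{w}/d$ together with $w \ge 121 d^2$ in the case $\delta \in (0,1)$, which is handled above by taking $w$ sufficiently large relative to $\eta(\delta)$; the degeneracies at the endpoints $\delta \in \{0,1\}$ (where $\eta(\delta)$ is not in $(0,\infty)$) are precisely why the two separate theorems are needed to cover them.
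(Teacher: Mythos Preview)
Your proposal is correct and matches the paper's own approach: the paper simply states that the corollary is immediate from Theorems \ref{thm:realequal}, \ref{thm:mainconti1}, and \ref{thm:mainconti2}, and you have supplied exactly the case split and parameter choices that make this explicit. Your verification of uncountability by varying $w$ (and reading it back from $w_d(\xi)$) is a clean way to certify that the families are genuinely uncountable in the $\delta = 1$ and $\delta \in (0,1)$ cases.
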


Note that it is already known that the set of all values taken by $w_2-w_2^{*}$ is the closed interval $[0,1]$ in \cite{Ooto17}.

In the last part of this section, we mention a problem associated to Corollary \ref{cor:dis} and \ref{cor:minusset}.

\begin{prob}
Let $n\geq 1$ be an integer.
Determine the set of all values taken by $w_n$ (resp.\ $w_n^{*}, w_n-w_n^{*}$).
\end{prob}

\section{Preliminaries}\label{sec:pre}

Let $\xi $ be in $\lF _q((T^{-1}))$ and $n\geq 1$ be an integer.
We denote by $\tilde{w}_n(\xi )$ the supremum of the real numbers $w$ which satisfy
\begin{equation*}
0<|P(\xi )|\leq H(P)^{-w}
\end{equation*}
for infinitely many $P(X) \in (\lF _q[T])[X]_{\min }$ of degree at most $n$.

\begin{lem}\label{lem:weak}
Let $n\geq 1$ be an integer and $\xi $ be in $\lF _q((T^{-1}))$.
Then we have
\begin{equation*}
w_n (\xi )=\tilde{w}_n (\xi ).
\end{equation*}
\end{lem}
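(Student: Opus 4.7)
The inequality $\tilde{w}_n(\xi)\le w_n(\xi)$ is immediate from the definitions, since every element of $(\lF_q[T])[X]_{\min}$ of degree at most $n$ qualifies as a test polynomial in the definition of $w_n(\xi)$. For the reverse inequality my plan is to argue by contradiction, using the fact that $H$ is the Gauss norm attached to the non-Archimedean absolute value $|\cdot|$ on $\lF_q((T^{-1}))$; consequently one has the exact multiplicativity $H(PQ)=H(P)H(Q)$. This lets me factor any nonzero $P\in(\lF_q[T])[X]$ as $P=u\,c\prod_iP_i^{e_i}$ with $u\in\lF_q^\times$, $c\in\lF_q[T]$ the content, and distinct $P_i\in(\lF_q[T])[X]_{\min}$, together with the two matching identities
\begin{equation*}
H(P)=|c|\prod_iH(P_i)^{e_i},\qquad |P(\xi)|=|c|\prod_i|P_i(\xi)|^{e_i}.
\end{equation*}

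Next I would assume $\tilde{w}_n(\xi)<w_n(\xi)$ and fix $\tilde{w}_n(\xi)<w'<w''<w_n(\xi)$. By definition of $\tilde{w}_n$, only finitely many $Q\in(\lF_q[T])[X]_{\min}$ of degree at most $n$ satisfy $|Q(\xi)|\le H(Q)^{-w'}$; absorbing the exceptions into the constant, I obtain some $0<C\le 1$ with
\begin{equation*}
|Q(\xi)|\ge C\,H(Q)^{-w'}
\end{equation*}
for every $Q\in(\lF_q[T])[X]_{\min}$ of degree at most $n$ with $Q(\xi)\ne 0$. On the other hand, the definition of $w_n$ yields infinitely many $P\in(\lF_q[T])[X]$ of degree at most $n$ with $P(\xi)\ne 0$, $|P(\xi)|\le H(P)^{-w''}$, and necessarily $H(P)\to\infty$.

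For each such $P$, I factor as above; each $P_i$ is a minimal polynomial of degree at most $n$ with $P_i(\xi)\ne 0$ (otherwise $P(\xi)=0$), so the lower bound applies to every factor. Combining this with $\sum_ie_i\le\deg_XP\le n$ gives
\begin{equation*}
|P(\xi)|\cdot H(P)^{w''}\ge C^{\,n}\,|c|^{1+w''}\Bigl(\prod_iH(P_i)^{e_i}\Bigr)^{w''-w'}.
\end{equation*}
Both $|c|\ge 1$ and $\prod_iH(P_i)^{e_i}\ge 1$, while their product $H(P)$ tends to infinity, so at least one of the two is unbounded along the sequence; since $1+w''>0$ and $w''-w'>0$, the right side tends to $+\infty$, contradicting $|P(\xi)|\le H(P)^{-w''}$. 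The main technical point I anticipate is simply being careful when $\xi$ itself is algebraic of degree at most $n$: its minimal polynomial $P^*$ must be excluded from the uniform lower bound, but since every chosen $P$ satisfies $P(\xi)\ne 0$ we have $P^*\nmid P$, so $P^*$ never appears among the $P_i$ and the argument goes through unchanged.
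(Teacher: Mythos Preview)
Your argument is correct. The paper does not give its own proof of this lemma but simply refers to Lemma~5.3 in \cite{Ooto17}, so there is nothing here to compare against directly; your approach---using the exact multiplicativity $H(PQ)=H(P)H(Q)$ of the non-Archimedean height to reduce an arbitrary test polynomial to its irreducible primitive factors---is the standard one and is almost certainly what the cited reference does as well.

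One small remark: you assert $1+w''>0$ without comment. This is harmless, since in fact $w_n(\xi)\ge 0$ for every $\xi\in\lF_q((T^{-1}))$ (indeed $w_1(\xi)\ge 1$ when $\xi\notin\lF_q(T)$, and $w_n(\xi)=0$ when $\xi\in\lF_q(T)$), so $w''$ can always be chosen positive. If you prefer not to invoke this, you can replace the ``one of the two factors is unbounded'' step by the cleaner bound
\[
|c|^{1+w''}\Bigl(\prod_i H(P_i)^{e_i}\Bigr)^{w''-w'}\ \ge\ \bigl(|c|\prod_i H(P_i)^{e_i}\bigr)^{\min(1+w'',\,w''-w')}\ =\ H(P)^{\min(1+w'',\,w''-w')},
\]
valid because both base factors are $\ge 1$; the right-hand side then visibly diverges once the exponent is positive.
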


\begin{proof}
See Lemma 5.3 in \cite{Ooto17}.
\end{proof}

\begin{lem}\label{lem:poly}
Let $\alpha ,\beta $ be in $\lF _q ((T^{-1}))$ and $P(X) \in (\lF _q[T])[X]$ be a non-constant polynomial of degree $d$.
Let $C\geq 0$ be a real number.
Assume that $|\alpha -\beta |\leq C$.
Then there exists a positive constant $C_1(\alpha ,C,d)$, depending only on $\alpha ,C,$ and $d$, such that
\begin{equation}\label{eq:poly.differ}
|P(\alpha )-P(\beta )|\leq C_1(\alpha ,C,d)|\alpha -\beta |H(P).
\end{equation}
\end{lem}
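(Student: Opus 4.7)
The plan is to reduce the estimate to the standard algebraic identity
$$\alpha^i - \beta^i = (\alpha - \beta)(\alpha^{i-1} + \alpha^{i-2}\beta + \cdots + \beta^{i-1})$$
and then exploit the ultrametric nature of the absolute value on $\lF_q((T^{-1}))$. Writing $P(X) = \sum_{i=0}^{d} a_i X^i$ with $a_i \in \lF_q[T]$ and $|a_i| \leq H(P)$, the identity above gives the factorization
$$P(\alpha) - P(\beta) = (\alpha - \beta) \sum_{i=1}^{d} a_i \sum_{j=0}^{i-1} \alpha^{j}\beta^{i-1-j},$$
so the task reduces to bounding the absolute value of the inner double sum independently of $P$, up to an overall factor of $H(P)$.

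From the hypothesis $|\alpha - \beta| \leq C$ and the non-Archimedean triangle inequality one has $|\beta| \leq \max(|\alpha|, C)$. Setting $M := \max(1, |\alpha|, C)$, where the $1$ is included so that a single exponent works uniformly for small $|\alpha|$ and $C$, I would then observe that $|\alpha^{j}\beta^{i-1-j}| \leq M^{i-1}$ for all admissible $i, j$. Applying the ultrametric inequality to the inner sum over $j$ and then to the outer sum over $i$, and using $|a_i| \leq H(P)$, yields
$$\left|\sum_{i=1}^{d} a_i \sum_{j=0}^{i-1} \alpha^{j}\beta^{i-1-j}\right| \leq H(P)\cdot M^{d-1}.$$
Multiplying through by $|\alpha - \beta|$ gives \eqref{eq:poly.differ} with $C_1(\alpha, C, d) := \max(1, |\alpha|, C)^{d-1}$, a quantity depending only on $\alpha$, $C$, and $d$, as required.

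There is no substantive obstacle: the argument is a one-line algebraic identity followed by repeated use of the ultrametric inequality. The only bookkeeping point worth flagging is the inclusion of $1$ in the definition of $M$, which avoids splitting into cases according to whether $|\alpha|$ and $C$ are less than or greater than $1$ and produces a single clean constant with exponent $d-1$ in the final bound.
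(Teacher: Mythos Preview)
Your proof is correct and follows essentially the same route as the paper: both use the factorization $\alpha^i-\beta^i=(\alpha-\beta)\sum_{j=0}^{i-1}\alpha^j\beta^{i-1-j}$ together with the ultrametric inequality, and the resulting constants coincide (the paper writes it as $\max_{1\leq k\leq d}\max(C,|\alpha|)^{k-1}$, which equals your $\max(1,|\alpha|,C)^{d-1}$).
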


\begin{proof}
It is easily seen that
\begin{equation*}
|P(\alpha )-P(\beta )|\leq H(P)\max _{1\leq i\leq d} |\alpha ^i-\beta ^i|.
\end{equation*}
By the assumption, we have $\max (C,|\alpha |)=\max (C,|\beta |)$.
For any $1\leq i\leq d$, we obtain
\begin{align*}
|\alpha ^i-\beta ^i| & = |\alpha -\beta |\left| \sum_{j=0}^{i-1}\alpha ^j \beta ^{i-1-j} \right|
\leq |\alpha -\beta |\max _{0\leq j\leq i-1} |\alpha ^j \beta ^{i-1-j}| \\
& \leq |\alpha -\beta |\max _{1\leq k\leq d}(C,|\alpha |)^{k-1}.
\end{align*}
Hence, we have \eqref{eq:poly.differ}.
\end{proof}

\begin{lem}\label{lem:differ}
Let $n\geq 1$ be an integer and $\xi $ be in $\lF _q((T^{-1}))$.
Then we have
\begin{equation*}
w_n ^{*}(\xi )\leq w_n(\xi ).
\end{equation*}
\end{lem}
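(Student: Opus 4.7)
The plan is to convert any algebraic approximation of $\xi $ into a polynomial approximation via its minimal polynomial, and then invoke Lemma \ref{lem:weak} to pass from $\tilde{w}_n$ back to $w_n$. Concretely, I would assume $w_n^{*}(\xi )>0$ (otherwise the inequality is trivial since $w_n(\xi )\geq 0$), fix any rational $w<w_n^{*}(\xi )$, and select the infinite family of $\alpha \in \overline{\lF _q(T)}$ with $\deg \alpha \leq n$ satisfying $0<|\xi -\alpha |\leq H(\alpha )^{-w-1}$. For each such $\alpha$, let $P\in (\lF _q[T])[X]_{\min }$ be its minimal polynomial, so that $\deg P=\deg \alpha \leq n$, $H(P)=H(\alpha )$, and $P(\alpha )=0$.

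Next I would apply Lemma \ref{lem:poly} to the pair $(\xi ,\alpha )$. Because $|\xi -\alpha |\to 0$ along the family, we may discard the finitely many $\alpha $ with $|\xi -\alpha |>1$ and take $C=1$ in the lemma, producing a constant $C_1=C_1(\xi ,1,n)$ such that
\begin{equation*}
|P(\xi )|=|P(\xi )-P(\alpha )|\leq C_1|\xi -\alpha |H(P)\leq C_1H(P)^{-w}.
\end{equation*}
For any $w'<w$ the factor $C_1$ is absorbed once $H(P)$ is large enough, giving $|P(\xi )|\leq H(P)^{-w'}$ for infinitely many $P$ in the family.

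The one subtle point—my main sanity check rather than a serious obstacle—is ensuring $P(\xi )\neq 0$ for infinitely many of these $P$. If $\xi $ is transcendental over $\lF _q(T)$ this is automatic. If $\xi $ is algebraic, then $P(\xi )=0$ together with the irreducibility of $P$ forces $P$ to agree with the minimal polynomial of $\xi $ in $(\lF _q[T])[X]_{\min }$, and by uniqueness of that object this can happen for at most one $P$ in the family. So after removing at most one element, every $P$ in the family satisfies $0<|P(\xi )|\leq H(P)^{-w'}$.

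Consequently $\tilde{w}_n(\xi )\geq w'$ for every $w'<w<w_n^{*}(\xi )$, hence $\tilde{w}_n(\xi )\geq w_n^{*}(\xi )$. An application of Lemma \ref{lem:weak} converts this to $w_n(\xi )=\tilde{w}_n(\xi )\geq w_n^{*}(\xi )$, which is the desired inequality. No deep tool is needed; the argument is the standard one for relating $w_n$ and $w_n^{*}$, with the characteristic-$p$ setting handled cleanly by Lemma \ref{lem:poly} and the passage through $\tilde{w}_n$.
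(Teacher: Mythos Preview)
Your argument is correct and is the standard proof of this inequality. The paper itself does not prove the lemma at all; it simply cites Proposition~5.6 in \cite{Ooto17}. So there is no ``paper's own proof'' to compare against beyond that reference, and your self-contained derivation via Lemma~\ref{lem:poly} and Lemma~\ref{lem:weak} is exactly the expected route.

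One cosmetic point: Lemma~\ref{lem:poly} as stated produces a constant $C_1(\xi ,1,d)$ depending on the \emph{exact} degree $d$ of $P$, whereas you invoke a single constant $C_1(\xi ,1,n)$ for all $P$ of degree at most $n$. This is harmless---either observe from the proof of Lemma~\ref{lem:poly} that $C_1(\xi ,1,d)=\max (1,|\xi |)^{d-1}$ is nondecreasing in $d$, or simply replace your constant by $\max _{1\leq m\leq n}C_1(\xi ,1,m)$---but it is worth saying explicitly.
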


\begin{proof}
See Proposition 5.6 in \cite{Ooto17}.
\end{proof}

\begin{lem}\label{lem:bestrational}
Let $\xi $ be in $\lF _q ((T^{-1})), d\geq 1$ be an integer, and $\theta ,\rho ,\delta $ be positive numbers.
Assume that there exists a sequence $(p_j/q_j)_{j\geq 1}$ with $p_j ,q_j \in \lF _q[T], q_j \neq 0, \gcd(p_j, q_j)=1$ for any $j\geq 1$ such that $(|q_j|)_{j\geq 1}$ is a divergent increasing sequence, and
\begin{gather*}
\limsup _{j\rightarrow \infty } \frac{\log |q_{j+1}|}{\log |q_j|} \leq \theta ,\\
d+\delta \leq \liminf _{j\rightarrow \infty }\frac{-\log |\xi -p_j/q_j|}{\log |q_j|}, \quad \limsup _{j\rightarrow \infty }\frac{-\log |\xi -p_j/q_j|}{\log |q_j|}\leq d+\rho .
\end{gather*}
Then we have for all $1\leq n\leq d$
\begin{equation}\label{eq:b.r.a.inequ}
d-1+\delta \leq w_n ^{*}(\xi )\leq w_n(\xi )\leq \max \left( d-1+\rho , \frac{d\theta }{\delta } \right) .
\end{equation}
\end{lem}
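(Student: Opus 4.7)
The plan is to separately establish the lower bound $w_n^{*}(\xi) \geq d - 1 + \delta$, invoke Lemma \ref{lem:differ} for the middle inequality, and prove the upper bound $w_n(\xi) \leq \max(d - 1 + \rho, d\theta/\delta)$.

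For the lower bound I would use the $p_j/q_j$ themselves as linear algebraic approximations. Since $p_j/q_j \to \xi$, we have $H(p_j/q_j)=\max(|p_j|,|q_j|)\ll |q_j|$, and the liminf hypothesis yields $|\xi - p_j/q_j| \leq |q_j|^{-(d+\delta-\epsilon)}\ll H(p_j/q_j)^{-(d+\delta-\epsilon)}$ for every $\epsilon > 0$ and every sufficiently large $j$; pairwise distinctness of the approximations is automatic from $\gcd(p_j,q_j)=1$ and the strict growth of $|q_j|$. This gives $w_1^{*}(\xi) \geq d - 1 + \delta - \epsilon$, and letting $\epsilon \to 0$ together with the monotonicity $w_n^{*} \geq w_1^{*}$ finishes this part.

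For the upper bound I apply Lemma \ref{lem:weak} to reduce to polynomials $P \in (\lF_q[T])[X]_{\min}$ of some degree $m \leq n$ satisfying $|P(\xi)| \leq H(P)^{-w}$, and I fix $w > \max(d-1+\rho, d\theta/\delta)$ together with a small $\epsilon > 0$. Split into two cases. If (up to units) $P = q_{j'} X - p_{j'}$ for some $j'$, then $H(P) \asymp |q_{j'}|$ and $|P(\xi)| = |q_{j'}||\xi - p_{j'}/q_{j'}| \gg H(P)^{-(d-1+\rho+\epsilon)}$ by the limsup hypothesis, which contradicts $|P(\xi)| \leq H(P)^{-w}$ once $j'$ (hence $H(P)$) is large, because $w > d-1+\rho$. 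Otherwise $P(p_j/q_j) \neq 0$ for every $j$ (for $m=1$ this uses that $P$ is not one of the exceptional linear polynomials; for $m\geq 2$ it is automatic), so the minimal-polynomial condition gives $|q_j^m P(p_j/q_j)|\geq 1$ and hence $|P(p_j/q_j)| \geq |q_j|^{-m}$. Let $j_0$ be the smallest index with $|q_{j_0}|^{d+\delta-\epsilon-m} > C_1 H(P)$, where $C_1$ is the constant from Lemma \ref{lem:poly}; this $j_0$ exists since $d + \delta - \epsilon - m \geq \delta - \epsilon > 0$ by $m \leq n \leq d$. Lemma \ref{lem:poly} then gives
\[
|P(\xi) - P(p_{j_0}/q_{j_0})| \leq C_1 |q_{j_0}|^{-(d+\delta-\epsilon)} H(P) < |q_{j_0}|^{-m} \leq |P(p_{j_0}/q_{j_0})|,
\]
so the non-archimedean equality forces $|P(\xi)| = |P(p_{j_0}/q_{j_0})| \geq |q_{j_0}|^{-m}$. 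The minimality of $j_0$ plus the ratio bound $|q_{j_0}| \leq |q_{j_0-1}|^{\theta+\epsilon}$ (valid once $H(P)$, and thus $j_0$, is large) yields $|q_{j_0}| \leq (C_1 H(P))^{(\theta+\epsilon)/(d+\delta-\epsilon-m)}$; combining with $H(P)^w \leq |q_{j_0}|^m$ and sending $\epsilon \to 0$ forces $w \leq m\theta/(d-m+\delta) \leq d\theta/\delta$, contradicting the choice of $w$ and concluding the argument.

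The main obstacle is the second case of the upper bound: the index $j_0$ must be chosen large enough that the ratio estimate $|q_{j_0}| \leq |q_{j_0-1}|^{\theta+\epsilon}$ is available, while the exponent $d + \delta - \epsilon - m$ must remain positive — the latter relying crucially on the hypothesis $m \leq n \leq d$. Everything else amounts to routine bookkeeping around Lemma \ref{lem:poly} and the elementary bound $|q_j^m P(p_j/q_j)| \geq 1$ for nonzero values.
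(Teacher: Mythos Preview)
Your argument is correct and follows essentially the same route as the paper's proof: both use the $p_j/q_j$ for the lower bound, invoke Lemma~\ref{lem:differ} for the middle inequality, and for the upper bound reduce via Lemma~\ref{lem:weak} to irreducible $P$, split according to whether $P$ vanishes at some $p_j/q_j$, and in the non-vanishing case choose an index $j_0$ so that Lemma~\ref{lem:poly} forces $|P(\xi)|=|P(p_{j_0}/q_{j_0})|\geq |q_{j_0}|^{-\deg P}$. The only cosmetic difference is that the paper works with the uniform exponent $d$ throughout (bounding $|P(p_{j_0}/q_{j_0})|\geq |q_{j_0}|^{-d}$ and choosing $j_0$ so that $H(P)\ll |q_{j_0}|^{\delta-\iota}$), whereas you keep track of the actual degree $m\leq d$ and then use the elementary monotonicity $m\theta/(d-m+\delta)\leq d\theta/\delta$ at the end; the two choices of $j_0$ and the resulting bounds are equivalent.
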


Note that Lemma \ref{lem:bestrational} is a generalization of analogue of Lemma 1 in \cite{Amou91}.

\begin{proof}
Let $0<\iota <\delta $ be a real number.
By the assumption, there exists an integer $c_0\geq 1$ such that
\begin{equation*}
|q_j|\leq |q_{j+1}|\leq |q_j|^{\theta +\iota },\quad 
\frac{1}{|q_j|^{d+\rho +\iota }}\leq \left| \xi -\frac{p_j}{q_j} \right| \leq \frac{1}{|q_j|^{d+\delta -\iota }}
\end{equation*}
for all $j\geq c_0$.
Since $|\xi -p_j/q_j|\leq 1$ for $j\geq c_0$, we have $|q_j|\max (1,|\xi |)=\max (|p_j|,|q_j|)=H(p_j/q_j)$ for $j\geq c_0$.
Therefore, we obtain
\begin{equation*}
0< \left| \xi -\frac{p_j}{q_j} \right| \leq \frac{\max (1,|\xi |)^{d+\delta }}{H(p_j/q_j)^{d+\delta -\iota }}
\end{equation*}
for $j\geq c_0$.
Since $\iota $ is arbitrary, we have
\begin{equation*}
d-1+\delta \leq w_1 ^{*}(\xi )\leq w_2 ^{*}(\xi )\leq \ldots \leq w_d ^{*}(\xi ).
\end{equation*}
By Lemma \ref{lem:differ}, it is sufficient to show that
\begin{equation}\label{eq:remain}
w_d(\xi )\leq \max \left( d-1+\rho ,\frac{d\theta }{\delta } \right).
\end{equation}
Put $c_1:=\max (1,|\xi |)^{d-1}$.
Let $P(X) \in (\lF _q[T])[X]_{\min }$ be a polynomial of degree at most $d$ with $H(P)\geq c_1 ^{-1}|q_{c_0}|^{\frac{\delta }{\theta }}$.
We first consider the case where $P(p_j/q_j)=0$ for some $j\geq c_0$.
Then we can write $P(X)=a_j(q_j X-p_j)$ for some $a_j\in \lF _q$.
Therefore, we have
\begin{equation*}
|P(\xi )|\geq |q_j|^{-d+1-\rho -\iota }\geq H(P)^{-d+1-\rho -\iota }.
\end{equation*}
We now turn to the case where $P(q_j/p_j)\neq 0$ for all $j\geq c_0$.
We define an integer $j_0\geq c_0$ by $|q_{j_0}|\leq (c_1 H(P))^{\frac{\theta +\iota }{\delta -\iota }}<|q_{j_0+1}|$.
Then we have
\begin{equation*}
H(P)<c_1 ^{-1}|q_{j_0+1}|^{\frac{\delta -\iota }{\theta +\iota }}\leq c_1 ^{-1}|q_{j_0}|^{\delta -\iota }.
\end{equation*}
It follows from Lemma \ref{lem:poly} that
\begin{equation*}
|P(\xi )-P(p_{j_0}/q_{j_0})|\leq c_1 H(P)\left| \xi -\frac{p_{j_0}}{q_{j_0}}\right| <|q_{j_0}|^{-d}.
\end{equation*}
Since $|P(p_{j_0}/q_{j_0})|\geq |q_{j_0}|^{-d}$, we obtain
\begin{equation*}
|P(\xi )|=|P(p_{j_0}/q_{j_0})|\geq |q_{j_0}|^{-d}\geq (c_1 H(P))^{-\frac{d(\theta +\iota )}{\delta -\iota }}.
\end{equation*}
Therefore, by Lemma \ref{lem:weak}, we have
\begin{equation*}
w_d(\xi )\leq \max \left( d-1+\rho +\iota ,\frac{d(\theta +\iota )}{\delta -\iota } \right) .
\end{equation*}
Since $\iota $ is arbitrary, we obtain \eqref{eq:remain}.
\end{proof}

Let $\xi $ be in $\lF_q((T^{-1}))$ and we denote by $[a_0,a_1,a_2, \ldots ]$ the continued fraction expansion of $\xi $.
We define sequences $(p_n)_{n\geq -1}$ and $(q_n)_{n\geq -1}$ by
\begin{equation*}
\begin{cases}
p_{-1}=1,\ p_0=a_0,\ p_n=a_n p_{n-1}+p_{n-2},\ n\geq 1,\\
q_{-1}=0,\ q_0=1,\ q_n=a_n q_{n-1}+q_{n-2},\ n\geq 1.
\end{cases}
\end{equation*}
We call $(p_n/q_n)_{n\geq 0}$ the {\itshape convergent sequence} of $\xi $.
We gather fundamental properties of continued fractions in the following lemma.

\begin{lem}\label{lem:conti.lem}
Let $\xi =[a_0,a_1,a_2,\ldots ]$ be in $\lF _q((T^{-1}))$ and $(p_n/q_n)_{n\geq 0}$ be the convergent sequence of $\xi $.
Then the following hold:
\begin{enumerate}
\item $\dfrac{p_n}{q_n}=[a_0,a_1,\ldots a_n]\quad (n\geq 0)$, \label{enum:quo}
\item $\gcd(p_n,q_n)=1\quad (n\geq 0)$, \label{enum:prime}
\item $|q_n|=|a_1||a_2|\cdots |a_n|\quad (n\geq 1)$, \label{enum:q}
\item $\abs{\xi -\dfrac{p_n}{q_n}} =\dfrac{1}{\abs{q_n}\abs{q_{n+1}}}=\dfrac{1}{\abs{a_{n+1}}\abs{q_n}^2}\quad (n\geq 0)$, \label{enum:differ}
\item $\xi =\dfrac{\xi _{n+1}p_n+p_{n-1}}{\xi _{n+1}q_n+q_{n-1}}$, where $\xi _{n+1}=[a_{n+1},a_{n+2},\ldots ]\quad (n\geq 0)$. \label{enum:par}
\end{enumerate}
\end{lem}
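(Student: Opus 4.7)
All five assertions are direct non-Archimedean adaptations of classical identities for real continued fractions, and they can be handled by a short induction on $n$ from the defining recurrences for $p_n$ and $q_n$. The only essentially new ingredient compared with the real case is the ultrametric property $|x+y|=\max(|x|,|y|)$ when $|x|\neq|y|$, which in fact makes several of the estimates cleaner than in characteristic zero.

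First I would establish the determinant identity $p_n q_{n-1}-p_{n-1}q_n=(-1)^{n-1}$ by a one-line induction from the recurrences; item (ii) then follows at once because any common factor of $p_n$ and $q_n$ in $\lF_q[T]$ must divide $\pm 1$. For (i) and (v), an induction on $n$ based on the matrix identity
\begin{equation*}
\begin{pmatrix} p_n & p_{n-1} \\ q_n & q_{n-1}\end{pmatrix}=\begin{pmatrix} a_0 & 1 \\ 1 & 0 \end{pmatrix}\begin{pmatrix} a_1 & 1 \\ 1 & 0 \end{pmatrix}\cdots\begin{pmatrix} a_n & 1 \\ 1 & 0\end{pmatrix}
\end{equation*}
gives (i); applying the same formal computation to $[a_0,a_1,\ldots,a_n,\xi_{n+1}]$, with the Laurent series $\xi_{n+1}$ replacing a formal partial quotient, yields (v). These steps are purely algebraic and use no absolute value.

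Next I would prove (iii) and (iv), where the ultrametric property enters. For (iii), the standing hypothesis $\deg a_n\geq 1$ for $n\geq 1$ gives $|a_n|\geq q>1$, so that $|a_n q_{n-1}|>|q_{n-2}|$ by induction, and the ultrametric equality forces $|q_n|=|a_n||q_{n-1}|$; iterating produces the product formula. For (iv), item (v) gives
\begin{equation*}
\xi-\frac{p_n}{q_n}=\frac{p_{n-1}q_n-p_n q_{n-1}}{q_n(\xi_{n+1}q_n+q_{n-1})}=\frac{\pm 1}{q_n(\xi_{n+1}q_n+q_{n-1})}.
\end{equation*}
Since $\xi_{n+1}=a_{n+1}+(\text{Laurent tail of absolute value }<1)$, we have $|\xi_{n+1}|=|a_{n+1}|>1$, and a second application of the ultrametric equality yields $|\xi_{n+1}q_n+q_{n-1}|=|a_{n+1}||q_n|=|q_{n+1}|$, finishing (iv).

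\textbf{Expected obstacle.} There is no substantive difficulty; the only point requiring care is to verify the strict inequalities $|q_{n-1}|<|q_n|$ and $|q_{n-1}|<|\xi_{n+1}q_n|$ that license the ultrametric equalities used in (iii) and (iv), and both follow immediately from the assumption $\deg a_n\geq 1$ for $n\geq 1$.
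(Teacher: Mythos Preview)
Your proposal is correct. The paper does not supply its own proof of this lemma: it simply introduces the statement with ``We gather fundamental properties of continued fractions in the following lemma'' and treats all five items as well-known facts, with no proof environment following. Your outline is precisely the standard argument---the determinant identity for (ii), the matrix product for (i) and (v), and the ultrametric equality $|x+y|=\max(|x|,|y|)$ when $|x|\neq|y|$ for (iii) and (iv)---and it is complete as written. The only point worth double-checking is the base case and first step of (iii): you need $|q_1|=|a_1|>|q_0|=1$ to start the induction $|q_{n-1}|<|q_n|$, which is immediate from $\deg a_1\geq 1$, and you already flagged this. There is nothing to compare with here, since the paper offers no alternative argument.
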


The following lemma is a well-known result (see e.g.\ \cite{Lasjaunias00, Thakur11}).

\begin{lem}\label{lem:conti.w_1}
Let $\xi =[a_0,a_1,a_2,\ldots ]$ be in $\lF _q((T^{-1}))$ and $(p_n/q_n)_{n\geq 0}$ be the convergent sequence of $\xi $.
Then we have
\begin{equation*}
w_1 (\xi )= w_1 ^{*}(\xi )=\limsup_{n\rightarrow \infty } \frac{\deg q_{n+1}}{\deg q_n}.
\end{equation*}
\end{lem}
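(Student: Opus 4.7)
The equality $w_1(\xi)=w_1^{*}(\xi)$ is already noted in the text preceding the lemma, so the task reduces to computing their common value. The plan is to prove that it equals $\theta := \limsup_{n\to\infty}\deg q_{n+1}/\deg q_n$ by a two-sided sandwich: first, exhibit the convergents $p_n/q_n$ themselves as explicit rational approximations realizing the exponent $\theta$, which will give $w_1^{*}(\xi)\geq \theta$; second, invoke Lemma \ref{lem:bestrational} on the convergent sequence to preclude any substantially better rational approximation, which will give $w_1(\xi)\leq \theta$.

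For the lower bound, I feed $p_n/q_n$ straight into the definition of $w_1^{*}$. By Lemma \ref{lem:conti.lem}(ii) and (iv), one has $\gcd(p_n,q_n)=1$ and $|\xi-p_n/q_n|=|q_n|^{-1-\deg q_{n+1}/\deg q_n}$. The ultrametric inequality forces $|p_n/q_n|=|\xi|$ for all $n$ large, hence $H(p_n/q_n)=|q_n|\max(1,|\xi|)\asymp |q_n|$. Passing to a subsequence along which $\deg q_{n+1}/\deg q_n\to \theta$ then yields $w_1^{*}(\xi)\geq \theta$, and in case $\theta=+\infty$ this already settles the lemma.

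For the upper bound I assume $\theta<+\infty$ and apply Lemma \ref{lem:bestrational} to the convergents with $d=1$. By Lemma \ref{lem:conti.lem}(iv) one can rewrite
$$\frac{-\log|\xi-p_n/q_n|}{\log|q_n|}=1+\frac{\deg q_{n+1}}{\deg q_n},$$
whose $\limsup$ is $1+\theta$; moreover, since every partial quotient has degree $\geq 1$, one has $\deg q_{n+1}\geq \deg q_n+1$, so this ratio is $\geq 1$ and the $\liminf$ of the display is at least $2$. The hypotheses of Lemma \ref{lem:bestrational} are thereby met with $\rho:=\theta$ and $\delta:=1$, and the conclusion yields $w_1(\xi)\leq \max(\theta,\theta)=\theta$. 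The only delicate point — and the only real obstacle — is the liminf hypothesis $d+\delta\leq \liminf$: it is tight ($2\leq 2$) and rests on the crude bound $\deg q_{n+1}\geq \deg q_n+1$; once that verification is in place, the two sandwich bounds combine to give $w_1(\xi)=w_1^{*}(\xi)=\theta$.
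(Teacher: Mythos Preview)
Your argument is correct. The paper does not actually prove this lemma: it records it as a well-known result and refers to \cite{Lasjaunias00, Thakur11} for proofs, so there is no ``same approach'' to match.

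What is worth pointing out is that your route is exactly the $d=1$ specialization of the paper's own proof of Proposition~\ref{prop:conti.w_d}. That proposition uses Lemma~\ref{lem:conti.w_1} for the lower bound and Lemma~\ref{lem:bestrational} for the upper bound; you replace the appeal to Lemma~\ref{lem:conti.w_1} by the direct convergent computation (which is the natural thing to do, since you are proving Lemma~\ref{lem:conti.w_1}), and then feed the convergent sequence into Lemma~\ref{lem:bestrational} with $d=1$, $\delta=1$, $\rho=\theta$. The verification that the liminf hypothesis $d+\delta\leq \liminf$ holds with equality, via $\deg q_{n+1}\geq \deg q_n+1$, is exactly the check that the hypothesis $\liminf \deg q_{n+1}/\deg q_n\geq 2d-1$ of Proposition~\ref{prop:conti.w_d} is automatic when $d=1$. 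So your proof shows, in effect, that once Lemma~\ref{lem:bestrational} is in hand, Lemma~\ref{lem:conti.w_1} need not be imported from the literature but drops out of the same mechanism; the paper chose instead to cite the classical statement and reserve Lemma~\ref{lem:bestrational} for the genuinely new range $d\geq 2$.
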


We extend Lemma \ref{lem:conti.w_1} by using Lemma \ref{lem:bestrational}.

\begin{prop}\label{prop:conti.w_d}
Let $d\geq 1$ be an integer and $\xi =[a_0,a_1,a_2,\ldots ]$ be in $\lF _q((T^{-1}))$.
Let $(p_n/q_n)_{n\geq 0}$ be the convergent sequence of $\xi $.
Assume that 
\begin{equation*}
\liminf_{n\rightarrow \infty }\frac{\deg q_{n+1}}{\deg q_n} \geq 2 d-1.
\end{equation*}
Then we have
\begin{equation}\label{eq:best.alg}
w_1(\xi )=w_1 ^{*}(\xi )=\ldots =w_d(\xi )=w_d ^{*}(\xi )=\limsup _{n\rightarrow \infty } \frac{\deg q_{n+1}}{\deg q_n}.
\end{equation}
\end{prop}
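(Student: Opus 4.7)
The plan is to reduce the proposition to the single upper bound $w_d(\xi) \leq \theta$, where $\theta := \limsup_{n \to \infty} \deg q_{n+1}/\deg q_n$. Lemma \ref{lem:conti.w_1} already gives $w_1(\xi) = w_1^*(\xi) = \theta$; combining this with the monotonicity in $n$ of $(w_n(\xi))_{n \geq 1}$ and $(w_n^*(\xi))_{n \geq 1}$ and with Lemma \ref{lem:differ} yields
$$\theta = w_1(\xi) = w_1^*(\xi) \leq w_n^*(\xi) \leq w_n(\xi) \leq w_d(\xi)$$
for each $1 \leq n \leq d$. Thus, once $w_d(\xi) \leq \theta$ is established, equality propagates across the whole chain in \eqref{eq:best.alg}.

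For the upper bound I intend to invoke Lemma \ref{lem:bestrational} on the convergent sequence $(p_j/q_j)_{j \geq 0}$ of $\xi$; we may assume $\theta < +\infty$, as otherwise the bound is vacuous. Lemma \ref{lem:conti.lem} (ii) and (iii) give $\gcd(p_j, q_j) = 1$ and ensure that $(|q_j|)_{j \geq 0}$ is strictly increasing and divergent, while Lemma \ref{lem:conti.lem} (iv) gives $|\xi - p_j/q_j| = |q_j|^{-1}|q_{j+1}|^{-1}$, so
$$\frac{-\log|\xi - p_j/q_j|}{\log|q_j|} = 1 + \frac{\deg q_{j+1}}{\deg q_j}.$$
By the standing hypothesis, this ratio has liminf at least $2d$ and limsup equal to $1 + \theta$, and $\limsup \log|q_{j+1}|/\log|q_j|$ equals $\theta$.

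I then apply Lemma \ref{lem:bestrational} with $\delta := d$ and $\rho := 1 + \theta - d$; both parameters are positive since $\theta \geq 2d-1 \geq 1$, and the hypotheses on the approximation ratios are satisfied by the discussion above. The lemma's upper estimate (for $n = d$) becomes
$$w_d(\xi) \leq \max\!\left(d - 1 + \rho,\ \frac{d\theta}{\delta}\right) = \max(\theta, \theta) = \theta,$$
which closes the argument. The only delicate point, which I regard as the conceptual heart of the proof rather than a serious technical obstacle, is the calibration $\delta = d$: the hypothesis $\liminf \deg q_{n+1}/\deg q_n \geq 2d-1$ is tailored precisely so that $\delta = d$ is admissible and the two arguments of the $\max$ coincide with $\theta$; any weaker liminf hypothesis would force $\delta < d$ and the term $d\theta/\delta$ would strictly exceed $\theta$, breaking the equality.
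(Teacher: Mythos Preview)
Your proof is correct and follows essentially the same route as the paper: both arguments obtain the lower bound from Lemma~\ref{lem:conti.w_1} together with Lemma~\ref{lem:differ} and the monotonicity of the exponents, and both obtain the upper bound $w_d(\xi)\le\theta$ by applying Lemma~\ref{lem:bestrational} to the convergents with the identical parameter choice $\theta=\limsup_j A_j$, $\delta=d$, $\rho=1+\theta-d$. Your explicit handling of the case $\theta=+\infty$ and the closing remark on why $\delta=d$ is the ``right'' calibration are nice additions, but the substance of the argument is the same.
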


\begin{proof}
For $j\geq 1$, put
\begin{equation*}
A_j:=\frac{\deg q_{j+1}}{\deg q_j}=\frac{\log |q_{j+1}|}{\log |q_j|}.
\end{equation*}
It follows from Lemmas \ref{lem:differ} and \ref{lem:conti.w_1} that for all $n\geq 1$,
\begin{equation*}
\limsup _{j\rightarrow \infty } A_j \leq w_n ^{*}(\xi )\leq w_n(\xi ).
\end{equation*}
By Lemma \ref{lem:conti.lem} (\ref{enum:differ}), we have
\begin{equation*}
\frac{-\log |\xi -p_j/q_j|}{\log |q_j|}=1+A_j
\end{equation*}
for all $j\geq 1$.
It follows from Lemma \ref{lem:conti.lem} (\ref{enum:prime}) and (\ref{enum:q}) that $q_j \neq 0$ and $\gcd(p_j,q_j)=1$ for all $j\geq 1$.
Moreover, the positive integer sequence $(|q_j|)_{j\geq 1}$ is strictly increasing, which implies that it is divergent.
Applying Lemma \ref{lem:bestrational} with
\begin{equation*}
\theta =\limsup_{j\rightarrow \infty }A_j,\quad \delta =d,\quad \rho =1+\limsup_{j\rightarrow \infty }A_j-d,
\end{equation*}
we obtain
\begin{equation*}
w_n ^{*}(\xi )\leq w_n(\xi )\leq \limsup _{j\rightarrow \infty } A_j
\end{equation*}
for all $1\leq n\leq d$.
Hence, we have \eqref{eq:best.alg}.
\end{proof}

Schmidt \cite{Schmidt00} characterized Laurent series of Class IA by using continued fractions.

\begin{thm}\label{thm:classIAiff}
Let $\alpha $ be in $\lF _q((T^{-1}))$.
Then $\alpha $ is of Class IA if and only if the continued fraction expansion of $\alpha $ is of the form
\begin{equation}\label{eq:IA1}
\alpha =[a_1, a_2, \ldots ,a_t, b_1, b_2, \ldots ],
\end{equation}
where $t\geq 0$ and
\begin{equation}
b_{j+s}=
\begin{cases}\label{eq:IA2}
a b_j ^{p^k} & \text{when } j \text{ is odd}, \\
a^{-1} b_j ^{p^k} & \text{when } j \text{ is even}
\end{cases}
\end{equation}
for some $a\in \lF _q ^{\times }$, integers $s\geq 1$ and $k\geq 0$.
\end{thm}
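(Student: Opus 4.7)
The plan is to deduce the equivalence from two independent ingredients: the behavior of continued fractions under the Frobenius, and under unimodular Möbius transformations. Throughout, I call a Möbius transformation $x\mapsto (Ax+B)/(Cx+D)$ with $A,B,C,D\in \lF _q[T]$ \emph{unimodular} if $AD-BC\in \lF _q^{\times }$.

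First I would record that in characteristic $p$ the Frobenius is compatible with continued fractions: if $\gamma =[c_0;c_1,c_2,\ldots ]$ with $c_i\in \lF _q[T]$, then $\gamma ^{p^k}=[c_0^{p^k};c_1^{p^k},c_2^{p^k},\ldots ]$. This follows by induction, using $(x+y)^{p^k}=x^{p^k}+y^{p^k}$ and the fact that $\deg c_i^{p^k}=p^k\deg c_i\geq 1$ for $i\geq 1$, so that the resulting expansion is an admissible continued fraction in $\lF _q((T^{-1}))$.

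Next I would prove a Serret-type statement: for $\gamma ,\delta \in \lF _q((T^{-1}))$, one has $\gamma =(A\delta +B)/(C\delta +D)$ with $A,B,C,D\in \lF _q[T]$ and $AD-BC\in \lF _q^{\times }$ if and only if there exist indices $N,M\geq 0$ and a constant $c\in \lF _q^{\times }$ such that, writing $\gamma =[c_0;c_1,\ldots ]$ and $\delta =[d_0;d_1,\ldots ]$, one has $c_{N+j}=c^{\varepsilon _j}d_{M+j}$ for all $j\geq 1$ with signs $\varepsilon _j\in \{ +1,-1\} $ alternating. For the ``only if'' direction, note that the group of unimodular transformations is generated by the translations $x\mapsto x+P$ (for $P\in \lF _q[T]$), the inversion $x\mapsto 1/x$, and the scalings $x\mapsto cx$ (for $c\in \lF _q^{\times }$); one then computes the effect of each generator on the partial quotients — translations modify the initial term, inversion shifts the expansion by one, and a scaling $x\mapsto cx$ multiplies the partial quotients alternately by $c$ and $c^{-1}$. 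The converse is obtained by assembling an explicit unimodular matrix from the shift and the scaling factor.

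For the forward direction of the theorem, assuming $\alpha $ is of Class IA, we have $\alpha =(R\alpha ^{p^k}+S)/(T\alpha ^{p^k}+U)$ with $RU-ST\in \lF _q^{\times }$; by the first step $\alpha ^{p^k}=[a_0^{p^k};a_1^{p^k},\ldots ]$, and applying the Serret-type statement to the pair $\alpha ,\alpha ^{p^k}$ yields the form \eqref{eq:IA1}--\eqref{eq:IA2}. For the converse, set $\beta :=[b_1,b_2,\ldots ]$. The relation \eqref{eq:IA2} shows that the continued fractions of $\beta $ and $\beta ^{p^k}$ agree after a shift by $s$ up to alternating scaling by $a$, so by the ``if'' part of the Serret-type statement there is a unimodular $M$ with $\beta =M(\beta ^{p^k})$. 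Lemma \ref{lem:conti.lem}(\ref{enum:par}) relates $\alpha $ to $\beta $ by the matrix of convergents of $[a_1,\ldots ,a_t]$, whose determinant is $\pm 1\in \lF _q^{\times }$; raising its entries to the $p^k$-th power yields a unimodular matrix relating $\alpha ^{p^k}$ to $\beta ^{p^k}$, since $\lF _q^{\times }$ is preserved by Frobenius. Composing these three unimodular relations expresses $\alpha $ as a unimodular Möbius transform of $\alpha ^{p^k}$, so $\alpha $ is of Class IA. The main obstacle is the Serret-type lemma: while its analogue over $\lZ $ is classical, the delicate point here is to track the fact that requiring $\det \in \lF _q^{\times }$ (rather than merely nonzero in $\lF _q[T]$) forces the tails to match strictly up to a single alternating scalar, which is exactly what produces the rigid form \eqref{eq:IA2} rather than a looser shape.
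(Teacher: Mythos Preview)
The paper does not give a proof here at all; it simply cites Theorem~4 of \cite{Schmidt00}. Your outline supplies precisely the argument that Schmidt carries out: the two ingredients are (i) the compatibility of Frobenius with the continued-fraction algorithm in characteristic $p$, and (ii) a Serret-type description of $GL_2(\lF_q[T])$-equivalence in terms of eventual matching of tails up to an alternating $\lF_q^{\times}$-scalar. Your identification of (ii) as the delicate point is accurate. One small remark: rather than tracking the effect of each generator on the expansion, it is cleaner (and this is what Schmidt does) to run the Euclidean algorithm on the columns of the unimodular matrix, which peels off partial quotients one at a time and makes the alternating scalar emerge automatically from the residual diagonal matrix $\mathrm{diag}(c,c^{-1})$ left at the end. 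With that adjustment your sketch is complete and matches the cited reference.
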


\begin{proof}
See Theorem 4 in \cite{Schmidt00}.
\end{proof}

Thakur \cite{Thakur99} worked at the ratios of the degrees of the denominators of the convergent sequences for Laurent series of Class IA.

\begin{thm}\label{thm:supinf}
Let $\alpha \in \lF _q((T^{-1}))$ be as in \eqref{eq:IA1} and \eqref{eq:IA2}, and $(p_n/q_n)_{n\geq 0}$ be the convergent sequence of $\alpha $.
Put $d_i :=\deg b_i$ and
\begin{equation*}
r_i:=\frac{d_i}{p^k (\sum _{j=1}^{i-1}d_j )+\sum _{j=i}^{s}d_j }.
\end{equation*}
Then we have
\begin{gather}
\limsup _{n\rightarrow \infty }\frac{\deg q_{n+1}}{\deg q_n} =1+(p^k-1) \max \{ r_1, \ldots ,r_s \} ,\label{eq:limsup} \\
\liminf _{n\rightarrow \infty }\frac{\deg q_{n+1}}{\deg q_n} =1+(p^k-1) \min \{ r_1, \ldots ,r_s \} . \label{eq:liminf}
\end{gather}
\end{thm}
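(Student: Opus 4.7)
The plan is to reduce everything to a bookkeeping computation of $\deg q_n$ as $n$ ranges through a full "generalized period'' of length $s$, exploiting the fact that the degree of $b_{j+s}$ is exactly $p^k$ times the degree of $b_j$.

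First I would set $e_j := \deg b_j$ and note that \eqref{eq:IA2} immediately gives $e_{j+s} = p^k e_j$, since $a \in \lF_q^\times$ contributes nothing to the degree. Writing any index of the $b$-sequence in the form $rs + i$ with $1 \le i \le s$ and $r \ge 0$, we get $e_{rs+i} = p^{kr} d_i$. Then, using Lemma \ref{lem:conti.lem} (\ref{enum:q}), for $n = t + qs + i$ with $0 \le i \le s-1$ I would compute
\begin{equation*}
\deg q_n = C_0 + \sum_{r=0}^{q-1}\sum_{j=1}^{s} p^{kr} d_j + \sum_{j=1}^{i} p^{kq} d_j = p^{kq}\left[\frac{D}{p^k-1} + S_i\right] + O(1),
\end{equation*}
where $C_0 = \sum_{j=1}^{t}\deg a_j$, $D := \sum_{j=1}^{s} d_j$, and $S_i := \sum_{j=1}^{i} d_j$ with $S_0 = 0$. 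The closed form follows from summing the geometric series $\sum_r p^{kr}$.

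Next, from Lemma \ref{lem:conti.lem} (\ref{enum:q}) again, $\deg q_{n+1} = \deg q_n + \deg a_{n+1}$, so
\begin{equation*}
\frac{\deg q_{n+1}}{\deg q_n} = 1 + \frac{\deg a_{n+1}}{\deg q_n}.
\end{equation*}
For $n = t + qs + i$ as above, the partial quotient $a_{n+1}$ corresponds to $b_{qs+i+1}$, so $\deg a_{n+1} = p^{kq} d_{i+1}$. Plugging in the asymptotic formula for $\deg q_n$ and letting $q \to \infty$ with $i \in \{0, 1, \ldots, s-1\}$ fixed, the ratio tends to
\begin{equation*}
\frac{d_{i+1}}{\dfrac{D}{p^k-1} + S_i} = \frac{(p^k-1)\, d_{i+1}}{D + (p^k-1) S_i} = (p^k-1)\, r_{i+1},
\end{equation*}
where the last equality matches the definition of $r_{i+1}$ after noting $p^k S_i + (D - S_i) = D + (p^k-1)S_i$.

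Finally, since every sufficiently large $n$ lies in exactly one residue class $i$ modulo $s$ (once we strip off the initial block of length $t$), the set of subsequential limits of $\deg q_{n+1}/\deg q_n$ is precisely $\{1 + (p^k-1) r_j : 1 \le j \le s\}$; taking the maximum gives \eqref{eq:limsup} and the minimum gives \eqref{eq:liminf}. The only real obstacle is keeping the index offsets (the initial $t$ terms, the shift between $a_{n+1}$ and $b_{qs+i+1}$, and the difference between $S_{i-1}$ appearing in $r_i$ and $S_i$ appearing from summing up to position $i$) consistent; once the bookkeeping is done, the rest is elementary.
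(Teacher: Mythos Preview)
Your argument is correct and complete: the degree bookkeeping via $\deg b_{rs+i} = p^{kr} d_i$, the geometric-series evaluation of $\deg q_n$, and the identification of the subsequential limits $1 + (p^k-1)r_{i+1}$ are all sound, and since the sequence of ratios decomposes into $s$ convergent subsequences indexed by the residue of $n-t$ modulo $s$, the limsup and liminf are simply the maximum and minimum of those $s$ limits.

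The paper does not give a self-contained proof here: it cites Theorem~1~(1) of Thakur~\cite{Thakur99} (combined with Lemma~\ref{lem:conti.w_1}) for \eqref{eq:limsup} and remarks that \eqref{eq:liminf} follows by the same method. Your write-up is essentially the computation Thakur carries out, so the mathematical content is the same; you are unpacking the citation rather than taking a different route.

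Two small remarks. First, you use $q$ as a running index in $n = t + qs + i$, which clashes with the ambient notation $q = \lvert \lF_q\rvert$; rename it (e.g.\ $m$). Second, the geometric-series step tacitly assumes $k \ge 1$; the case $k = 0$ (allowed in Theorem~\ref{thm:classIAiff}) makes the continued fraction ultimately periodic, and then $\deg q_{n+1}/\deg q_n \to 1 = 1 + (p^0-1)r_i$ trivially, but it is worth a one-line remark.
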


\begin{proof}
We have \eqref{eq:limsup} by Theorem 1 (1) in \cite{Thakur99} and Lemma \ref{lem:conti.w_1}.
Meanwhile, \eqref{eq:liminf} follows in a similar way to the proof of Theorem 1 (1) in \cite{Thakur99}.
\end{proof}

We define a valuation $v$ on $\lF_q((T^{-1}))$ by $v(\xi )=-\log _q\abs{\xi }$ for $\xi \in \lF_q ((T^{-1}))$.

\begin{lem}\label{lem:irrcri}
	Let
	\begin{equation*}
	P(X)=X^m+\sum_{i=1}^{m}a_iX^{m-i}
	\end{equation*}
	be a monic polynomial in $(\lF_q((T^{-1})))[X]$.
	Assume that $v(a_m)$ is a positive integer with $\gcd (v(a_m),m)=1$.
	Then $P(X)$ is irreducible over $\lF_q((T^{-1}))$ if and only if $v(a_i)/i>v(a_m)/m$ for all $1\leq i\leq m-1$.
\end{lem}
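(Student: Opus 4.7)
The plan is to invoke the theory of the Newton polygon over the complete discretely valued field $\lF_q((T^{-1}))$, whose value group under $v$ is $\lZ$. First I would extend $v$ uniquely to the algebraic closure $\overline{\lF_q((T^{-1}))}$ and recall two standard facts: (a) if $\alpha$ lies in a finite extension of $\lF_q((T^{-1}))$ of degree $d$, then $v(\alpha)\in \tfrac{1}{e}\lZ$ for some $e\mid d$, so the denominator of $v(\alpha)$ (written in lowest terms) divides $d$; and (b) the Newton polygon of a monic polynomial $P(X)=X^m+a_1X^{m-1}+\cdots+a_m$, defined as the lower convex hull of the points $(0,0),(1,v(a_1)),\ldots,(m,v(a_m))$, has the property that a segment of slope $\lambda$ and horizontal length $\ell$ corresponds to exactly $\ell$ roots (with multiplicity) of valuation $-\lambda$.

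For the direction ($\Leftarrow$), assume $v(a_i)/i>v(a_m)/m$ for $1\leq i\leq m-1$. Then every interior point $(i,v(a_i))$ lies strictly above the segment from $(0,0)$ to $(m,v(a_m))$, so the Newton polygon consists of the single segment of slope $v(a_m)/m$. Hence every root $\alpha$ of $P$ satisfies $v(\alpha)=-v(a_m)/m$. Because $\gcd(v(a_m),m)=1$, the fraction $v(a_m)/m$ is already in lowest terms, so by fact (a) the degree of $\alpha$ over $\lF_q((T^{-1}))$ must be divisible by $m$. Since this degree also divides $\deg P=m$, the minimal polynomial of $\alpha$ has degree exactly $m$, so $P$ is irreducible.

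For the direction ($\Rightarrow$), assume $P$ is irreducible. The uniqueness of the extension of $v$ to $\overline{\lF_q((T^{-1}))}$ together with the fact that the roots of $P$ are Galois conjugate over $\lF_q((T^{-1}))$ forces all roots to share the same valuation $\lambda$; since $a_m$ is (up to sign) their product, $\lambda=v(a_m)/m$. Consequently the Newton polygon has a single slope, which is equivalent to $v(a_i)\geq i\,v(a_m)/m$ for $1\leq i\leq m-1$. Now the coprimality hypothesis $\gcd(v(a_m),m)=1$ implies that $i\,v(a_m)/m$ is not an integer for any $1\leq i\leq m-1$, while $v(a_i)\in\lZ\cup\{+\infty\}$; therefore the weak inequality is automatically strict, giving $v(a_i)/i>v(a_m)/m$.

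The only subtle point I expect is justifying the Newton polygon facts without appealing to a reference. The cleanest way is to treat both directions simultaneously via the factorization of $P$ over the algebraic closure: write $a_m=(-1)^m\prod_{j=1}^m\alpha_j$ and, more generally, express $v(a_i)$ using elementary symmetric functions of the $\alpha_j$'s, which yields the inequality $v(a_i)\geq\sum_{j\in S}v(\alpha_j)$ for some $i$-subset $S$ (with equality when the valuations $v(\alpha_j)$ are all equal or more generally when the minimum is uniquely attained). This elementary manipulation is routine but is the one place where attention is required.
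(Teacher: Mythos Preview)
Your Newton-polygon argument is correct and is the standard route to this Eisenstein-type criterion. The paper does not supply its own proof; it simply refers to Proposition~2.2 of Popescu--Zaharescu (J.\ Number Theory \textbf{52} (1995), 98--118), so there is nothing further to compare.

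One cosmetic slip worth fixing: with your choice of vertices $(i,v(a_i))$, where $a_0=1$ sits at $(0,0)$ and the constant term $a_m$ at $(m,v(a_m))$, a segment of slope $\lambda$ corresponds to roots of valuation $\lambda$, not $-\lambda$ (check it on $P(X)=X-c$: the unique segment has slope $v(c)$ and the unique root $c$ has valuation $v(c)$). This does not affect the argument, since in the $(\Leftarrow)$ direction you only use that every root has valuation $\pm v(a_m)/m$, hence of denominator $m$ in lowest terms, and in the $(\Rightarrow)$ direction you recover the correct value $v(\alpha)=v(a_m)/m$ directly from the product formula anyway.
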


\begin{proof}
	See Proposition 2.2 in \cite{Popescu95}.
\end{proof}

We give a sufficient condition to determine degrees of Laurent series of Class IA.
The following lemma is inspired by the proof of Theorem 2 in \cite{Chen13}.

\begin{lem}\label{lem:classiadegree}
	Let $\alpha \in \lF_q((T^{-1}))$ be as in \eqref{eq:IA1} and \eqref{eq:IA2}.
	If $\gcd (\deg b_s,p)=1$, then $\alpha $ is algebraic of degree $p^k+1$.
\end{lem}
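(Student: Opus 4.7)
The plan is to reduce to the tail $\beta := [b_1, b_2, \ldots]$, which has the same degree over $\lF_q(T)$ as $\alpha$ since $\alpha = [a_1, \ldots, a_t, \beta]$ is a non-trivial Möbius transform of $\beta$ with coefficients in $\lF_q[T]$. Setting $\gamma := [b_{s+1}, b_{s+2}, \ldots]$, the relation \eqref{eq:IA2} yields $\gamma = [ab_1^{p^k}, a^{-1}b_2^{p^k}, ab_3^{p^k}, \ldots]$. Two identities collapse this. First, the elementary scaling identity $[\lambda c_0, \lambda^{-1} c_1, \lambda c_2, \ldots] = \lambda [c_0, c_1, c_2, \ldots]$, proved by a straightforward induction on truncation, removes the alternating factors of $a^{\pm 1}$. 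Second, the Frobenius identity $[b_1^{p^k}, b_2^{p^k}, \ldots] = \beta^{p^k}$ follows by induction on truncation depth from the fact that the $p^k$-power map is a continuous ring homomorphism on $\lF_q((T^{-1}))$. Together they give $\gamma = a\beta^{p^k}$. Applying Lemma \ref{lem:conti.lem}(v) to $\beta = [b_1, \ldots, b_s, \gamma]$ and clearing denominators, $\beta$ emerges as a root of
\[ P(X) := aq_{s-1}X^{p^k+1} - ap_{s-1}X^{p^k} + q_{s-2}X - p_{s-2} \in (\lF_q[T])[X], \]
a polynomial of degree exactly $p^k+1$, where $p_i/q_i$ denote the convergents of $\beta$.

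The key step is to show $P$ is irreducible over $\lF_q(T)$. I would analyze the Newton polygon of $P$ at the infinite place, with the valuation $v$ defined before Lemma \ref{lem:irrcri}. Using $\deg p_{s-1} = \deg b_1 + \cdots + \deg b_s$, $\deg q_{s-1} = \deg b_2 + \cdots + \deg b_s$, $\deg p_{s-2} = \deg p_{s-1} - \deg b_s$, and $\deg q_{s-2} = \deg q_{s-1} - \deg b_s$, a direct computation shows that the polygon has exactly two segments: a length-$1$ segment of slope $\deg b_1$ (whose unique root in $\lF_q((T^{-1}))$ has valuation $-\deg b_1$ and is precisely $\beta$), and a length-$p^k$ segment of slope $-\deg b_s/p^k$. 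The hypothesis $\gcd(\deg b_s, p) = 1$ forces $\gcd(\deg b_s, p^k) = 1$, so the second slope is already in lowest terms with denominator $p^k$; by the Newton polygon theorem the associated factor of $P$ over $\lF_q((T^{-1}))$ is irreducible of degree $p^k$, with roots in a totally ramified extension of degree $p^k$. Thus over $\lF_q((T^{-1}))$ we have $P(X) = c\cdot(X - \beta)\cdot Q(X)$ with $c \in \lF_q((T^{-1}))^\times$ and $Q$ irreducible of degree $p^k$.

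Finally, $\beta \notin \lF_q(T)$, since elements of $\lF_q(T)$ have finite continued fraction expansions whereas $[b_1, b_2, \ldots]$ is infinite. By unique factorization in $\lF_q((T^{-1}))[X]$, any nontrivial factorization $P = F_1F_2$ over $\lF_q(T)$ must, up to units in $\lF_q((T^{-1}))^\times$, partition the irreducible factors $(X-\beta)$ and $Q$ between $F_1$ and $F_2$. Whichever factor contains $(X-\beta)$ would then, as a polynomial with coefficients in $\lF_q(T)$, force $\beta \in \lF_q(T)$, a contradiction. Hence $P$ is irreducible over $\lF_q(T)$, and $\deg \alpha = \deg \beta = p^k + 1$. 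The main obstacle to handle carefully is precisely this last descent step: Lemma \ref{lem:irrcri} cannot be applied to $P$ itself because $P$ is reducible over $\lF_q((T^{-1}))$, and one must leverage the asymmetry that the reducibility arises from a root $\beta \in \lF_q((T^{-1}))$ which, by its explicit continued-fraction construction, cannot descend to $\lF_q(T)$.
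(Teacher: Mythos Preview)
Your proof is correct and follows essentially the same route as the paper: reduce to the tail $\beta$, derive the degree-$(p^k+1)$ equation from $[b_{s+1},b_{s+2},\ldots]=a\beta^{p^k}$, factor as $(X-\beta)Q$ over $\lF_q((T^{-1}))$ with $Q$ irreducible of degree $p^k$, and conclude from $\beta\notin\lF_q(T)$. The only difference is bookkeeping: the paper writes $Q$ out explicitly and applies Lemma~\ref{lem:irrcri} to it, whereas you read off both the factorization and the irreducibility of $Q$ from the Newton polygon of $P$ in one stroke---the same mechanism, packaged slightly differently.
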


\begin{proof}
By Lemma \ref{lem:conti.lem} \eqref{enum:par}, it is sufficient to show that $\beta =[b_1,b_2,\ldots ]$ is algebraic of degree $p^k+1$.	
Let $(p_n/q_n)_{n\geq 0}$ be the convergent sequence of $\beta $.
Since
\begin{equation*}
\beta _s=[ab_1^{p^k},a^{-1}b_2^{p^k},\ldots ]=a\beta ^{p^k},
\end{equation*}
it follows from Lemma \ref{lem:conti.lem} \eqref{enum:par} that $\beta $ is a root of the monic polynomial
\begin{equation*}
P(X):=X^{p^k+1}-\frac{p_{s-1}}{q_{s-1}}X^{p^k}+\frac{q_{s-2}}{aq_{s-1}}X-\frac{p_{s-2}}{aq_{s-1}}.
\end{equation*}
Put
\begin{equation*}
Q(X):=X^{p^k}+\left( \beta -\frac{p_{s-1}}{q_{s-1}}\right)\sum_{j=0}^{p^k-1}\beta ^{p^k-1-j}X^j +\frac{q_{s-2}}{aq_{s-1}},
\end{equation*}
then we have $P(X)=(X-\beta )Q(X)$.
For $1\leq i\leq p^k$, let $c_i$ be the coefficient of $X^{p^k-i}$ in $Q(X)$.
By Lemma \ref{lem:conti.lem} \eqref{enum:q} and \eqref{enum:differ}, we deduce that
\begin{gather*}
v(c_i)=(p^k-i+1)\deg b_1+2\sum_{j=2}^{s}\deg b_j\quad (1\leq i\leq p^k-1),\\
v(c_{p^k})=\deg b_s.
\end{gather*}
Therefore, by Lemma \ref{lem:irrcri}, the monic polynomial $Q(X)$ is irreducible over $\lF_q((T^{-1}))$.
Since $\beta \notin \lF_q(T)$, we derive that $P(X)$ is irreducible over $\lF_q(T)$.
\end{proof}

\begin{lem}\label{lem:seq}
Let $d\geq 1$ be an integer and $w>2d-1$ be a rational number.
Write $w=a/b,$ where $a,b\geq 1$ are integers and $a=p^m a',$ where $a'\geq 1, m\geq 0$ are integers and $\gcd(a',p)=1$.
Then there exist a strictly increasing sequence of  positive integers $(k_j)_{j\geq 1}$, a sequence of integers $(n_j)_{j\geq 1}$, and a sequence of rational numbers $(u_j)_{j\geq 1}$ such that for any $j\geq 1, n_j\geq 3, u_j=a_j/b_j,$ where $a_j,b_j \geq 1$ are integers, $ \gcd(a_j,p)=1, p^m | b_j,$ and
\begin{equation}\label{eq:minmax}
2 d-1< \min \left\{ w, u_j , \frac{p^{k_j}}{w u _j ^{n_j-2}} \right\} ,\quad \max \left\{ w, u_j , \frac{p^{k_j}}{w u _j ^{n_j-2}} \right\} =w.
\end{equation}
\end{lem}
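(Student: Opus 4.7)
The plan is to reformulate (\ref{eq:minmax}) as explicit inequalities on $u_j$, to choose $n_j$ and $k_j$ first so that these inequalities are consistent, and then to select a rational $u_j$ of the required arithmetic shape in the resulting real interval. Rewriting $\max\{w, u_j, p^{k_j}/(w u_j^{n_j-2})\} = w$ as $u_j \leq w$ together with $p^{k_j} \leq w^2 u_j^{n_j-2}$, and $\min\{\ldots\} > 2d-1$ as $u_j > 2d-1$ together with $p^{k_j} > (2d-1)w u_j^{n_j-2}$, the requirement becomes
\[
u_j \in (2d-1, w], \qquad u_j^{n_j-2} \in \bigl[\, p^{k_j}/w^2,\; p^{k_j}/((2d-1)w) \,\bigr),
\]
and the latter interval has positive length because $w > 2d-1$.

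Next I translate the arithmetic constraint on $u_j$ into a $p$-adic one. A short check with lowest-terms representations shows that ``$u_j$ admits $u_j = a_j/b_j$ with $a_j,b_j \geq 1$, $\gcd(a_j,p) = 1$, and $p^m \mid b_j$'' is equivalent to $v_p(u_j) \leq -m$, where $v_p$ is the $p$-adic valuation on $\mathbb{Q}$. Such rationals are dense in $(0,\infty)$: for any $N$, among the values $a/(p^m N)$ with $a \in \mathbb{Z}$ only one in every $p$ is excluded by $p \nmid a$, so every open interval of length larger than $p/(p^m N)$ contains such a fraction.

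The main step is the choice of $k_j$ and $n_j$. Taking $(n_j - 2)$-th roots in the target for $u_j^{n_j-2}$ and intersecting with $(2d-1, w]$ yields a subinterval of positive length if and only if
\[
(2d-1)^{n_j-1}\, w < p^{k_j} < w^{n_j},
\]
which in logarithmic form reads $k_j \in I(n_j) := \bigl((n_j-1)\log_p(2d-1) + \log_p w,\; n_j \log_p w\bigr)$. Since $w > 2d-1 \geq 1$, the length $(n_j-1)\log_p(w/(2d-1))$ of $I(n_j)$ tends to infinity. To obtain a strictly increasing integer sequence $(k_j)$, I fix a rational $\lambda$ with $\log_p(2d-1) < \lambda < \log_p w$, write $\lambda = e/f$ in lowest terms, and set $n_j := f(j + J_0)$ with $J_0$ large enough that $n_j \geq 3$ and $\lambda n_j \in I(n_j)$ hold for all $j \geq 1$; then $k_j := \lambda n_j = e(j + J_0)$ is a strictly increasing positive integer lying in $I(n_j)$.

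Finally, with $n_j$ and $k_j$ chosen, the open interval
\[
\bigl(\, \max\{2d-1,\, (p^{k_j}/w^2)^{1/(n_j-2)}\},\; \min\{w,\, (p^{k_j}/((2d-1)w))^{1/(n_j-2)}\} \,\bigr)
\]
is non-empty by construction, and by the density observed above it contains a rational $u_j$ with $v_p(u_j) \leq -m$, giving the desired triple $(k_j, n_j, u_j)$. I expect the main delicate point to be the simultaneous enforcement of integrality of $k_j$, strictness of the inequalities in (\ref{eq:minmax}), and the $p$-adic shape of $u_j$; the device of a rational $\lambda = e/f$ with $n_j$ running through the arithmetic progression $f\mathbb{Z} + fJ_0$ is precisely what makes all three compatible.
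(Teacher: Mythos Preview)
Your argument is correct and follows the same overall route as the paper: both proofs rewrite \eqref{eq:minmax} as the pair of constraints $u_j\in(2d-1,w]$ and $u_j^{\,n_j-2}\in[p^{k_j}/w^2,\,p^{k_j}/((2d-1)w))$, reduce non-emptiness of the resulting interval to $(2d-1)^{n_j-1}w<p^{k_j}<w^{n_j}$, and then appeal to density of rationals of the right $p$-adic shape to locate $u_j$.

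The two proofs differ only in bookkeeping. The paper builds $(n_j,k_j)$ inductively, choosing $n_{j+1}$ large enough that the logarithmic interval $I(n_{j+1})$ has length exceeding a threshold and then picking an integer $k_{j+1}>k_j$ inside it; you instead fix a rational slope $\lambda\in(\log_p(2d-1),\log_p w)$ and take $n_j,k_j$ along the arithmetic progression $(f(j+J_0),e(j+J_0))$, which yields explicit closed formulas for both sequences at once. For the density step, the paper invokes the fact that $\{r^y/p^x:x,y\ge 0\}$ is dense in $\mathbb{R}_{>0}$ (citing Allouche--Shallit), whereas your direct observation that fractions $a/(p^mN)$ with $p\nmid a$ meet every sufficiently short interval is more elementary and self-contained. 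Both variations are harmless; your version has the mild advantage of being fully explicit and avoiding an external reference.
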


\begin{proof}
The proof is by induction on $j$.
By assumption, we take an integer $n_1\geq 3$ with $(w/(2d-1))^{n_1-1}> p$.
Then we have $\log _p w^{n_1}-\log _p w(2d-1)^{n_1-1}>1$.
This implies that there exists an integer $k_1\geq 1$ such that
\begin{equation*}
w (2 d-1)^{n_1-1}<p^{k_1}<w^{n_1}.
\end{equation*}
Then we have
\begin{equation*}
\max \left\{ 2 d-1, \left( \frac{p^{k_1}}{w^2}\right) ^{\frac{1}{n_1-2}}\right\} <\min \left\{ w,\left( \frac{p^{k_1}}{(2 d-1)w} \right) ^{\frac{1}{n_1-2}} \right\} .
\end{equation*}
Let $r\geq 2$ be an integer such that $\gcd(r,p)=1$.
By Lemma 2.5.9 in \cite{Allouche03}, the set $\{ r^y/p^x \mid x,y\in \lZ _{\geq 0} \}$ is dense in $\lR_{>0}$.
Therefore, we can take a rational number $u_1=a_1/b_1$ such that $a_1,b_1 \in \lZ _{>0}, \gcd (a_1,p)=1, p^m | b_1$, and
\begin{equation*}
\max \left\{ 2 d-1, \left( \frac{p^{k_1}}{w^2}\right) ^{\frac{1}{n_1-2}} \right\} <u_1<\min \left\{ w,\left( \frac{p^{k_1}}{(2 d-1)w} \right) ^{\frac{1}{n_1-2}} \right\} .
\end{equation*}
Thus, we have \eqref{eq:minmax} when $j=1$.
Assume that \eqref{eq:minmax} holds for $j=1,\ldots ,i$.
We take an integer $n_{i+1}\geq 3$ with with $(w/(2d-1))^{n_{i+1}-1}> p^{k_i}$.
In a similar way to the above proof, we can take an integer $k_{i+1}$ with $k_i<k_{i+1}$ and a rational number $u_{i+1}$ which satisfy \eqref{eq:minmax}.
This completes the proof.
\end{proof}

Quadratic Laurent series are characterized by continued fractions as follows:

\begin{lem}\label{lem:ult}
Let $\xi =[a_0,a_1,\ldots ]$ be in $\lF _q((T^{-1}))$.
Then $\xi $ is quadratic if and only if $(a_n)_{n\geq 0}$ is ultimately periodic.
\end{lem}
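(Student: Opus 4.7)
The plan is to prove both implications separately, adapting the classical Lagrange argument to the non-archimedean field $\mathbb{F}_q((T^{-1}))$.

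For the sufficiency, suppose $(a_n)_{n \geq 0}$ is ultimately periodic, say $a_{n+s} = a_n$ for all $n \geq t$, for some $t \geq 0$ and $s \geq 1$. Then the complete quotients $\xi_n := [a_n, a_{n+1}, \ldots]$ satisfy $\xi_t = \xi_{t+s}$. Applying Lemma \ref{lem:conti.lem} (v) to the finite continued fraction $[a_t, \ldots, a_{t+s-1}, \xi_{t+s}]$ produces a relation of the form $(c\xi_t + d)\xi_t = a\xi_t + b$ with $a, b, c, d \in \mathbb{F}_q[T]$ and $ad - bc \neq 0$; this is a nontrivial quadratic equation for $\xi_t$ over $\mathbb{F}_q(T)$, since the infinite continued fraction shows $\xi_t \notin \mathbb{F}_q(T)$. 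A second application of Lemma \ref{lem:conti.lem} (v) expresses $\xi$ as a linear fractional transformation of $\xi_t$ with coefficients in $\mathbb{F}_q(T)$, so $\xi$ is quadratic as well.

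For the necessity, assume $A\xi^2 + B\xi + C = 0$ with $A, B, C \in \mathbb{F}_q[T]$ not all zero. Substituting $\xi = (\xi_n p_{n-1} + p_{n-2})/(\xi_n q_{n-1} + q_{n-2})$ from Lemma \ref{lem:conti.lem} (v) and clearing denominators yields $A_n \xi_n^2 + B_n \xi_n + C_n = 0$, where $A_n = A p_{n-1}^2 + B p_{n-1} q_{n-1} + C q_{n-1}^2$, $C_n = A_{n-1}$, and $B_n = 2 A p_{n-1} p_{n-2} + B(p_{n-1} q_{n-2} + p_{n-2} q_{n-1}) + 2 C q_{n-1} q_{n-2}$, together with the discriminant identity $B_n^2 - 4 A_n C_n = B^2 - 4AC$. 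Factoring $A_n = q_{n-1}(p_{n-1} - q_{n-1}\xi)\bigl(A(p_{n-1}/q_{n-1} + \xi) + B\bigr)$ via the quadratic relation and using $|p_{n-1} - q_{n-1}\xi| = 1/|q_n|$ from Lemma \ref{lem:conti.lem} (iv) shows that $|A_n|$ is uniformly bounded in $n$. The same bound holds for $|C_n|$, and the discriminant identity combined with the ultrametric inequality then bounds $|B_n|$ uniformly. (In characteristic $2$, one has $B_n = \pm B$ directly.)

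Since $A_n, B_n, C_n \in \mathbb{F}_q[T]$ have uniformly bounded $T$-degree, only finitely many triples $(A_n, B_n, C_n)$ occur, hence only finitely many values of $\xi_n$ are possible. The pigeonhole principle gives $\xi_n = \xi_m$ for some $n < m$, which forces $a_n = a_m$, $a_{n+1} = a_{m+1}, \ldots$, and hence ultimate periodicity of $(a_n)_{n \geq 0}$. The main technical step is verifying the discriminant identity, which is a routine but lengthy polynomial expansion; otherwise the non-archimedean setting actually streamlines the boundedness arguments compared to the real case, since the ultrametric inequality eliminates the need for delicate archimedean estimates.
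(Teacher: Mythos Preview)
Your proof is correct. The paper itself does not prove this lemma at all: it simply cites Th\'eor\`eme~4 in \cite[Chapitre~IV]{Mathan70} and Theorems~3 and~4 in \cite{Chaichana06}. What you have written is a self-contained adaptation of Lagrange's classical argument to $\lF_q((T^{-1}))$, which is presumably what those references carry out as well. Your treatment of the characteristic~$2$ case (where the cross-term coefficient $B_n$ reduces to $\pm B$ directly, bypassing the discriminant identity) is a nice touch and handles the one place where the real-number proof does not transfer verbatim. Two small points you might tighten if you polish this: in the sufficiency direction, the reason the relation is genuinely quadratic is that the coefficient $c$ equals the convergent denominator $q_{s-1}\neq 0$, not merely that $\xi_t\notin\lF_q(T)$; and in the necessity direction it is worth remarking that $A_n\neq 0$ for all $n$, since $A_n=0$ would force $p_{n-1}/q_{n-1}$ to be a root of the irreducible quadratic defining $\xi$.
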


\begin{proof}
See e.g.\ Th\'{e}or\`{e}me 4 in \cite[CHAPITRE IV]{Mathan70} or Theorems 3 and 4 in \cite{Chaichana06}.
\end{proof}

The following lemma is well-known and immediately seen.

\begin{lem}\label{lem:height}
Let $P(X)$ be in $(\lF _q[T])[X]$.
Assume that $P(X)$ can be factorized as
\begin{equation*}
P(X)=A\prod_{i=1}^{n} (X-\alpha _i),
\end{equation*}
where $A\in \lF _q[T]$ and $\alpha _i \in \overline{\lF _q(T)}$ for $1\leq i\leq n$.
Then we have
\begin{align}\label{eq:heighteq}
H(P)=|A|\prod_{i=1}^{n} \max (1, |\alpha _i|).
\end{align}
Furthermore, for $P(X), Q(X) \in (\lF _q[T])[X]$, we have
\begin{align}\label{eq:heighttwo}
H(P Q)=H(P)H(Q).
\end{align}
\end{lem}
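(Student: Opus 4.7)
The plan is to prove \eqref{eq:heighttwo} first and then derive \eqref{eq:heighteq} from it. The underlying principle is that $|\cdot|$ is a non-archimedean (ultrametric) absolute value on $\lF_q((T^{-1}))$, extended uniquely to $\overline{\lF_q((T^{-1}))}$, so that the coefficient sup-norm $H$ is a multiplicative Gauss norm on the polynomial ring.

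For \eqref{eq:heighttwo}, I would write $P(X) = \sum_i a_i X^i$ and $Q(X) = \sum_j b_j X^j$. The bound $H(PQ) \le H(P)H(Q)$ is immediate from the ultrametric inequality applied coefficient-by-coefficient. For the matching lower bound I would use the standard Gauss-lemma trick: let $i_0$ be the largest index with $|a_{i_0}| = H(P)$ and $j_0$ the largest with $|b_{j_0}| = H(Q)$, and examine the coefficient $c_{i_0+j_0}$ of $X^{i_0+j_0}$ in $PQ$. For any pair $(i,j) \neq (i_0,j_0)$ with $i+j = i_0+j_0$, either $i > i_0$, forcing $|a_i| < H(P)$, or $j > j_0$, forcing $|b_j| < H(Q)$; in either case $|a_i b_j| < H(P)H(Q)$. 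The ultrametric equality case then gives $|c_{i_0+j_0}| = |a_{i_0} b_{j_0}| = H(P)H(Q)$, so $H(PQ) \ge H(P)H(Q)$.

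For \eqref{eq:heighteq}, I would observe that the argument just sketched uses only the ultrametric inequality, so it applies verbatim to polynomials with coefficients in any subring of $\overline{\lF_q((T^{-1}))}$. In particular, multiplicativity of $H$ extends to polynomials over the algebraic closure (embedding $\overline{\lF_q(T)}$ into an algebraic closure of $\lF_q((T^{-1}))$). Applying it inductively to the factorization $P(X) = A \prod_{i=1}^{n}(X - \alpha_i)$, together with the evaluations $H(A) = |A|$ for the constant factor and $H(X - \alpha_i) = \max(1, |\alpha_i|)$ for each linear factor, yields \eqref{eq:heighteq} at once.

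No step here presents a real obstacle; the only point meriting explicit comment is that Gauss-norm multiplicativity automatically lifts to polynomials over $\overline{\lF_q((T^{-1}))}$, which is justified by the fact that the entire proof of \eqref{eq:heighttwo} relies on nothing beyond the ultrametric property of the extended absolute value.
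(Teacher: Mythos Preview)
Your proof is correct; this is the standard Gauss-norm argument and every step goes through. The paper itself omits the proof entirely, stating only that the lemma is ``well-known and immediately seen,'' so your write-up supplies precisely the details one would expect.
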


Let $\alpha \in \overline{\lF _q(T)}$ be a quadratic number.
If $\insep \alpha =1$, let $\alpha ' \neq \alpha $ be the Galois conjugate of $\alpha $.
If $\insep \alpha =2$, let $\alpha ' =\alpha $.

\begin{lem}\label{lem:Galois}
Let $\alpha \in \overline{\lF _q(T)}$ be a quadratic number.
If $\alpha \neq \alpha '$, then we have
\begin{equation*}
|\alpha -\alpha '|\geq H(\alpha )^{-1}.
\end{equation*}
\end{lem}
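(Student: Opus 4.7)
The plan is to bound $|\alpha-\alpha'|$ from below via the discriminant of the minimal polynomial of $\alpha$, and then combine this estimate with the height formula in Lemma \ref{lem:height}. Let $P(X)=AX^2+BX+C\in (\lF_q[T])[X]_{\min}$ be the minimal polynomial of $\alpha$; since $\alpha\neq\alpha'$, $P$ is a genuine irreducible separable quadratic and factors as $P(X)=A(X-\alpha)(X-\alpha')$ over $\overline{\lF_q(T)}$.

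First I would invoke the standard discriminant identity $A^2(\alpha-\alpha')^2 = B^2-4AC = \Disc(P)$, which shows that $A^2(\alpha-\alpha')^2$ lies in $\lF_q[T]$. Because $\alpha\neq\alpha'$, this element is a nonzero polynomial in $T$, hence its absolute value is at least $1$, and therefore $|\alpha-\alpha'|\geq |A|^{-1}$. Next, Lemma \ref{lem:height} applied to $P$ gives $H(\alpha)=H(P)=|A|\max(1,|\alpha|)\max(1,|\alpha'|)\geq |A|$. Chaining the two inequalities yields $|\alpha-\alpha'|\geq |A|^{-1}\geq H(\alpha)^{-1}$, which is the conclusion.

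The main (and essentially only) subtlety is ensuring that the discriminant argument is characteristic-free: in characteristic $2$ the expression $B^2-4AC$ collapses to $B^2$, but the assumption $\alpha\neq\alpha'$ is exactly the statement that $P$ is separable, which forces $B\neq 0$ in that case, so the bound $|\alpha-\alpha'|\geq|A|^{-1}$ still goes through. Beyond this routine verification there is no real obstacle, as the whole argument reduces to the elementary fact that any nonzero element of $\lF_q[T]$ has absolute value at least $1$.
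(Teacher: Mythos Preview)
Your proof is correct and follows exactly the discriminant approach the paper itself indicates, with the added bonus that you spell out the characteristic~$2$ case explicitly. The only cosmetic remark is that you do not actually need Lemma~\ref{lem:height} to get $H(\alpha)\geq |A|$, since $H(P)=\max(|A|,|B|,|C|)\geq |A|$ is immediate from the definition.
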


\begin{proof}
This is clear by using the discriminant of the minimal polynomial of $\alpha $ (see e.g.\ \cite[Appendix A]{Cassels86} for the definition and properties of the discriminant).
We refer to Lemma 3.5 in \cite{Ooto17} for a direct proof.
\end{proof}

We recall the Liouville inequalities for Laurent series over a finite field.

\begin{lem}\label{lem:Lio.inequ1}
Let $ P(X) \in (\lF_q[T])[X]$ be a non-constant polynomial of degree $m$ and $\alpha \in \overline{\lF _q(T)}$ be a number of degree $n$.
Assume that $P(\alpha )\not= 0$.
Then we have
\begin{equation*}
|P(\alpha )|\geq H(P)^{-n+1} H(\alpha )^{-m}.
\end{equation*}
\end{lem}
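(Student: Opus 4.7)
The plan is to run the classical resultant-based Liouville argument, adapted to the non-archimedean absolute value on $\lF_q((T^{-1}))$. Let $Q(X)\in (\lF_q[T])[X]_{\min}$ be the minimal polynomial of $\alpha$, so $\deg Q=n$ and $H(Q)=H(\alpha)$. Write $Q(X)=A\prod_{i=1}^{n}(X-\alpha_i)$ over $\overline{\lF_q(T)}$ (counted with multiplicity, with $\alpha_1=\alpha$), and $P(X)=B\prod_{j=1}^{m}(X-\beta_j)$, where $A,B$ are the respective leading coefficients. Since $Q$ is irreducible in $\lF_q(T)[X]$ and $Q\nmid P$ (otherwise $P(\alpha)=0$), we have $\gcd(P,Q)=1$ in $\lF_q(T)[X]$, so the resultant $\Res(P,Q)$ is a nonzero element of $\lF_q[T]$, whence $|\Res(P,Q)|\geq 1$.

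Next, I would use the identity $\Res(P,Q)=\pm A^{m}\prod_{i=1}^{n}P(\alpha_i)$, which combined with $|\Res(P,Q)|\geq 1$ yields
$$1\leq |A|^{m}\,|P(\alpha)|\prod_{i=2}^{n}|P(\alpha_i)|.$$
For each $i\geq 2$, I would bound $|P(\alpha_i)|$ by applying the ultrametric inequality $|\alpha_i-\beta_j|\leq \max(1,|\alpha_i|)\max(1,|\beta_j|)$ together with Lemma \ref{lem:height} for $P$, obtaining $|P(\alpha_i)|\leq \max(1,|\alpha_i|)^{m}H(P)$. Plugging this in and then invoking Lemma \ref{lem:height} for $Q$ gives $|A|\prod_{i=1}^{n}\max(1,|\alpha_i|)=H(\alpha)$, so the factor $|A|^{m}\prod_{i=2}^{n}\max(1,|\alpha_i|)^{m}$ is bounded above by $H(\alpha)^{m}$, and the inequality collapses to
$$1\leq H(\alpha)^{m}\,H(P)^{n-1}\,|P(\alpha)|,$$
which is precisely the desired bound $|P(\alpha)|\geq H(P)^{-n+1}H(\alpha)^{-m}$.

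The only step requiring care is the inseparable case, where the roots $\alpha_i$ of $Q$ appear with repetition; the resultant identity and Lemma \ref{lem:height} are both valid once roots are counted with multiplicity, so the two applications of Lemma \ref{lem:height} match up. The coprimality $\gcd(P,Q)=1$ still follows from the irreducibility of $Q$ and $Q\nmid P$ in all cases, so no separability hypothesis is needed. Beyond this bookkeeping, the argument is the standard translation of the classical Liouville inequality into the function-field setting and uses nothing more than Lemma \ref{lem:height} and the ultrametric property of $|\cdot|$.
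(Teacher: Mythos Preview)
Your proof is correct and complete. The paper itself does not prove Lemma~\ref{lem:Lio.inequ1} but only cites Lemma~4 in \cite{Muller93} and Proposition~3.2 in \cite{Ooto17}; the resultant-based argument you give is exactly the standard method used in those references, so your approach coincides with the intended one.
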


\begin{proof}
See e.g.\ Lemma 4 in \cite{Muller93} or Proposition 3.2 in \cite{Ooto17}.
\end{proof}

\begin{lem}\label{lem:Lio.inequ2}
Let $\alpha ,\beta \in \overline{\lF _q(T)}$ be distinct numbers of degree $m$ and $n$, respectively.
Then we have
\begin{equation*}
|\alpha -\beta |\geq H(\alpha )^{-n} H(\beta )^{-m}.
\end{equation*}
\end{lem}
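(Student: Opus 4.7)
The plan is to construct a nonzero polynomial in $T$ whose absolute value is therefore at least $1$, unpack it via the product formula \eqref{eq:heighteq}, and isolate $|\alpha -\beta |$. Which polynomial to use depends on whether $\alpha $ and $\beta $ are Galois conjugate, so the argument splits into two cases.

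Let $P_{\alpha },P_{\beta }\in (\lF_q[T])[X]_{\min}$ be the minimal polynomials of $\alpha $ and $\beta $, with leading coefficients $A,B\in \lF_q[T]$ and conjugate sets $\{ \alpha _1=\alpha ,\ldots ,\alpha _m\} $ and $\{ \beta _1=\beta ,\ldots ,\beta _n\}$. If $P_{\alpha }\neq P_{\beta }$, then---both being irreducible in $\lF_q(T)[X]$ with leading coefficients monic in $T$---they cannot share a root, so
\begin{equation*}
\Res(P_{\alpha },P_{\beta })=A^nB^m\prod_{i,j}(\alpha _i-\beta _j)\in \lF_q[T]\setminus \{ 0\} ,
\end{equation*}
and its absolute value is therefore at least $1$. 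Bounding the factors with $(i,j)\neq (1,1)$ by $|\alpha _i-\beta _j|\leq \max (1,|\alpha _i|)\max (1,|\beta _j|)$ and substituting $|A|\prod _i\max (1,|\alpha _i|)=H(\alpha )$ (and similarly for $\beta $) via Lemma \ref{lem:height}, I obtain
\begin{equation*}
1\leq |\alpha -\beta |\cdot \frac{H(\alpha )^nH(\beta )^m}{\max (1,|\alpha |)\max (1,|\beta |)},
\end{equation*}
which is even slightly stronger than what is claimed.

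If instead $P_{\alpha }=P_{\beta }$ (so $m=n$ and $H(\alpha )=H(\beta )$), I would treat the separable case by replacing the resultant with $\Disc(P_{\alpha })=A^{2m-2}\prod_{i<j}(\alpha _i-\alpha _j)^2\in \lF_q[T]\setminus \{ 0\}$ and running the same kind of ultrametric bookkeeping, isolating the factor $|\alpha -\beta |^2$ and estimating the rest by heights to get $|\alpha -\beta |\geq H(\alpha )^{-(m-1)}$, which dominates the required $H(\alpha )^{-2m}$ since $H(\alpha )\geq 1$. In the inseparable case I would write $P_{\alpha }(X)=R(X^{p^s})$ with $R$ separable of degree $m'=m/p^s$, observe that $\alpha ^{p^s}\neq \beta ^{p^s}$ are distinct roots of $R$ with $H(R)=H(\alpha )$, apply the separable case to this pair, and then use the characteristic-$p$ identity $|\alpha -\beta |^{p^s}=|\alpha ^{p^s}-\beta ^{p^s}|$ to translate the bound back. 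The main obstacle is precisely this inseparable reduction: one must confirm the compatibility of the extended absolute value with Frobenius and verify that the exponent left over after taking $p^s$-th roots is still admissible, which amounts to the routine estimate $(m'-1)/p^s\leq 2m$.
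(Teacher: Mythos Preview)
The paper does not actually prove this lemma; it simply cites Korollar~3 of M\"uller and Proposition~3.4 of \cite{Ooto17}. Your resultant/discriminant argument is the standard route taken in those references, and it is correct in all three cases. The only point you flag as an obstacle---compatibility of the extended absolute value with Frobenius---is immediate: the unique extension of $|\cdot |$ to $\overline{\lF _q(T)}$ is multiplicative, so $|\alpha -\beta |^{p^s}=|(\alpha -\beta )^{p^s}|=|\alpha ^{p^s}-\beta ^{p^s}|$, and the residual exponent check $(m'-1)/p^s\leq 2m$ is trivial since $m'\leq m$. One small bookkeeping remark: in the inseparable reduction you implicitly use that $R\in (\lF _q[T])[X]_{\min }$ with $H(R)=H(\alpha )$; this holds because the nonzero coefficients of $R$ coincide with those of $P_{\alpha }$, so primitivity, irreducibility, and the height all transfer.
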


\begin{proof}
See e.g.\ Korollar 3 in \cite{Muller93} or Proposition 3.4 in \cite{Ooto17}.
\end{proof}

The lemma below is an immediate consequence of Lemmas \ref{lem:Lio.inequ1} and \ref{lem:Lio.inequ2}.

\begin{lem}\label{lem:alg}
Let $n\geq 1$ be an integer and $\xi $ be an algebraic Laurent series of degree $d$.
Then we have
\begin{equation*}
w_n(\xi ), w_n ^{*}(\xi )\leq d-1.
\end{equation*}
\end{lem}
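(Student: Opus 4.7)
The plan is to deduce each of the two bounds directly from the corresponding Liouville inequality, using the fact that the height $H(\xi)$ of an algebraic Laurent series is a fixed (finite) quantity.

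For the bound on $w_n(\xi)$, I would take any $P(X)\in(\lF_q[T])[X]$ of degree $m\leq n$ with $P(\xi)\neq 0$ (which is the only case that enters the definition of $w_n$). Applying Lemma \ref{lem:Lio.inequ1} to $P$ and $\xi$ (of degree $d$) gives
\begin{equation*}
|P(\xi)|\geq H(P)^{-d+1}H(\xi)^{-m}\geq H(\xi)^{-n}\,H(P)^{-d+1}.
\end{equation*}
Since $H(\xi)^{-n}$ is a positive constant depending only on $\xi$ and $n$, combining this with the defining inequality $0<|P(\xi)|\leq H(P)^{-w}$ forces $H(P)^{d-1-w}\geq H(\xi)^{-n}$; as $H(P)$ can be made arbitrarily large along any infinite sequence of polynomials, we must have $w\leq d-1$. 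Taking the supremum yields $w_n(\xi)\leq d-1$.

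For the bound on $w_n^{*}(\xi)$, I would apply Lemma \ref{lem:Lio.inequ2} to $\xi$ and any algebraic $\alpha\in\overline{\lF_q(T)}$ of degree $m\leq n$ with $\alpha\neq\xi$ (the case $\alpha=\xi$ is excluded by the strict inequality $|\xi-\alpha|>0$). This gives
\begin{equation*}
|\xi-\alpha|\geq H(\xi)^{-m}H(\alpha)^{-d}\geq H(\xi)^{-n}\,H(\alpha)^{-d}.
\end{equation*}
Combined with $0<|\xi-\alpha|\leq H(\alpha)^{-w^{*}-1}$, this forces $H(\alpha)^{d-w^{*}-1}\geq H(\xi)^{-n}$, and the existence of infinitely many such $\alpha$ with arbitrarily large $H(\alpha)$ forces $w^{*}\leq d-1$, hence $w_n^{*}(\xi)\leq d-1$.

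There is no real obstacle here; the argument is a textbook application of the Liouville bounds. The only thing to be mindful of is the normalization: one must ensure that the quantities $H(P)$ and $H(\alpha)$ that arise in the two Liouville lemmas coincide with those used in the definitions of $w_n$ and $w_n^{*}$, which they do thanks to the conventions fixed in Section \ref{sec:mainresult}. Both statements follow by absorbing the constant $H(\xi)^{-n}$ into the asymptotic estimate and letting $H(P)$ (respectively $H(\alpha)$) tend to infinity.
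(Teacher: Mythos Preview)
Your proposal is correct and matches the paper's approach exactly: the paper states that the lemma ``is an immediate consequence of Lemmas~\ref{lem:Lio.inequ1} and~\ref{lem:Lio.inequ2},'' and you have spelled out precisely this deduction. The only implicit point you use, that infinitely many $P$ (respectively $\alpha$) of bounded degree forces $H(P)\to\infty$ (respectively $H(\alpha)\to\infty$), is standard since there are only finitely many such objects of bounded height over $\lF_q[T]$.
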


We give a key lemma for the proof of Theorems \ref{thm:mainconti1} and \ref{thm:mainconti2} as follows:

\begin{lem}\label{lem:bestquad}
Let $d\geq 2$ be an integer.
Let $\xi $ be in $\lF_q((T^{-1}))$ and $\theta ,\rho ,\delta $ be positive numbers.
Assume that there exists a sequence $(\alpha _j)_{j\geq 1}$ such that for any $j\geq 1$, $\alpha _j\in \overline{\lF _q(T)}$ is quadratic, $(H(\alpha _j))_{j\geq 1}$ is a divergent increasing sequence, and
\begin{gather*}
\limsup_{j\rightarrow \infty } \frac{\log H(\alpha _{j+1})}{\log H(\alpha _j)}\leq \theta ,\\
d+\delta \leq \liminf_{j\rightarrow \infty } \frac{-\log |\xi -\alpha _j|}{\log H(\alpha _j)},\quad \limsup_{j\rightarrow \infty } \frac{-\log |\xi -\alpha _j|}{\log H(\alpha _j)}\leq d+\rho .
\end{gather*}
If $2d\theta \leq (d-2+\rho )\delta $, then we have for all $2\leq n\leq d$,
\begin{equation}\label{eq:w_2;1}
d-1+\delta \leq w_n ^{*}(\xi )\leq d-1+\rho .
\end{equation}
Furthermore, assume that there exist a non-negative number $\varepsilon $ and a positive number $c$ such that for any $j\geq 1, 0<|\alpha _j-\alpha _j '|\leq c$ and
\begin{equation}\label{eq:ipu}
\limsup_{j\rightarrow \infty }\frac{-\log |\alpha _j -\alpha _j '|}{\log H(\alpha _j)}\geq \varepsilon .
\end{equation}
If $2d\theta \leq (d-2+\delta )\delta $, then we have for all $2\leq n\leq d$,
\begin{equation}\label{eq:w_2;2}
d-1+\delta \leq w_n ^{*}(\xi )\leq d-1+\rho ,\quad \varepsilon \leq w_n(\xi )-w_n ^{*}(\xi ).
\end{equation}
Finally, assume that there exists a non-negative number $\chi $ such that
\begin{equation}\label{eq:kai}
\limsup_{i\rightarrow \infty} \frac{-\log |\alpha _i-\alpha _i '|}{\log H(\alpha _i)}\leq \chi .
\end{equation}
Then we have for all $2\leq n\leq d$,
\begin{equation}\label{eq:w_2;3}
d-1+\delta \leq w_n ^{*}(\xi )\leq d-1+\rho ,\quad \varepsilon \leq w_n(\xi )-w_n ^{*}(\xi )\leq \chi .
\end{equation}
\end{lem}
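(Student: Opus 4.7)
The plan is to mimic Lemma \ref{lem:bestrational}, with the quadratic approximations $\alpha_j$ taking the role of the rational convergents $p_j/q_j$ and with the Liouville inequalities (Lemmas \ref{lem:Lio.inequ1} and \ref{lem:Lio.inequ2}) replacing the trivial separation $|p/q - p'/q'|\geq 1/(|q||q'|)$ of rationals. Monotonicity of $w_n^*$ and $w_n$ in $n$ will reduce every upper bound to the case $n=d$. Throughout I fix a small $\iota>0$; for $j$ large the hypotheses give $H(\alpha_{j+1})\leq H(\alpha_j)^{\theta+\iota}$ together with $H(\alpha_j)^{-(d+\rho+\iota)}\leq|\xi-\alpha_j|\leq H(\alpha_j)^{-(d+\delta-\iota)}$.

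First I would note that $w_n^*(\xi)\geq d-1+\delta$ is immediate, since each $\alpha_j$ has degree $\leq 2\leq n$, $H(\alpha_j)\to\infty$, and the definition of $w_n^*$ reads off the bound from the upper approximation hypothesis. For the matching upper bound, take $\beta\in\overline{\lF_q(T)}$ of degree $m\leq d$ with minimal polynomial $P$, and suppose $|\xi-\beta|\leq H(\beta)^{-(w+1)}$. Lemma \ref{lem:poly} gives $|P(\xi)|\ll H(\beta)^{-w}$, and when $P(\alpha_j)\neq 0$ Lemma \ref{lem:Lio.inequ1} gives $|P(\alpha_j)|\geq H(\beta)^{-1}H(\alpha_j)^{-m}$. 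Choosing $j$ minimal so that $H(\alpha_j)^{d+\delta-\iota-m}>c\,H(\beta)^2$ forces, via the ultrametric, $|P(\xi)|=|P(\alpha_j)|$; inserting $H(\alpha_j)\leq H(\alpha_{j-1})^{\theta+\iota}$ and letting $\iota\to 0$ yields $w\leq 1+2m\theta/(d+\delta-m)$, maximal at $m=d$ as $w\leq 1+2d\theta/\delta$. The hypothesis $2d\theta\leq(d-2+\rho)\delta$ converts this into $w_n^*(\xi)\leq d-1+\rho$, establishing \eqref{eq:w_2;1}. Under the strengthened $2d\theta\leq(d-2+\delta)\delta$ the same chain reads $w_n^*(\xi)\leq d-1+\delta$, forcing $w_n^*(\xi)=d-1+\delta$ exactly.

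Next I would construct test polynomials for the lower bound $\varepsilon\leq w_n(\xi)-w_n^*(\xi)$. Let $P_j$ be the minimal polynomial of $\alpha_j$, so $P_j(\xi)=a_j(\xi-\alpha_j)(\xi-\alpha_j')$. Since $|\xi-\alpha_j|\to 0$ keeps $|\alpha_j|$ bounded and $|\alpha_j-\alpha_j'|\leq c$, Lemma \ref{lem:height} gives $|a_j|\asymp H(P_j)=H(\alpha_j)$, and the ultrametric gives $|\xi-\alpha_j'|=|\alpha_j-\alpha_j'|$ for $j$ large. Hence
\[
|P_j(\xi)|\asymp H(\alpha_j)\cdot|\xi-\alpha_j|\cdot|\alpha_j-\alpha_j'|.
\]
Combining $|\xi-\alpha_j|\leq H(\alpha_j)^{-(d+\delta-\iota)}$ (for all large $j$) with $|\alpha_j-\alpha_j'|\leq H(\alpha_j)^{-(\varepsilon-\iota)}$ (for infinitely many $j$, from \eqref{eq:ipu}) produces infinitely many $P_j$ with $|P_j(\xi)|\leq H(P_j)^{-(d-1+\delta+\varepsilon-3\iota)}$, so $w_n(\xi)\geq d-1+\delta+\varepsilon$, and combined with $w_n^*(\xi)=d-1+\delta$ this yields the claimed gap.

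Finally, for the upper bound $w_n(\xi)-w_n^*(\xi)\leq\chi$, I would restrict to $P\in(\lF_q[T])[X]_{\min}$ via Lemma \ref{lem:weak} and split into cases. If $P\neq P_j$ for every $j$, the argument above, applied to $|P(\xi)|$ directly, gives $|P(\xi)|\geq H(P)^{-(1+2d\theta/\delta)+o(1)}$ and contributes at most $d-1+\delta$ to $w_n$ under the strong hypothesis. If $P=P_j$, the product formula together with the lower bounds $|\xi-\alpha_j|\geq H(\alpha_j)^{-(d+\delta+\iota)}$ (valid for large $j$ in the regime where the approximation exponent is tight) and $|\alpha_j-\alpha_j'|\geq H(\alpha_j)^{-(\chi+\iota)}$ (from \eqref{eq:kai}) gives $|P_j(\xi)|\geq H(P_j)^{-(d-1+\delta+\chi+3\iota)}$, bounding that contribution by $d-1+\delta+\chi$. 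These combine to $w_n(\xi)\leq d-1+\delta+\chi$, hence $w_n(\xi)-w_n^*(\xi)\leq\chi$. The hard part is the choice of $j$ in the upper-bound step: $j$ must be tuned so that the Liouville lower bound on $|P(\alpha_j)|$ strictly dominates the secant bound $H(P)\cdot|\xi-\alpha_j|$, and this balance is precisely what forces the quantitative hypotheses on $\theta$, $\delta$, and $\rho$.
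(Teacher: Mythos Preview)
Your outline for \eqref{eq:w_2;1} is essentially the paper's argument, just routed through the minimal polynomial $P$ of $\beta$ and Lemma~\ref{lem:Lio.inequ1} rather than directly through Lemma~\ref{lem:Lio.inequ2}; either packaging yields the same bound $w_d^{*}(\xi)\leq\max\bigl(d-1+\rho,\ 1+2d\theta/\delta\bigr)$.

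The gap is in the second and third parts. You write that under the stronger hypothesis $2d\theta\leq(d-2+\delta)\delta$ ``the same chain reads $w_n^{*}(\xi)\leq d-1+\delta$, forcing $w_n^{*}(\xi)=d-1+\delta$ exactly.'' This is not correct: the chain only controls the contribution of approximants $\beta\notin\{\alpha_j\}$, while the $\alpha_j$ themselves contribute $\limsup_j\frac{-\log|\xi-\alpha_j|}{\log H(\alpha_j)}-1$, which by hypothesis may be anywhere in $[d-1+\delta,\,d-1+\rho]$. So you have not pinned $w_n^{*}(\xi)$ at $d-1+\delta$, and your deduction $w_n(\xi)-w_n^{*}(\xi)\geq\varepsilon$ from $w_2(\xi)\geq d-1+\delta+\varepsilon$ does not follow. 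The same slip recurs in your $\chi$ step, where you invoke $|\xi-\alpha_j|\geq H(\alpha_j)^{-(d+\delta+\iota)}$; only $|\xi-\alpha_j|\geq H(\alpha_j)^{-(d+\rho+\iota)}$ is available, and with that weaker bound your estimate for $|P_j(\xi)|$ gives only $w_n-w_n^{*}\leq\rho-\delta+\chi$.

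What the paper does instead is to extract from \eqref{eq:llow} the stronger conclusion that the $\alpha_j$ are the \emph{best} degree-$\leq d$ approximations, i.e.\ $w_d^{*}(\xi)=\limsup_j\frac{-\log|\xi-\alpha_j|}{\log H(\alpha_j)}-1$ (equation \eqref{eq:llow2}), and that $w_2^{*}(\xi)=\cdots=w_d^{*}(\xi)$. This identification of $w_n^{*}$ with the actual limsup along $(\alpha_j)$ is what lets one pass from the product formula \eqref{eq:llow3} to the gap estimates: for the $\chi$ upper bound one uses the subadditivity $\limsup(A_j+B_j)\leq\limsup A_j+\limsup B_j$ together with $\limsup A_j-1=w_2^{*}(\xi)$ to get $w_d(\xi)\leq w_2^{*}(\xi)+\chi$ directly. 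Your sketch is missing this ``best approximation'' step; once you insert it, the rest of your plan goes through.
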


\begin{proof}
Let $0<\iota <\delta $ be a real number.
Then there exists a positive integer $c_0$ such that
\begin{equation*}
\frac{1}{H(\alpha _j)^{d+\rho +\iota }} \leq |\xi -\alpha _j|\leq \frac{1}{H(\alpha _j)^{d+\delta -\iota }},\quad H(\alpha _j)\leq H(\alpha _{j+1})\leq H(\alpha _j)^{\theta +\iota }
\end{equation*}
for all $j\geq c_0$.
Since $\iota $ is arbitrary, we have $w_2 ^{*}(\xi )\geq d-1+\delta $.
Let $\alpha \in \overline{\lF _q(T)} \setminus \{\alpha _j \mid j\geq 1 \}$ be an algebraic number of degree at most $d$ with $H(\alpha )\geq H(\alpha _{c_0})^{\frac{\delta }{2 \theta }}$.

Assume that $2d\theta \leq (d-2+\rho )\delta $.
We define an integer $j_0\geq c_0$ such that $H(\alpha _{j_0})\leq H(\alpha )^{\frac{2(\theta +\iota )}{\delta -\iota }}<H(\alpha _{j_0 +1})$.
Since
\begin{equation*}
H(\alpha )<H(\alpha _{j_0 +1})^{\frac{\delta -\iota }{2(\theta +\iota )}}\leq H(\alpha _{j_0})^{\frac{\delta -\iota }{2}},
\end{equation*}
we obtain
\begin{equation*}
|\alpha -\alpha _{j_0}|\geq H(\alpha )^{-2}H(\alpha _{j_0})^{-d}>H(\alpha _{j_0})^{-d-\delta +\iota }\geq |\xi -\alpha _{j_0}|
\end{equation*}
by Lemma \ref{lem:Lio.inequ2}.
We derive that
\begin{equation}\label{eq:llow}
|\xi -\alpha |=|\alpha -\alpha _{j_0}|\geq H(\alpha )^{-2}H(\alpha _{j_0})^{-d}\geq H(\alpha )^{-2-\frac{2 d(\theta +\iota )}{\delta -\iota }},
\end{equation}
which implies
\begin{equation*}
w_d ^{*}(\xi )\leq \max \left( d-1+\rho +\iota , 1+\frac{2 d(\theta +\iota )}{\delta -\iota } \right) .
\end{equation*}
Since $\iota $ is arbitrarily small, \eqref{eq:w_2;1} follows.

Next, we assume that $2d\theta \leq (d-2+\delta )\delta $ and there exist a non-negative number $\varepsilon $ and a positive number $c$ such that for any $j\geq 1, 0<|\alpha _j-\alpha _j '|\leq c$ and \eqref{eq:ipu} holds.
Since $\delta \leq \rho $, we have \eqref{eq:w_2;1}.
By the assumption and \eqref{eq:llow}, the sequence $(\alpha _j)_{j\geq 1}$ is the best approximation to $\xi $ of degree at most $d$, that is,
\begin{equation}\label{eq:llow2}
w_d ^{*}(\xi )=\limsup _{j\rightarrow \infty }\frac{-\log |\xi -\alpha _j|}{\log H(\alpha _j)}-1.
\end{equation}
Therefore, we have $w_2 ^{*}(\xi )=\ldots =w_d ^{*}(\xi )$.
In what follows, we show that $\varepsilon \leq w_n(\xi )-w_n ^{*}(\xi )$ for all $2\leq n\leq d$.
For any $j\geq 1$, we denote by $P_j(X)=A_j(X-\alpha _j)(X-\alpha _j ')$ the minimal polynomial of $\alpha _j$.
Since $|\xi -\alpha _j|\leq 1$ and $|\alpha _j-\alpha _j '|\leq c$ for $j\geq c_0$, we have
\begin{equation*}
\max (1,|\xi |)\asymp \max (1,|\alpha _j|)\asymp \max (1,|\alpha _j '|)
\end{equation*}
for $j\geq c_0$.
It follows from Lemma \ref{lem:height} that $H(P_j)\asymp |A_j|$ for $j\geq c_0$.
By Lemma \ref{lem:Galois}, we have $|\xi -\alpha _j|<|\alpha _j-\alpha _j '|$ for $j\geq c_0$, which implies $|\xi -\alpha _j '|=|\alpha _j -\alpha _j '|$ for $j\geq c_0$.
Hence, we have
\begin{equation}\label{eq:llow3}
|P_j (\xi )|\asymp H(P_j)|\xi -\alpha _j||\alpha _j-\alpha _j '|
\end{equation} 
for $j\geq c_0$.
It follows from \eqref{eq:llow2} and \eqref{eq:llow3} that $w_d ^{*}(\xi )+\varepsilon \leq w_2(\xi )$.
Thus, we have $\varepsilon \leq w_n(\xi )-w_n ^{*}(\xi )$ for all $2\leq n\leq d$.

Finally, we assume \eqref{eq:kai}.
By \eqref{eq:llow3}, we obtain
\begin{equation*}
\limsup _{j\rightarrow \infty} \frac{-\log |P_j(\xi )|}{\log H(P_j)}\leq w_2 ^{*}(\xi )+\chi .
\end{equation*}
Recall that $C_1(\alpha ,C,d)$ is defined in Lemma \ref{lem:poly}.
Put $c_1:=\max _{1\leq i\leq d}C_1(\xi ,1,i)$.
Let $P(X)\in (\lF _q[T])[X]_{\min }$ be a polynomial of degree at most $d$ with $H(P)\geq c_1 ^{-\frac{1}{2}}H(\alpha _{c_0})^{\frac{\delta }{2\theta }}$ and $P(\alpha _j)\neq 0$ for all $j\geq 1$.
We define an integer $j_1\geq c_0$ such that $H(\alpha _{j_1})\leq (c_1 H(P)^2)^{\frac{\theta +\iota }{\delta -\iota }}<H(\alpha _{j_1+1})$.
Since
\begin{equation*}
H(P)<c_1 ^{-\frac{1}{2}}H(\alpha _{j_1+1})^{\frac{\delta -\iota }{2(\theta +\iota )}}\leq c_1 ^{-\frac{1}{2}}H(\alpha _{j_1})^{\frac{\delta -\iota }{2}},
\end{equation*} 
we have
\begin{equation*}
|P(\xi )-P(\alpha _{j_1})|
\leq c_1 H(P)|\xi -\alpha _{j_1}|
< H(\alpha _{j_1})^{-d}H(P)^{-1}
\leq |P(\alpha _{j_1})|
\end{equation*}
by Lemmas \ref{lem:poly} and \ref{lem:Lio.inequ1}.
Therefore, we obtain
\begin{equation*}
|P(\xi )|=|P(\alpha _{j_1})|\geq H(\alpha _{j_1})^{-d}H(P)^{-1} \geq c_1 ^{-\frac{d(\theta +\iota )}{\delta -\iota }}H(P)^{-1-\frac{2 d(\theta +\iota )}{\delta -\iota }}.
\end{equation*}
Hence, we get 
\begin{equation*}
w_d(\xi )\leq \max \left( w_2 ^{*}(\xi )+\chi , 1+\frac{2 d(\theta +\iota )}{\delta -\iota } \right) 
\end{equation*}
by Lemma \ref{lem:weak}.
Since $\iota $ is arbitrarily small, we have $w_d(\xi )\leq w_2 ^{*}(\xi )+\chi $.
This completes the proof.
\end{proof}

\section{Proof of main results}\label{sec:mainproof}

\begin{proof}[Proof of Theorem \ref{thm:mainalg}]
We take a strictly increasing sequence of  positive integers $(k_j)_{j\geq 1}$, a sequence of integers $(n_j)_{j\geq 1}$, and a sequence of rational numbers $(u_j)_{j\geq 1}$ as in Lemma \ref{lem:seq}.
For $j\geq 1$, we put
\begin{gather*}
d_{1,j}:=\frac{b_j ^{n_j-2}(a-b)}{p^m}, \quad d_{i,j}:=\frac{a a_j ^{i-2}b_j ^{n_j-i-1}(a_j-b_j)}{p^m} \quad (2\leq i\leq n_j-1), \\
d_{n_j,j}:=\frac{p^{k_j}b b_j ^{n_j-2}-a a_j ^{n_j-2}}{p^m}.
\end{gather*}
Then we have $d_{i,j}\in \lZ _{>0}$ and $\gcd(d_{n_j,j},p)=1$ for all $j\geq 1, 1\leq i\leq n_j$.
Now we take polynomials $A_{1,j},\ldots ,A_{n_j,j}\in \lF _q[T]$ with $\deg A_{i,j}=d_{i,j}$.
Put
\begin{equation*}
\xi _j:=[A_{1,j},\ldots ,A_{n_j,j},A_{1,j}^{p^{k_j}},\ldots ,A_{n_j,j}^{p^{k_j}},A_{1,j}^{p^{2 k_j}},\ldots ]\in \lF _q((T^{-1}))
\end{equation*}
and let $(p_{n,j}/q_{n,j})_{n\geq 0}$ be the convergent sequence of $\xi _j$.
By Theorem \ref{thm:classIAiff}, $\xi _j$ is of Class IA.
Therefore, by Lemma \ref{lem:classiadegree}, we deduce that $\xi _j$ is algebraic of degree $p^{k_j}+1$.
For $1\leq i\leq n_j$, we put
\begin{equation*}
r_{i,j}:=\frac{d_{i,j}}{p^{k_j}(\sum _{\ell =1}^{i-1}d_{\ell ,j})+\sum _{\ell =i}^{n_j}d_{\ell ,j}}.
\end{equation*}
Then a straightforward computation shows that
\begin{gather*}
r_{1,j}=\frac{a-b}{(p^{k_j}-1)b},\quad r_{i,j}=\frac{a_j-b_j}{(p^{k_j}-1)b_j}\quad (2\leq i\leq n_j-1), \\
r_{n_j,j}=\frac{p^{k_j}b b_j ^{n_j-2}-a a_j ^{n_j-2}}{(p^{k_j}-1)a a_j ^{n_j-2}}.
\end{gather*}
By Theorem \ref{thm:supinf} and Lemma \ref{lem:seq}, we obtain
\begin{equation*}
\limsup _{n\rightarrow \infty }\frac{\deg q_{n+1,j}}{\deg q_{n,j}}=w,\quad \liminf _{n\rightarrow \infty }\frac{\deg q_{n+1,j}}{\deg q_{n,j}}>2 d-1.
\end{equation*}
It follows from Proposition \ref{prop:conti.w_d} that
\begin{equation*}
w_1(\xi _j)=w_1 ^{*}(\xi _j)=\ldots =w_d(\xi _j)=w_d ^{*}(\xi _j)=w.
\end{equation*}
This completes the proof. 
\end{proof}

\begin{proof}[Proof of Theorem \ref{thm:realequal}]
Let $(\varepsilon _n)_{n\geq 1}$ be a sequence over the set $\{ 0,1 \}$.
We define recursively the sequences $(a_n)_{n\geq 0}, (P_n)_{n\geq -1}$, and $(Q_n)_{n\geq -1}$ by 
\begin{equation*}
\begin{cases}
a_0=0,\ a_1=T+\varepsilon _1,\ a_n=T^{\lfloor (w-1)\deg Q_{n-1}\rfloor}+\varepsilon _n,\ n\geq 2,\\
P_{-1}=1,\ P_0=0,\ P_n=a_n P_{n-1}+P_{n-2},\ n\geq 1,\\
Q_{-1}=0,\ Q_0=1,\ Q_n=a_n Q_{n-1}+Q_{n-2},\ n\geq 1.
\end{cases}
\end{equation*}
Set $\xi _w:=[0,a_1,a_2,\ldots ]$.
Then $(P_n/Q_n)_{n\geq 0}$ is the convergent sequence of $\xi _w$.
It follows from Lemma \ref{lem:conti.lem} (\ref{enum:q}) that
\begin{equation*}
\lim _{n\rightarrow \infty } \frac{\deg Q_{n+1}}{\deg Q_n}=w.
\end{equation*}
Therefore, by Proposition \ref{prop:conti.w_d}, we have
\begin{equation*}
w_1(\xi _w)=w_1 ^{*}(\xi _w)=\ldots =w_d(\xi _w)=w_d ^{*}(\xi _w)=w.
\end{equation*}
\end{proof}

\begin{proof}[Proof of Theorem \ref{thm:mainconti1}]
For any $j\geq 1$, we put
\begin{equation*}
\xi _{w,j}:=[0,a_{1,w},\ldots ,a_{\lfloor w^j\rfloor ,w},\overline{a}].
\end{equation*}
Then $\xi _{w,j}$ is quadratic by Lemma \ref{lem:ult}.
It follows from the proof of Theorem 1.1 in \cite{Ooto17} that $(H(\xi _{w,j}))_{j\geq 1}$ is a divergent increasing sequence, and
\begin{gather*}
\lim _{j\rightarrow \infty }\frac{-\log |\xi _w -\xi _{w,j}|}{\log H(\xi _{w,j})}=w,\quad \lim _{j\rightarrow \infty }\frac{-\log |\xi _{w,j} -\xi _{w,j} '|}{\log H(\xi _{w,j})}=1,\\
\limsup _{j\rightarrow \infty }\frac{\log H(\xi _{w,j+1})}{\log H(\xi _{w,j})}\leq w.
\end{gather*}
By the definition of $w$, we have $2dw\leq (w-2)(w-d)$.
Applying Lemma \ref{lem:bestquad} with $\delta =\rho =w-d, \varepsilon =\chi =1$, and $\theta =w$, we obtain \eqref{eq:mainconti1} for all $2\leq n\leq d$.
\end{proof}

\begin{proof}[Proof of Theorem \ref{thm:mainconti2}]
For any $j\geq 1$, we put
\begin{equation*}
\xi _{w,\eta, j}:=[0,a_{1,w,\eta },\ldots ,a_{\lfloor w^j\rfloor ,w,\eta },\overline{a,\ldots ,a,c}],
\end{equation*}
where the length of the periodic part is $\lfloor \eta w^j\rfloor $.
Then $\xi _{w,\eta ,j}$ is quadratic by Lemma \ref{lem:ult}.
It follows from the proof of Theorem 1.2 in \cite{Ooto17} that $(H(\xi _{w,\eta ,j}))_{j\geq 1}$ is a divergent increasing sequence, and
\begin{gather*}
\lim _{j\rightarrow \infty }\frac{-\log |\xi _{w,\eta } -\xi _{w,\eta ,j}|}{\log H(\xi _{w,\eta ,j})}=\frac{2 w}{2+\eta },\quad \lim _{j\rightarrow \infty }\frac{-\log |\xi _{w,\eta ,j} -\xi _{w,\eta ,j} '|}{\log H(\xi _{w,\eta ,j})}=\frac{2}{2+\eta },\\
\limsup _{j\rightarrow \infty }\frac{\log H(\xi _{w,\eta ,j+1})}{\log H(\xi _{w,\eta ,j})}\leq w.
\end{gather*}
A direct computation shows that
\begin{equation*}
2 d w\leq \left(\frac{2 w}{2+\eta }-2 \right) \left( \frac{2 w}{2+\eta }-d \right) .
\end{equation*}
Applying Lemma \ref{lem:bestquad} with
\begin{equation*}
\delta =\rho =\frac{2 w}{2+\eta }-d,\quad \varepsilon =\chi =\frac{2}{2+\eta },\quad \theta =w,
\end{equation*}
we have \eqref{eq:mainconti2} for all $2\leq n\leq d$.
\end{proof}

\section{Further remarks}\label{sec:conrem}

In this section, we give some theorems associated to the main results.

\subsection{Relationship between automatic sequences and Diophantine exponents}

Let $k\geq 2$ be an integer.
We denote by $\Sigma _k$ the set $\{ 0,1,\ldots ,k-1 \}$.
A $k$-{\itshape automaton} is defined to be a sextuple
\begin{equation*}
A=(Q, \Sigma _k, \delta , q_0, \Delta ,\tau ),
\end{equation*} 
where $Q$ is a finite set of {\itshape states}, $\delta :Q\times \Sigma _k\rightarrow Q$ is a {\itshape transition function}, $q_0 \in Q$ is an {\itshape initial state}, a finite set $\Delta $ is an {\itshape output alphabet}, and $\tau :Q\rightarrow \Delta $ is an {\itshape output function}.
For $q\in Q$ and a finite word $W=w_0 w_1 \cdots w_n$ over $\Sigma _k$, we define $\delta (q,W)$ recursively by $\delta (q,W)=\delta (\delta (q,w_0 w_1\cdots w_{n-1}), w_n)$.
For an integer $n\geq 0$, we put $W_n:=w_r w_{r-1}\cdots w_0$, where $\sum _{i=0}^{r}w_i k^i$ is the $k$-ary expansion of $n$.
An infinite sequence $(a_n)_{n\geq 0}$ is said to be $k$-{\itshape automatic} if there exists a $k$-automaton $A=(Q, \Sigma _k, \delta , q_0, \Delta ,\tau )$ such that $a_n=\tau (\delta (q_0,W_n))$ for all $n\geq 0$.

Christol, Kamae, Mendes France, and Rauzy \cite{Christol80} characterized algebraic Laurent series by using finite automatons.
More precisely, they showed that for a sequence $(a_n)_{n\geq 0}$ over $\lF _q$, the Laurent series $\sum _{n=0}^{\infty }a_n T^{-n}$ is algebraic if and only if $(a_n)_{n\geq 0}$ is $p$-automatic.
It is known that for $m\geq 1$ and $k\geq 2$, a sequence $(a_n)_{n\geq 0}$ is $k$-automatic if and only if it is $k^m$-automatic (see Theorem 6.6.4 in \cite{Allouche03}).
Therefore, we obtain the following corollary of Theorem \ref{thm:mainalg}.

\begin{cor}\label{cor:pauto}
Let $d,m\geq 1$ be integers and $w>2d-1$ be a rational number.
Then there exists a $p^m$-automatic sequence $(a_n)_{n\geq 0}$ over $\lF _q$ such that
\begin{equation*}
w_1(\xi )=w_1 ^{*}(\xi )=\ldots =w_d(\xi )=w_d ^{*}(\xi )=w,
\end{equation*}
where $\xi =\sum _{n=0}^{\infty }a_n T^{-n}$.
\end{cor}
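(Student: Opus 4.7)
The plan is to combine Theorem~\ref{thm:mainalg} with the Christol--Kamae--Mendes France--Rauzy theorem, after a small normalization that puts the algebraic series produced into $\lF_q[[T^{-1}]]$. I would first apply Theorem~\ref{thm:mainalg} with the given $d$ and $w$ to obtain an algebraic Laurent series $\zeta\in\lF_q((T^{-1}))$ of Class IA satisfying
\begin{equation*}
w_1(\zeta)=w_1^*(\zeta)=\ldots =w_d(\zeta)=w_d^*(\zeta)=w.
\end{equation*}
Because the continued fraction of $\zeta$ begins with a partial quotient of positive degree, one has $|\zeta|\ge q>1$, so $\zeta$ is not of the shape $\sum_{n\ge 0}a_nT^{-n}$ required in the statement. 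I would therefore set $\xi:=1/\zeta$, which lies in $\lF_q[[T^{-1}]]$ and is algebraic over $\lF_q(T)$ of the same degree as $\zeta$; in particular $\xi=\sum_{n\ge 0}a_nT^{-n}$ for some $a_n\in\lF_q$.

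The main technical step is to verify that $\zeta\mapsto 1/\zeta$ preserves every $w_n$ and $w_n^*$. For a nonzero algebraic approximant $\alpha$ of degree $n$ with minimal polynomial $P(X)\in(\lF_q[T])[X]_{\min}$, the minimal polynomial of $1/\alpha$ is, after normalization into $(\lF_q[T])[X]_{\min}$, a unit multiple of the reciprocal polynomial $\tilde P(X)=X^nP(1/X)$, which has the same multiset of coefficients as $P$; hence $H(1/\alpha)=H(\alpha)$. The identity
\begin{equation*}
|\xi-1/\alpha|=\frac{|\zeta-\alpha|}{|\zeta\alpha|}\asymp|\zeta-\alpha|,
\end{equation*}
valid once $\alpha$ is close enough to $\zeta$ that $|\alpha|\asymp|\zeta|$ (a fixed constant), then yields $w_n^*(\xi)=w_n^*(\zeta)$. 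For $w_n$, the parallel computation $|\tilde Q(\xi)|=|\zeta|^{-n}|Q(\zeta)|\asymp|Q(\zeta)|$ together with $H(\tilde Q)=H(Q)$ for a polynomial $Q$ of degree $n$ yields $w_n(\xi)=w_n(\zeta)$. Consequently $\xi$ inherits the full chain $w_1(\xi)=w_1^*(\xi)=\ldots =w_d(\xi)=w_d^*(\xi)=w$.

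Finally, since $\xi\in\lF_q[[T^{-1}]]$ is algebraic over $\lF_q(T)$, the theorem of Christol, Kamae, Mendes France, and Rauzy \cite{Christol80} implies that $(a_n)_{n\ge 0}$ is $p$-automatic, and Theorem~6.6.4 of \cite{Allouche03} (already cited in the excerpt) upgrades $p$-automaticity to $p^m$-automaticity for every $m\ge 1$. The only step that takes real work is the invariance of the Diophantine exponents under $\zeta\mapsto 1/\zeta$; everything else is a direct appeal to results already in hand.
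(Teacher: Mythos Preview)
Your argument is correct and follows the same route as the paper: combine Theorem~\ref{thm:mainalg} with the Christol--Kamae--Mend\`es France--Rauzy theorem and the equivalence of $p$-automaticity with $p^m$-automaticity. The only difference is that you make explicit the normalization into $\lF_q[[T^{-1}]]$ via $\zeta\mapsto 1/\zeta$ and correctly verify that $w_n$ and $w_n^{*}$ are invariant under this map, a point the paper leaves tacit.
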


In this subsection, we consider the problem of determining whether or not we can extend Corollary \ref{cor:pauto} to $k$-automatic sequence for any integer $k\geq 2$.
It is the natural problem in view of Corollary \ref{cor:pauto}.

Let $k\geq 2$ be an integer.
We define a set $S_k$ of rational numbers as follows:
\begin{equation*}
S_k=\left\{ \frac{k^a}{\ell }\setmid a, \ell \in \lZ _{\geq 1}\right\} .
\end{equation*}

Bugeaud \cite{Bugeaud08} proved that for an integer $k\geq 2$ and $w\in S_k$ with $w>2$, there exists a $k$-automatic sequence $(a_n)_{n\geq 0}$ over $\{ 0,2\}$ such that $w_1(\xi )=w-1$, where $\xi =\sum _{n=0}^{\infty }a_n/3^n$.
The proof of this result essentially depends on the Folding Lemma and an analogue of Lemma \ref{lem:conti.w_1}.
It is known that the Folding Lemma holds for Laurent series over a finite field. 
For the statement and proof of the Folding Lemma, we refer the readers to \cite[Proposition 2]{Poorten92} and \cite[the proof of Theorem 1]{Shallit79}.
We have the following theorem which is similar to Bugeaud's result.

\begin{thm}
Let $k\geq 2$ be an integer and $w>2$ be in $S_k$.
Then there exists a $k$-automatic sequence $(a_n)_{n\geq 0}$ over $\lF _q$ such that $w_1(\xi )=w-1$, where $\xi =\sum _{n=0}^{\infty }a_n T^{-n}$.
\end{thm}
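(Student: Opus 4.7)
The proof adapts Bugeaud's argument from \cite{Bugeaud08} to the Laurent series setting, the two essential ingredients being the Folding Lemma (valid in $\lF_q((T^{-1}))$ by \cite[Proposition 2]{Poorten92} and \cite[proof of Theorem 1]{Shallit79}) and Lemma \ref{lem:conti.w_1}. Recall that the Folding Lemma here reads: if $p/q = [a_0; a_1, \ldots, a_n]$ with $p, q \in \lF_q[T]$ and $x \in \lF_q[T] \setminus \{0\}$, then
\begin{equation*}
\frac{p}{q} + \frac{(-1)^n}{x q^2} = [a_0; a_1, \ldots, a_n, x, -a_n, -a_{n-1}, \ldots, -a_1].
\end{equation*}

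Write $w = k^a/\ell$ with $a, \ell \in \lZ_{\geq 1}$ and $w > 2$. I would construct $\xi$ as the limit in $\lF_q((T^{-1}))$ of a sequence of rationals $\xi_j = P_j/Q_j$ defined by iterated folding, starting from $\xi_0 = 1/T$. At stage $j$, the Folding Lemma is applied with a folding polynomial $T^{f_j}$ whose degree $f_j$ is chosen so that the new denominator degree $\deg Q_{j+1} = 2\deg Q_j + f_j$ is (approximately) $w\cdot\deg Q_j$, i.e.\ $f_j \approx (w-2)\deg Q_j$; the condition $w > 2$ guarantees such a choice. To secure $k$-automaticity later, I would organize the foldings in blocks of $\ell$ stages and arrange the $f_j$ within each block so that a complete block multiplies the denominator degree by the integer $k^a$. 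The resulting $\xi$ has an explicit continued fraction expansion whose partial quotients fall into two types: the inserted folding polynomials $\pm T^{f_j}$ of large degree, and the intermediate partial quotients of degree $1$ coming from the reflected tail $-a_n, \ldots, -a_1$.

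A direct degree count using the Folding Lemma shows that $\deg q_{n+1}/\deg q_n$ achieves the value $w-1$ along the subsequence of indices immediately following a folding, while stays below this value elsewhere, so $\limsup_{n\to\infty} \deg q_{n+1}/\deg q_n = w-1$. Lemma \ref{lem:conti.w_1} then gives $w_1(\xi) = w - 1$ as required.

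The main obstacle, as in the real case of \cite{Bugeaud08}, is to promote the evident self-similar (substitution) structure of the partial quotient sequence of $\xi$ to $k$-automaticity of the Taylor coefficient sequence $(a_n)$ of $\xi = \sum a_n T^{-n}$. By construction the partial quotient sequence is generated by a uniform morphism of length $k^a$ on a finite alphabet of polynomials, hence is $k^a$-automatic. The translation from automatic partial quotients to automatic Laurent coefficients is the delicate step; it is carried out exactly as in \cite{Poorten92, Shallit79}, exploiting the fact that each finite truncation $\xi_j$ is a rational function whose expansion is determined on a window of length comparable to $\deg Q_j$, so the coefficients $(a_n)$ can be computed by an automaton reading the base-$k$ digits of $n$. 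Finally, Theorem 6.6.4 of \cite{Allouche03} upgrades $k^a$-automaticity to $k$-automaticity, completing the proof.
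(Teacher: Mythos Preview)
Your approach is the one the paper indicates: adapt Bugeaud's argument \cite{Bugeaud08} using the Folding Lemma together with Lemma~\ref{lem:conti.w_1}. The construction via iterated folding with monomials $T^{f_j}$ and the computation of $w_1(\xi)=w-1$ from the ratio $\deg q_{n+1}/\deg q_n$ are correct.

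The confused part is your treatment of $k$-automaticity. You frame it as a ``translation from automatic partial quotients to automatic Laurent coefficients'' and say this is ``carried out exactly as in \cite{Poorten92, Shallit79}''. Those references do not provide any such general translation, and none is needed here. The point is that iterated folding of $1/T$ with monomials $T^{f_j}$ yields, after each complete fold, a convergent with denominator $\pm T^{n_j}$ (since the folded denominator is $xq^2$); consequently
\[
\xi_{j+1}-\xi_j=\pm T^{-n_{j+1}},\qquad \xi=\sum_{j\geq 0}\pm T^{-n_j},
\]
so the coefficient sequence $(a_n)$ is simply the signed indicator of the set $\{n_j\}$. In Bugeaud's argument (and in the adaptation the paper has in mind) one \emph{starts} from this lacunary description: the set $\{n_j\}$ is chosen so that its indicator is $k$-automatic directly---your block arrangement making each block multiply $n_j$ by $k^a$ does exactly this, since then $\{n_j\}$ is defined by a condition on the base-$k$ digits of $n$---and the Folding Lemma is invoked only to read off the continued fraction and hence $w_1$. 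So the logic runs coefficients $\to$ continued fraction, not the other way round; your detour through ``automatic partial quotients'' is both unnecessary and unsupported by the cited sources. Once you reorganise the argument in this direction, the proof is complete and agrees with the paper's intended one.
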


Using Lemma \ref{lem:bestrational}, we prove the following theorem.

\begin{thm}
Let $d\geq 1$ be an integer, $k\geq 2$ be an integer, and $w>(2d+1+\sqrt{4d^2+1})/2$ be in $S_k$.
Then there exists a $k$-automatic sequence $(a_n)_{n\geq 0}$ over $\lF _q$ such that 
\begin{equation*}
w_1(\xi )=w_1 ^{*}(\xi )=\ldots =w_d(\xi )=w_d^{*}(\xi )=w-1,
\end{equation*}
where $\xi =\sum _{n=0}^{\infty }a_n T^{-n}$.
\end{thm}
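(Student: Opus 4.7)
The plan is to construct $\xi $ explicitly by iterated application of the Folding Lemma for Laurent series (see \cite{Poorten92,Shallit79}), adapting the construction of Bugeaud \cite{Bugeaud08} for real numbers, and then to apply Lemma \ref{lem:bestrational} to a carefully chosen subsequence of convergents. Write $w=k^a/\ell $ with $a,\ell \geq 1$. Starting from an initial short continued fraction $[a_0,a_1]$ with $a_1\in \lF _q[T]$ of small degree, I would inductively apply the Folding Lemma: at stage $j$, given the truncation $[a_0,a_1,\ldots ,a_{n_j}]$ whose convergent $P_{n_j}/Q_{n_j}$ has $\deg Q_{n_j}=D_j$, extend it to
\begin{equation*}
[a_0,a_1,\ldots ,a_{n_j},T^{y_j},-a_{n_j},\ldots ,-a_1],
\end{equation*}
where $y_j$ is chosen so that $D_j+y_j=\lfloor (w-1)D_j\rfloor $, giving in particular $n_{j+1}=2n_j+1$. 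Passing to the limit yields $\xi \in \lF _q((T^{-1}))$ with explicit continued fraction expansion and a subsequence of convergents $(P_{n_j}/Q_{n_j})_{j\geq 1}$ satisfying
\begin{equation*}
\lim_{j\rightarrow \infty }\frac{\deg Q_{n_j+1}}{\deg Q_{n_j}}=w-1,\quad \lim_{j\rightarrow \infty }\frac{\deg Q_{n_{j+1}}}{\deg Q_{n_j}}=w.
\end{equation*}
Because $w=k^a/\ell \in S_k$, the positions and exponents of the inserted partial quotients $T^{y_j}$ can be arranged to follow a pattern generated by a $k$-automaton, and an argument along the lines of Bugeaud's \cite{Bugeaud08} shows that the coefficient sequence $(a_n)_{n\geq 0}$ of $\xi =\sum_{n=0}^{\infty }a_nT^{-n}$ is then $k$-automatic.

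By Lemma \ref{lem:conti.lem}(iv), the first of the above limits gives $|\xi -P_{n_j}/Q_{n_j}|\asymp |Q_{n_j}|^{-w}$. I would then apply Lemma \ref{lem:bestrational} to the sequence $(p_j/q_j):=(P_{n_j}/Q_{n_j})_{j\geq 1}$ with the parameters $\theta =w$ and $\delta =\rho =w-d$. The hypotheses hold since $\limsup _{j\to \infty }\log |q_{j+1}|/\log |q_j|=w=\theta $, while both the $\liminf $ and the $\limsup $ of $-\log |\xi -p_j/q_j|/\log |q_j|$ equal $w=d+\delta =d+\rho $. The conclusion of the lemma then reads
\begin{equation*}
w-1=d-1+\delta \leq w_n^{*}(\xi )\leq w_n(\xi )\leq \max \left( d-1+\rho ,\frac{d\theta }{\delta }\right) =\max \left( w-1,\frac{dw}{w-d}\right)
\end{equation*}
for all $1\leq n\leq d$. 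A direct computation shows that the hypothesis $w>(2d+1+\sqrt{4d^2+1})/2$ is equivalent to $w^2-(2d+1)w+d>0$, which in turn is equivalent to $dw/(w-d)<w-1$. Hence the maximum above equals $w-1$, and we obtain $w_n(\xi )=w_n^{*}(\xi )=w-1$ for all $1\leq n\leq d$.

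The principal difficulty lies in Step~1: verifying that the Folding Lemma construction can be arranged so that the coefficient sequence $(a_n)_{n\geq 0}$ is genuinely $k$-automatic. This requires that the positions of the large partial quotients $T^{y_j}$ (growing geometrically with ratio $w$) and their exponents follow a pattern encodable by a $k$-automaton, and the fact that $w\in S_k$ is precisely what makes this possible. By contrast, the quantitative part of Step~2 is essentially a two-line check: once the approximating subsequence is in hand, Lemma \ref{lem:bestrational} combined with the algebraic rearrangement of the hypothesis on $w$ yields the conclusion immediately.
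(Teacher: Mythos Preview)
Your quantitative Step~2 is correct: with the subsequence $(P_{n_j}/Q_{n_j})$ satisfying $\deg Q_{n_{j+1}}/\deg Q_{n_j}\to w$ and $\deg Q_{n_j+1}/\deg Q_{n_j}\to w-1$, Lemma~\ref{lem:bestrational} applies with $\theta=w$, $\delta=\rho=w-d$, and your rearrangement $w>(2d+1+\sqrt{4d^2+1})/2\Leftrightarrow w^2-(2d+1)w+d>0\Leftrightarrow dw/(w-d)<w-1$ is exactly what forces the maximum to equal $w-1$.

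The paper, however, takes a shorter and somewhat different route in Step~1. Rather than building the continued fraction via the Folding Lemma and then reading off the Laurent coefficients, it works directly with a lacunary series $\xi=\sum_{j}T^{-n_j}$ whose support set $\{n_j\}$ is $k$-automatic; the relevant construction is quoted from \cite{Bugeaud152} (not \cite{Bugeaud08}), and gives a $\{0,1\}$-valued sequence with gap ratios satisfying $(2d+1+\sqrt{4d^2+1})/2<n_{j+1}/n_j\le w$ and $n_{j+1}/n_j=w$ infinitely often. The approximants are then simply the truncations $p_j/q_j=\sum_{i\le j}T^{-n_i}$ with $q_j=T^{n_j}$, bypassing continued fractions entirely. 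Correspondingly, the paper applies Lemma~\ref{lem:bestrational} with $\theta=w$, $\rho=w-d$, but $\delta=(1+\sqrt{4d^2+1})/2$ rather than your $\delta=w-d$; this weaker $\delta$ matches the weaker control on the gap ratios (only a lower bound, not an exact limit), and the lower bound $w_n^{*}(\xi)\ge w-1$ is then obtained separately from $w_1(\xi)\ge w-1$. Your choice of parameters is cleaner but demands that $D_{j+1}/D_j\to w$ exactly, which places a stronger constraint on the $k$-automatic construction you would need to carry out. In short: your Folding Lemma route is plausible but, as you yourself flag, the $k$-automaticity of the resulting coefficient sequence is the real work and is only sketched; the paper sidesteps this by invoking a ready-made $k$-automatic lacunary set from the literature.
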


Note that $(2d-1+\sqrt{4d^2+1})/2$ is greater than $2d-1$ for any $d\geq 1$.

\begin{proof}
Slightly modifying the proof of Theorem 1.2 in \cite{Bugeaud152}, we deduce that there exists a $k$-automatic sequence $(a_n)_{n\geq 0}$ with $a_n \in \{ 0,1\}$ for all $n\geq 1$ which satisfies the following properties:
\begin{itemize}
	\item $\displaystyle \frac{2d+1+\sqrt{4d^2+1}}{2}<\frac{n_{j+1}}{n_j}\leq w$ holds for all $j\geq 0$,
	\item there exist infinitely many $j\geq 0$ such that $\displaystyle \frac{n_{j+1}}{n_j}=w$,
\end{itemize}
where 
\begin{equation*}
\{ n\in \lZ _{\geq 0}\mid a_n=1\} =:\{ n_0<n_1<n_2<\ldots \} .
\end{equation*}

We put $q_j:=T^{n_j}$ and $p_j:=1+T^{n_j-n_{j-1}}+\cdots +T^{n_j-n_0}$ for any $j\geq 0$.
Then we have for any $j\geq 0$, $\gcd(p_j, q_j)=1$ and $p_j/q_j=\sum _{n=0}^{n_j}a_n T^{-n}.$
We put $\xi :=\sum _{n=0}^{\infty }a_n T^{-n}.$
A direct computation shows that
\begin{equation*}
\frac{-\log |\xi -p_{n_j}/q_{n_j}|}{\log |q_{n_j}|}=\frac{\log |q_{n_{j+1}}|}{\log |q_{n_j}|}=\frac{n_{j+1}}{n_j}
\end{equation*}
for any $j\geq 0$.
By the definition of $w_1$, we obtain $w_1(\xi )\geq w-1$.
Applying Lemma \ref{lem:bestrational} with $\theta =w, \rho =w-d$, and $\delta =(1+\sqrt{4d^2+1})/2$, we deduce that
\begin{equation*}
w_1(\xi )=w_1 ^{*}(\xi )=\ldots =w_d(\xi )=w_d ^{*}(\xi )=w-1.
\end{equation*}
\end{proof}

\subsection{Analogues of Theorems \ref{thm:mainconti1} and \ref{thm:mainconti2} for real and $p$-adic numbers}

In this subsection, we give analogues of Theorems \ref{thm:mainconti1} and \ref{thm:mainconti2}, which are generalizations of Theorems 4.1 and 4.2 in \cite{Bugeaud12}, and Theorems 1 and 2 in \cite{Bugeaud15}.

\begin{thm}\label{thm:rmainconti1}
Let $d\geq 2$ be an integer and $w\geq (3d+2+\sqrt{9d^2+4d+4})/2$ be a real number.
Let $a,b$ be distinct positive integers.
Let $(a_{n,w})_{n\geq 1}$ be a sequence given by
\begin{equation*}
a_{n,w}=\begin{cases}
b & \text{if } n=\lfloor w^i \rfloor \text{ for some integer } i\geq 0,\\
a & \text{otherwise}.
\end{cases}
\end{equation*}
Set the continued fraction $\xi _w:=[0,a_{1,w},a_{2,w},\ldots ]\in \lR $.
Then we have 
\begin{equation}\label{eq:rmainconti1}
w_n ^{*}(\xi _w)=w-1,\quad w_n(\xi _w)=w 
\end{equation}
for all $2\leq n\leq d$.
\end{thm}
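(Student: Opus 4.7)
The plan is to mirror the positive characteristic proof of Theorem \ref{thm:mainconti1} in the real setting, since the continued fraction machinery behind Lemma \ref{lem:bestquad} is well known to have a classical analogue over $\mathbb{R}$. Concretely, for each $j \geq 1$ I would introduce the real quadratic truncation
\begin{equation*}
\xi_{w,j} := [0, a_{1,w}, a_{2,w}, \ldots, a_{\lfloor w^j \rfloor, w}, \overline{a}],
\end{equation*}
and use the estimates already established in the proof of Theorem 4.1 of \cite{Bugeaud12} (which is precisely the $d=2$ case of the statement here) to obtain that $(H(\xi_{w,j}))_{j \geq 1}$ is a divergent increasing sequence and
\begin{gather*}
\lim_{j\to\infty}\frac{-\log|\xi_w-\xi_{w,j}|}{\log H(\xi_{w,j})} = w,\quad
\lim_{j\to\infty}\frac{-\log|\xi_{w,j}-\xi_{w,j}'|}{\log H(\xi_{w,j})} = 1,\\
\limsup_{j\to\infty}\frac{\log H(\xi_{w,j+1})}{\log H(\xi_{w,j})} \leq w.
\end{gather*}
These estimates come out of the standard block-analysis of continued fractions based on Lemma \ref{lem:conti.lem}: concatenating a periodic tail $\overline{a}$ produces a quadratic whose height is comparable to $|q_{\lfloor w^j\rfloor}|^2$, and the next occurrence of $b$ in the sequence $(a_{n,w})$ yields the very good quadratic approximation of order $w$, while the Galois conjugate sits at distance $\asymp H(\xi_{w,j})^{-1}$ because the period $\overline{a}$ is bounded.

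Next, I would invoke the real analogue of Lemma \ref{lem:bestquad}. Its formulation and proof are entirely parallel to the positive characteristic version: the Liouville inequalities (Lemmas \ref{lem:Lio.inequ1}, \ref{lem:Lio.inequ2}) are replaced by their real counterparts, and Lemma \ref{lem:poly} by the classical Lipschitz estimate $|P(\alpha)-P(\beta)| \ll H(P)|\alpha-\beta|$ for $\alpha,\beta$ in a bounded set. With $(\xi_{w,j})_{j\geq 1}$ as above and the parameters
\begin{equation*}
\delta = \rho = w - d,\quad \varepsilon = \chi = 1,\quad \theta = w,
\end{equation*}
the hypothesis $2d\theta \leq (d-2+\delta)\delta$ translates into $w^2 - (3d+2)w + 2d \geq 0$, whose larger root is exactly $(3d+2+\sqrt{9d^2+4d+4})/2$, matching the assumption on $w$. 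Applying the real version of Lemma \ref{lem:bestquad} then yields $w_n^{*}(\xi_w) = w-1$ and $w_n(\xi_w) - w_n^{*}(\xi_w) = 1$ for all $2 \leq n \leq d$, which is \eqref{eq:rmainconti1}.

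The main obstacle I foresee is giving a clean citation or short derivation for the real analogue of Lemma \ref{lem:bestquad}. The $n = 2$ case is contained in \cite{Bugeaud12}, but the extension to arbitrary $n \leq d$ requires verifying that the sequence $(\xi_{w,j})$ really is a best approximation sequence among algebraic numbers of degree at most $d$, not just at most $2$; this is what forces the stronger lower bound on $w$ via the inequality $2d\theta \leq (d-2+\delta)\delta$. Once this is in hand, the upper bound on $w_n(\xi_w)$ follows from the Lipschitz estimate applied at the unique close quadratic approximation, together with Liouville's inequality, and matches the lower bound $w$ coming from $w_1(\xi_w)=w$ through the integer polynomial $P_j(X) = A_j(X-\xi_{w,j})(X-\xi_{w,j}')$, whose value at $\xi_w$ is $\asymp H(P_j)\,|\xi_w - \xi_{w,j}|\,|\xi_{w,j}-\xi_{w,j}'|$, yielding the gap of exactly $1$ between $w_n(\xi_w)$ and $w_n^{*}(\xi_w)$.
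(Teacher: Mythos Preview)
Your proposal is correct and follows essentially the same route as the paper: define the quadratic truncations $\xi_{w,j}=[0,a_{1,w},\ldots,a_{\lfloor w^j\rfloor,w},\overline{a}]$, import the three asymptotic estimates from the proof of Theorem~4.1 in \cite{Bugeaud12}, and then apply the real analogue of Lemma~\ref{lem:bestquad} (stated in the paper as Lemma~\ref{lem:bestquadr}) with $\delta=\rho=w-d$, $\varepsilon=\chi=1$, $\theta=w$. The ``obstacle'' you flag about the real analogue of Lemma~\ref{lem:bestquad} is exactly what the paper handles by formulating Lemma~\ref{lem:bestquadr}, so there is no missing ingredient.
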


\begin{thm}\label{thm:rmainconti2}
Let $d\geq 2$ be an integer, $w\geq 121d^2$ be a real number, and $a,b,c$ be distinct positive integers.
Let $0<\eta <\sqrt{w}/d$ be a positive number and put $m_i:=\lfloor (\lfloor w^{i+1}\rfloor -\lfloor w^i-1\rfloor )/\lfloor \eta w^i\rfloor \rfloor $ for all $i\geq 1$.
Let $(a_{n,w,\eta })_{n\geq 1}$ be the sequence given by
\begin{equation*}
a_{n,w,\eta }=\begin{cases}
b & \text{if } n=\lfloor w^i \rfloor \text{ for some integer } i\geq 0,\\
c & \parbox{250pt}{$\text{if } n \neq \lfloor w^i\rfloor \text{ for all integers }i\geq 0, \text{and } n=\lfloor w^j\rfloor +m \lfloor \eta w^j\rfloor \text{ for some integers }1\leq m\leq m_j, j\geq 1,$}\\
a & \text{otherwise}.
\end{cases}
\end{equation*}
Set the continued fraction $\xi _{w,\eta }:=[0,a_{1,w,\eta },a_{2,w,\eta },\ldots ]\in \lR $.
Then we have
\begin{equation}\label{eq:rmainconti2}
w_n ^{*}(\xi _{w,\eta })=\frac{2 w-2-\eta }{2+\eta },\quad w_n(\xi _{w,\eta })=\frac{2 w-\eta }{2+\eta }
\end{equation}
for all $2\leq n\leq d$.
Hence, we have
\begin{equation*}
w_n(\xi _{w,\eta })-w_n ^{*}(\xi _{w,\eta })=\frac{2}{2+\eta } 
\end{equation*}
for all $2\leq n\leq d$.
\end{thm}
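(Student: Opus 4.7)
The plan is to transplant the proof of Theorem \ref{thm:mainconti2} to the archimedean setting. Given the continued fraction $\xi_{w,\eta}\in\mathbb{R}$, I would introduce the quadratic approximants
\[
\xi_{w,\eta,j} := [0, a_{1,w,\eta}, \ldots, a_{\lfloor w^j\rfloor, w,\eta}, \overline{a,\ldots,a,c}],
\]
where the periodic block has length $\lfloor \eta w^j\rfloor$. By Lagrange's theorem (the real analogue of Lemma \ref{lem:ult}), each $\xi_{w,\eta,j}$ is a real quadratic irrational, and they play the role that the Class IA truncations played in the positive characteristic case.

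The classical formulas for real continued fractions mirror those of Lemma \ref{lem:conti.lem} up to multiplicative constants. Consequently, the computations of heights, approximation distances and conjugate separations carried out in the proof of Theorem 1.2 of \cite{Ooto17} (and already recorded for the real setting in the proofs of Theorems 1 and 2 of \cite{Bugeaud15}) transfer verbatim to yield
\begin{gather*}
\lim_{j\to\infty}\frac{-\log|\xi_{w,\eta} - \xi_{w,\eta,j}|}{\log H(\xi_{w,\eta,j})} = \frac{2w}{2+\eta},\quad \lim_{j\to\infty}\frac{-\log|\xi_{w,\eta,j} - \xi_{w,\eta,j}'|}{\log H(\xi_{w,\eta,j})} = \frac{2}{2+\eta},\\
\limsup_{j\to\infty}\frac{\log H(\xi_{w,\eta,j+1})}{\log H(\xi_{w,\eta,j})} \leq w,
\end{gather*}
together with the fact that $(H(\xi_{w,\eta,j}))_{j\geq 1}$ is strictly increasing and unbounded.

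The final step is to feed these estimates into a real-number analogue of Lemma \ref{lem:bestquad}. The archimedean version, built from Liouville's inequality, a best-approximation argument, and the discriminant lower bound $|\alpha-\alpha'|\gg H(\alpha)^{-1}$ for real quadratics, is available as Lemma 2 of \cite{Bugeaud15} (and implicit in Lemma 4.1 of \cite{Bugeaud12}). Applied with
\[
\delta = \rho = \frac{2w}{2+\eta} - d, \quad \varepsilon = \chi = \frac{2}{2+\eta}, \quad \theta = w,
\]
the hypothesis $w \geq 121 d^2$ yields the numerical inequality $2d\theta \leq (d-2+\delta)\delta$ already verified in the proof of Theorem \ref{thm:mainconti2}, and the conclusion \eqref{eq:rmainconti2} follows.

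The principal obstacle is that the non-archimedean proof of Lemma \ref{lem:bestquad} exploited the ultrametric equality $|\xi - \alpha_j'| = |\alpha_j - \alpha_j'|$ to identify $|P_j(\xi)|$ with $H(P_j)\,|\xi - \alpha_j|\,|\alpha_j - \alpha_j'|$ exactly. Over $\mathbb{R}$ this must be replaced by a two-sided asymptotic bound, which still holds because $|\xi-\alpha_j|$ tends to zero far faster than $|\alpha_j-\alpha_j'|$; the constants that appear are absorbed in the logarithmic limits, so the real version of the lemma gives precisely the same numerical conclusion.
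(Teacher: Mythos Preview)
Your approach is essentially identical to the paper's: you introduce the same periodic-tail quadratic approximants $\xi_{w,\eta,j}$, record the same three asymptotic ratios, and feed them into the real analogue of Lemma~\ref{lem:bestquad} (stated in the paper as Lemma~\ref{lem:bestquadr}) with exactly the parameters $\delta=\rho=\tfrac{2w}{2+\eta}-d$, $\varepsilon=\chi=\tfrac{2}{2+\eta}$, $\theta=w$. The only slip is bibliographic: the real-number height and distance estimates you invoke are established in \cite{Bugeaud12}, not in the $p$-adic paper \cite{Bugeaud15}.
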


It seems that for each $d\geq 3$, the real numbers $\xi $ defined by Theorems \ref{thm:rmainconti1} and \ref{thm:rmainconti2} are the first explicit continued fraction examples for which $w_d(\xi )$ and $w_d ^{*}(\xi )$ are different.

We denote by $\abs{\cdot }_p$ the absolute value of $\lQ_p$ normalized to satisfy $\abs{p}_p=p^{-1}$.
We recall the definitions of Diophantine exponents $w_n$ and $w_n^{*}$ in $\lQ _p$.
For $\xi \in \lQ_p$ and an integer $n\geq 1$, we denote by $w_n(\xi )$ (resp.\ $w_n^{*}(\xi )$) the supremum of the real numbers $w$ (resp.\ $w^{*}$) which satisfy 
\begin{equation*}
0<|P(\xi )|_p\leq H(P)^{-w-1}\quad (\text{resp.\ } 0<|\xi -\alpha |_p\leq H(\alpha )^{-w^{*}-1})
\end{equation*}
for infinitely many $P(X)\in \lZ[X]$ of degree at most $n$ (resp.\ algebraic numbers $\alpha \in \lQ_p$ of degree at most $n$).

\begin{thm}\label{thm:pmainconti1}
Let $d\geq 2$ be an integer and $w\geq (3d+2+\sqrt{9d^2+4d+4})/2$ be a real number.
Let $b$ be a positive integer and $(\varepsilon _j)_{j\geq 0}$ be a sequence in $\{ 0,1\}$.
Let $(a_{n,w})_{n\geq 1}$ be a sequence given by
\begin{equation*}
a_{n,w}=\begin{cases}
b+3 i+2 & \text{if } n=\lfloor w^i \rfloor \text{ for some integer } i\geq 0,\\
b+3 i+\varepsilon _j & \text{if } \lfloor w^i \rfloor <n<\lfloor w^{i+1} \rfloor \text{ for some integer } i\geq 0.
\end{cases}
\end{equation*}
Set the Schneider's $p$-adic continued fraction $\xi _w:=[a_{1,w},a_{2,w},\ldots ]\in \lQ _p$.
Then we have 
\begin{equation}\label{eq:pmainconti1}
w_n ^{*}(\xi _w)=w-1,\quad w_n(\xi _w)=w 
\end{equation}
for all $2\leq n\leq d$.
\end{thm}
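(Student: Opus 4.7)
The plan is to parallel the proof of Theorem \ref{thm:mainconti1} as closely as possible, substituting Schneider's $p$-adic continued fraction algorithm for the continued fraction algorithm in $\lF_q((T^{-1}))$. The first step is to associate to $\xi_w$ a sequence $(\xi_{w,j})_{j\geq 1}$ of quadratic $p$-adic approximations, obtained by truncating the Schneider expansion at index $\lfloor w^j\rfloor$ and extending it by a carefully chosen tail that forces $\xi_{w,j}$ to be algebraic of degree $2$ over $\lQ$ while still lying in $\lQ_p$. The binary sequence $(\varepsilon_j)_{j\geq 0}$ appearing in the statement is present precisely to supply the combinatorial flexibility needed to arrange such a tail: since Schneider's continued fractions do not enjoy a clean ``ultimately periodic iff quadratic'' correspondence like the one provided by Lemma \ref{lem:ult}, the construction must follow the model used by Bugeaud in \cite{Bugeaud15}.

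Once the $\xi_{w,j}$ are in place, I would establish the following four estimates, which are the $p$-adic analogues of those obtained in the proof of Theorem \ref{thm:mainconti1}: $(H(\xi_{w,j}))_{j\geq 1}$ is a divergent increasing sequence, and
\[
\lim_{j\to\infty}\frac{-\log|\xi_w-\xi_{w,j}|_p}{\log H(\xi_{w,j})}=w,\quad \lim_{j\to\infty}\frac{-\log|\xi_{w,j}-\xi_{w,j}'|_p}{\log H(\xi_{w,j})}=1,\quad \limsup_{j\to\infty}\frac{\log H(\xi_{w,j+1})}{\log H(\xi_{w,j})}\leq w.
\]
These are direct computations from the Schneider recursions together with the formula expressing $H(\xi_{w,j})$ in terms of the leading coefficient of its minimal polynomial, exactly paralleling what is done in \cite{Ooto17} in the Laurent series case.

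The third step is to invoke a $p$-adic analogue of Lemma \ref{lem:bestquad}; the required version is essentially present in \cite{Bugeaud15}. I would apply it with $\delta=\rho=w-d$, $\varepsilon=\chi=1$, and $\theta=w$. The arithmetic condition on $w$ in the statement, namely $w\geq(3d+2+\sqrt{9d^2+4d+4})/2$, is exactly equivalent to $2dw\leq(w-2)(w-d)$, which is in turn the hypothesis $2d\theta\leq(d-2+\delta)\delta$ needed to apply the lemma. The lemma then immediately gives $w_n^*(\xi_w)=w-1$ and $w_n(\xi_w)-w_n^*(\xi_w)=1$ for every $2\leq n\leq d$, which is \eqref{eq:pmainconti1}.

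The main obstacle is the construction of the quadratic approximations $\xi_{w,j}$. In the Laurent series setting Lemma \ref{lem:ult} allows one simply to periodize the tail and automatically obtain a quadratic element, whereas Schneider's $p$-adic continued fraction offers no such shortcut; one must exhibit a tail by hand and then verify quadraticity via an explicit minimal polynomial, which is the reason the statement involves the auxiliary sequence $(\varepsilon_j)$. Once that construction and the associated height and distance estimates are in place, the remainder of the argument is routine.
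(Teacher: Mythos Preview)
Your proposal follows essentially the same strategy as the paper: define quadratic approximations $\xi_{w,j}$ by truncating and periodizing, import the four asymptotic estimates from \cite{Bugeaud15}, and apply the $p$-adic analogue of Lemma \ref{lem:bestquad} (which the paper states as Lemma \ref{lem:bestquadp}) with exactly the parameters $\delta=\rho=w-d$, $\varepsilon=\chi=1$, $\theta=w$ that you name.

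One misconception worth correcting: you overstate the obstacle in constructing the $\xi_{w,j}$. While it is true that Schneider's algorithm lacks the full ``ultimately periodic $\Leftrightarrow$ quadratic'' equivalence, the forward implication (ultimately periodic $\Rightarrow$ quadratic) does hold, and that is all that is needed. The paper simply sets $\xi_{w,j}:=[a_{1,w},\ldots,a_{\lfloor w^j\rfloor,w},\overline{a_{\lfloor w^j\rfloor+1,w}}]$ with a period-one tail and cites \cite{Bugeaud15} for quadraticity and for the estimates. The sequence $(\varepsilon_j)$ is not there to supply combinatorial flexibility for arranging a quadratic tail; its role (following \cite{Bugeaud15}) is rather to parametrize an uncountable family of admissible $\xi_w$ and to keep the partial quotients within each block constant so that the height and distance computations go through cleanly. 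Once you drop that misdiagnosis, your outline coincides with the paper's proof.
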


\begin{thm}\label{thm:pmainconti2}
Let $d\geq 2$ be an integer and $w\geq 121d^2$ be a real number.
Let $b$ be positive integer, $(\varepsilon _j)_{j\geq 0}$ be a sequence in $\{ 0,1\}$, and $0<\eta <\sqrt{w}/d$ be a positive number.
Let $(a_{n,w,\eta })_{n\geq 1}$ be the sequence given by
\begin{equation*}
a_{n,w,\eta }=\begin{cases}
b+4 i+3 & \text{if } n=\lfloor w^i \rfloor \text{ for some integer } i\geq 0,\\
b+4 i+2 & \parbox{210pt}{$\text{if } \lfloor w^i \rfloor <n<\lfloor w^{i+1} \rfloor \text{ for some integer } i\geq 0 \text{ and } (n-\lfloor w^i \rfloor )/\lfloor \eta w^i \rfloor \in \lZ ,$}\\
b+4 i+\varepsilon _i & \parbox{210pt}{$\text{if } \lfloor w^i \rfloor <n<\lfloor w^{i+1} \rfloor \text{ for some integer } i\geq 0 \text{ and } (n-\lfloor w^i \rfloor )/\lfloor \eta w^i \rfloor \not\in \lZ .$}
\end{cases}
\end{equation*}
Set the Schneider's $p$-adic continued fraction $\xi _{w,\eta }:=[a_{1,w,\eta },a_{2,w,\eta },\ldots ]\in \lQ _p$.
Then we have
\begin{equation}\label{eq:pmainconti2}
w_n ^{*}(\xi _{w,\eta })=\frac{2 w-2-\eta }{2+\eta },\quad w_n(\xi _{w,\eta })=\frac{2 w-\eta }{2+\eta }
\end{equation}
for all $2\leq n\leq d$.
Hence, we have
\begin{equation*}
w_n(\xi _{w,\eta })-w_n ^{*}(\xi _{w,\eta })=\frac{2}{2+\eta } 
\end{equation*}
for all $2\leq n\leq d$.
\end{thm}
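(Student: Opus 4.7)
The plan is to transplant the argument used for Theorem \ref{thm:mainconti2} into the $p$-adic setting, the principal changes being that the field of Laurent series is replaced by $\mathbb{Q}_p$, the continued fraction expansion by Schneider's $p$-adic continued fraction, and Lemma \ref{lem:bestquad} by its $p$-adic counterpart, which can be read off from the techniques of \cite{Bugeaud15}. For each $j\geq 1$, introduce the truncation
\begin{equation*}
\xi_{w,\eta,j}:=[a_{1,w,\eta},\ldots,a_{\lfloor w^j\rfloor,w,\eta},\overline{b+4j+2,\ldots,b+4j+2,b+4j+3}],
\end{equation*}
where the periodic block has length $\lfloor\eta w^j\rfloor$ and is built from the very partial quotients that actually appear between positions $\lfloor w^j\rfloor+1$ and $\lfloor w^{j+1}\rfloor$ in the defining sequence of $\xi_{w,\eta}$. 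Being ultimately periodic in the sense of Schneider, $\xi_{w,\eta,j}$ is quadratic over $\mathbb{Q}$, hence quadratic in $\mathbb{Q}_p$.

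The next step is to establish, by direct computation with Schneider's recursion and with the discriminant of the minimal polynomial of $\xi_{w,\eta,j}$, the three asymptotic identities
\begin{gather*}
\lim_{j\to\infty}\frac{-\log|\xi_{w,\eta}-\xi_{w,\eta,j}|_p}{\log H(\xi_{w,\eta,j})}=\frac{2w}{2+\eta},\qquad \lim_{j\to\infty}\frac{-\log|\xi_{w,\eta,j}-\xi_{w,\eta,j}'|_p}{\log H(\xi_{w,\eta,j})}=\frac{2}{2+\eta},\\
\limsup_{j\to\infty}\frac{\log H(\xi_{w,\eta,j+1})}{\log H(\xi_{w,\eta,j})}\leq w,
\end{gather*}
where $\xi_{w,\eta,j}'$ denotes the Galois conjugate. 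These mirror the estimates established in the proof of Theorem 1.2 of \cite{Ooto17}; in the $p$-adic framework the role formerly played by $|T^{\deg a_n}|=q^{\deg a_n}$ is now played by $|p|_p^{-a_n}=p^{a_n}$, and the growing residue classes $b+4j+r$ guarantee that no unintended cancellations arise when one computes the valuations of the numerator, denominator, and discriminant of $\xi_{w,\eta,j}$.

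Finally, since $w\geq 121d^2$ and $0<\eta<\sqrt{w}/d$ together imply $2dw\leq (2w/(2+\eta)-2)(2w/(2+\eta)-d)$, apply the $p$-adic analogue of Lemma \ref{lem:bestquad} with
\begin{equation*}
\delta=\rho=\frac{2w}{2+\eta}-d,\qquad\varepsilon=\chi=\frac{2}{2+\eta},\qquad\theta=w,
\end{equation*}
to conclude \eqref{eq:pmainconti2} simultaneously for all $2\leq n\leq d$. The main obstacle is the careful bookkeeping in step two: the exact valuation $v_p(\xi_{w,\eta,j}-\xi_{w,\eta,j}')=-\log_p|\xi_{w,\eta,j}-\xi_{w,\eta,j}'|_p$ must be pinned down to leading order, which is why the four residue classes $b+4j+r$ ($r\in\{0,1,2,3\}$) are used in the definition of $a_{n,w,\eta}$: they play the role that the three distinct polynomials $a,b,c$ play in Theorem \ref{thm:mainconti2}, preventing the pre-periodic and periodic blocks from interfering with the discriminant computation and thereby securing the sharp value $2/(2+\eta)$ for the gap $w_n(\xi_{w,\eta})-w_n^{*}(\xi_{w,\eta})$.
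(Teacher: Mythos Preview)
Your approach is essentially the paper's: define ultimately periodic Schneider continued fractions $\xi_{w,\eta,j}$, invoke the estimates from the proof of Theorem~2 in \cite{Bugeaud15} to obtain the three asymptotic identities, and then apply Lemma~\ref{lem:bestquadp} with exactly the parameters $\delta=\rho=\tfrac{2w}{2+\eta}-d$, $\varepsilon=\chi=\tfrac{2}{2+\eta}$, $\theta=w$ that you list. One small slip: the periodic block should be $\overline{a_{\lfloor w^j\rfloor+1,w,\eta},\ldots,a_{\lfloor w^j\rfloor+\lfloor\eta w^j\rfloor,w,\eta}}$, whose entries are $b+4j+\varepsilon_j$ (repeated $\lfloor\eta w^j\rfloor-1$ times) followed by $b+4j+2$, not $b+4j+2$ followed by $b+4j+3$ as you wrote---your verbal description (``the very partial quotients that actually appear'') is right, but the displayed values are off by one case in the defining scheme.
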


The definition and notation of the Schneider's $p$-adic continued fractions can be found in \cite{Bugeaud15}.
It seems that for each $d\geq 3$, the $p$-adic numbers $\xi $ defined by Theorems \ref{thm:pmainconti1} and \ref{thm:pmainconti2} are the first explicit continued fraction examples for which $w_d(\xi )$ and $w_d ^{*}(\xi )$ are different.

In what follows, we prepare lemmas in order to prove the above theorems.
We omit the details of proofs of these lemmas.

We denote by $\lZ[X]_{\min }$ the set of all non-constant, irreducible, primitive polynomials in $\lZ[X]$ whose leading coefficients are positive.
For $\xi \in \lR$ and an integer $n\geq 1$, we denote by $\tilde{w}_n(\xi )$ the supremum of the real numbers $w$ which satisfy
\begin{equation*}
0<|P(\xi )|\leq H(P)^{-w}
\end{equation*}
for infinitely many $P(X) \in \lZ[X]_{\min }$ of degree at most $n$.
For $\xi \in \lQ_p$ and an integer $n\geq 1$, we denote by $\tilde{w}_n(\xi )$ the supremum of the real numbers $w$ which satisfy
\begin{equation*}
0<|P(\xi )|_p\leq H(P)^{-w-1}
\end{equation*}
for infinitely many $P(X) \in \lZ[X]_{\min }$ of degree at most $n$.

Using Gelfond's Lemma (see Lemma A.3 in \cite{Bugeaud04}) instead of \eqref{eq:heighttwo}, we obtain an analogue of Lemma \ref{lem:weak} for real numbers.

\begin{lem}\label{lem:weakr}
	Let $n\geq 1$ be an integer and $\xi $ be a real number.
	Then we have
	\begin{equation*}
	w_n (\xi )=\tilde{w}_n (\xi ).
	\end{equation*}
\end{lem}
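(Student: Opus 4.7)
The inequality $\tilde w_n(\xi) \leq w_n(\xi)$ is immediate from the inclusion $\lZ[X]_{\min} \subset \lZ[X]$. For the reverse, my plan is to mimic the proof of Lemma \ref{lem:weak} (Lemma 5.3 in \cite{Ooto17}) and substitute Gelfond's Lemma for the identity \eqref{eq:heighttwo}, which fails over $\lZ$: Gelfond's Lemma provides a constant $c_n > 0$, depending only on $n$, with $c_n^{-1} H(P) \leq \prod_{i=1}^{k} H(P_i) \leq c_n H(P)$ whenever $P = P_1 \cdots P_k \in \lZ[X]$ has degree at most $n$.

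Fix $w < w_n(\xi)$ and choose an infinite sequence $(P^{(j)})_{j\geq 1}$ in $\lZ[X]$, of degree at most $n$, with $0 < |P^{(j)}(\xi)| \leq H(P^{(j)})^{-w}$ and $H(P^{(j)}) \to \infty$. Dividing each $P^{(j)}$ by its content only strengthens the inequality, so I may assume $P^{(j)}$ is primitive; Gauss's Lemma then yields an irreducible factorization $P^{(j)} = \pm P^{(j)}_1 \cdots P^{(j)}_{k_j}$ in $\lZ[X]_{\min}$. Setting $w_i := -\log |P^{(j)}_i(\xi)|/\log H(P^{(j)}_i)$ and choosing $i^*(j)$ to maximize $w_i$, the chain of inequalities $\sum_i w_i \log H(P^{(j)}_i) = -\log |P^{(j)}(\xi)| \geq w \log H(P^{(j)}) \geq w\bigl(\sum_i \log H(P^{(j)}_i) - \log c_n\bigr)$ gives
\[
w_{i^*(j)} \geq w - \frac{w \log c_n}{\sum_i \log H(P^{(j)}_i)} \longrightarrow w
\]
as $j \to \infty$, so $|P^{(j)}_{i^*(j)}(\xi)| \leq H(P^{(j)}_{i^*(j)})^{-w + o(1)}$.

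The main obstacle is that the heights $H(P^{(j)}_{i^*(j)})$ might remain bounded, in which case pigeonholing produces a fixed $Q \in \lZ[X]_{\min}$ with $P^{(j)}_{i^*(j)} = Q$ for infinitely many $j$, and no new approximating polynomials are produced. I would handle this case by induction on $n$. Writing $P^{(j)} = Q R^{(j)}$, the identity $|P^{(j)}(\xi)| = |Q(\xi)|\,|R^{(j)}(\xi)|$ together with the elementary bound $H(P^{(j)}) \leq (n+1) H(Q) H(R^{(j)})$ yields $|R^{(j)}(\xi)| \leq C\, H(R^{(j)})^{-w}$ for a constant $C$ depending only on $Q$, with $\deg R^{(j)} \leq n - 1$ and $H(R^{(j)}) \to \infty$. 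Hence $w_{n-1}(\xi) \geq w$, and the inductive hypothesis $\tilde w_{n-1}(\xi) = w_{n-1}(\xi)$ together with the monotonicity $\tilde w_{n-1} \leq \tilde w_n$ gives $\tilde w_n(\xi) \geq w$. The base case $n = 1$ is trivial since a primitive linear polynomial in $\lZ[X]$ lies in $\lZ[X]_{\min}$ after a sign change. Letting $w \uparrow w_n(\xi)$ completes the argument.
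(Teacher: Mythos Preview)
Your argument is correct and follows exactly the route the paper indicates: adapt the proof of Lemma~\ref{lem:weak} (Lemma~5.3 in \cite{Ooto17}) by replacing the exact multiplicativity $H(PQ)=H(P)H(Q)$ with Gelfond's Lemma, and the paper itself gives no further detail than this. One cosmetic point worth tidying in your writeup: the ratio $w_i=-\log|P_i^{(j)}(\xi)|/\log H(P_i^{(j)})$ is undefined when $H(P_i^{(j)})=1$, but since there are only finitely many irreducible integer polynomials of height~$1$ and degree at most~$n$, their total contribution to $-\log|P^{(j)}(\xi)|$ is bounded by a constant depending only on $\xi$ and $n$, and your averaging inequality goes through unchanged once that constant is absorbed.
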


Using Gelfond's Lemma and the fact that $|a||a|_p\geq 1$ for non-zero integers $a$, we obtain an analogue of Lemma \ref{lem:weak} for $p$-adic numbers.

\begin{lem}\label{lem:weakp}
	Let $n\geq 1$ be an integer and $\xi $ be in $\lQ _p$.
	Then we have
	\begin{equation*}
	w_n (\xi )=\tilde{w}_n (\xi ).
	\end{equation*}
\end{lem}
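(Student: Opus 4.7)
The plan is to adapt the argument used for Lemma \ref{lem:weak} to the $p$-adic setting. The inclusion $\tilde{w}_n(\xi)\leq w_n(\xi)$ is immediate from the definitions. For the reverse inequality, I would fix $w<w_n(\xi)$, take infinitely many $P\in\lZ[X]$ of degree at most $n$ with $0<|P(\xi)|_p\leq H(P)^{-w-1}$ (passing if needed to a subsequence with $H(P)\to\infty$), and from each such $P$ extract an irreducible primitive factor $Q\in\lZ[X]_{\min}$ witnessing a comparable quality of approximation.

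First, write $P=\pm c\,P_1\cdots P_r$ with $c=\operatorname{cont}(P)\in\lZ_{\geq 1}$ and $P_i\in\lZ[X]_{\min}$, so that $\sum_{i}\deg P_i\leq n$. Then
\begin{equation*}
|P(\xi)|_p=|c|_p\prod_{i=1}^{r}|P_i(\xi)|_p,
\end{equation*}
and Gelfond's Lemma (Lemma A.3 in \cite{Bugeaud04}) yields $c\prod_{i}H(P_i)\leq 2^{n}H(P)$. Rewriting $|c|\,|c|_p\geq 1$ as $|c|_p\geq c^{-1}$, I obtain
\begin{equation*}
\prod_{i=1}^{r}|P_i(\xi)|_p\leq c\,H(P)^{-w-1}\leq \frac{2^{n}\,H(P)^{-w}}{\prod_{i=1}^{r}H(P_i)}.
\end{equation*}
Setting $\lambda_i=\log H(P_i)$ and $\mu_i=-\log|P_i(\xi)|_p$, this becomes
\begin{equation*}
\sum_{i}\mu_i\geq (w+1)\sum_{i}\lambda_i-(w+1)\,n\log 2,
\end{equation*}
and a weighted pigeonhole argument (restricted to indices with $\lambda_i>0$) produces an index $i_0$ with $\mu_{i_0}/\lambda_{i_0}\geq (w+1)-(w+1)\,n\log 2/\sum_{i}\lambda_i$; that is, $|P_{i_0}(\xi)|_p\leq H(P_{i_0})^{-(w+1)+\varepsilon}$ for any $\varepsilon>0$ once $\sum_{i}\lambda_i$ is large enough.

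The main obstacle is to guarantee that this extraction, applied to the sequence $P^{(k)}$, produces infinitely many distinct irreducible factors $Q_k=P_{i_0(k)}^{(k)}$ of unbounded height rather than collapsing to a fixed polynomial. I plan to resolve this by a dichotomy: if $\sum_{i}\lambda_{i}^{(k)}$ remained bounded along some subsequence, then $\prod_{i}P_{i}^{(k)}$ would take only finitely many values, so $\prod_{i}|P_{i}^{(k)}(\xi)|_p$ would stay bounded below by a positive constant; combined with $|P^{(k)}(\xi)|_p\geq c_k^{-1}\prod_{i}|P_i^{(k)}(\xi)|_p$ and $|P^{(k)}(\xi)|_p\leq H(P^{(k)})^{-w-1}\asymp c_k^{-w-1}$, this would force $c_k$, and therefore $H(P^{(k)})$, to be bounded, contradicting $H(P^{(k)})\to\infty$. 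Hence $\sum_{i}\lambda_{i}^{(k)}\to\infty$, and restricting the weighted average to indices with unbounded $\lambda_i$ yields $Q_k\in\lZ[X]_{\min}$ with $H(Q_k)\to\infty$ and $|Q_k(\xi)|_p\leq H(Q_k)^{-w-1+o(1)}$. Letting $w\nearrow w_n(\xi)$ gives $\tilde{w}_n(\xi)\geq w_n(\xi)$, completing the proof.
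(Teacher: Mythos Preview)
Your approach is correct and coincides with the paper's: the paper simply says to adapt the proof of Lemma~\ref{lem:weak} using Gelfond's Lemma together with the inequality $|a|\,|a|_p\geq 1$ for non-zero integers~$a$, and this is exactly the factorisation/pigeonhole argument you outline (with the content handled via $|c|_p\geq c^{-1}$). Your dichotomy ensuring that the extracted irreducible factors have unbounded height is the one place where extra care beyond the Laurent-series case is needed, and you treat it correctly.
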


Analogues of Lemma \ref{lem:poly} for real and $p$-adic numbers follow in a similar way to the proof of Lemma \ref{lem:poly}.

\begin{lem}\label{lem:polyr}
	Let $\alpha ,\beta $ be real numbers and $P(X) \in \lZ[X]$ be a non-constant polynomial of degree $d$.
	Let $C\geq 0$ be a real number.
	Assume that $|\alpha -\beta |\leq C$.
	Then there exists a positive constant $C_2(\alpha ,C,d)$, depending only on $\alpha ,C,$ and $d$ such that
	\begin{equation*}
	|P(\alpha )-P(\beta )|\leq C_2(\alpha ,C,d)|\alpha -\beta |H(P).
	\end{equation*}
\end{lem}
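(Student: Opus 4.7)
The plan is to follow the same blueprint as the proof of Lemma \ref{lem:poly}, but replace the non-archimedean step by a triangle-inequality bound. Write $P(X)=\sum_{i=0}^{d}a_iX^i$ with $a_i\in\lZ$, so that
\begin{equation*}
P(\alpha)-P(\beta)=\sum_{i=1}^{d}a_i(\alpha^i-\beta^i),
\end{equation*}
and apply the telescoping identity $\alpha^i-\beta^i=(\alpha-\beta)\sum_{j=0}^{i-1}\alpha^{j}\beta^{i-1-j}$ to each term. This reduces everything to controlling the inner sum uniformly in $i\leq d$.

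Next I would use the hypothesis $|\alpha-\beta|\leq C$ to bound $|\beta|\leq|\alpha|+C$, so that $\max(|\alpha|,|\beta|)\leq|\alpha|+C$. Then
\begin{equation*}
|\alpha^i-\beta^i|\leq|\alpha-\beta|\sum_{j=0}^{i-1}|\alpha|^{j}|\beta|^{i-1-j}\leq i(|\alpha|+C)^{i-1}|\alpha-\beta|,
\end{equation*}
and putting $|a_i|\leq H(P)$ yields
\begin{equation*}
|P(\alpha)-P(\beta)|\leq H(P)|\alpha-\beta|\sum_{i=1}^{d}i(|\alpha|+C)^{i-1}.
\end{equation*}

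The constant $C_2(\alpha,C,d):=\sum_{i=1}^{d}i(|\alpha|+C)^{i-1}$ depends only on $\alpha,C,d$, giving the desired inequality. There is no real obstacle here; the only point to keep in mind is that the archimedean triangle inequality forces an extra combinatorial factor (the $i$ in the sum) compared with the non-archimedean bound in Lemma \ref{lem:poly}, but this factor is absorbed into the $\alpha,C,d$-dependent constant and plays no role in the subsequent applications.
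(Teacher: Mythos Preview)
Your proof is correct and follows essentially the same approach as the paper, which does not spell out the argument but simply states that the real and $p$-adic analogues ``follow in a similar way to the proof of Lemma~\ref{lem:poly}.'' You have accurately carried out this adaptation, correctly noting that the archimedean triangle inequality introduces an extra combinatorial factor (the sum over $i$ and the factor $i$) absent in the ultrametric case, which is harmlessly absorbed into $C_2(\alpha,C,d)$.
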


\begin{lem}\label{lem:polyp}
	Let $\alpha ,\beta $ be in $\lQ _p$ and $P(X) \in \lZ[X]$ be a non-constant polynomial of degree $d$.
	Let $C\geq 0$ be a real number.
	Assume that $|\alpha -\beta |_p\leq C$.
	Then there exists a positive constant $C_3(\alpha ,C,d)$, depending only on $\alpha ,C,$ and $d$ such that
	\begin{equation*}
	|P(\alpha )-P(\beta )|_p\leq C_3(\alpha ,C,d)|\alpha -\beta |_p.
	\end{equation*}
\end{lem}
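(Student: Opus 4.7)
The plan is to mirror the proof of Lemma \ref{lem:poly} almost verbatim, with one simplification. Since $P(X)\in \lZ[X]$, every coefficient $a_i$ of $P$ satisfies $|a_i|_p\leq 1$. Consequently, applying the (non-Archimedean) ultrametric inequality to the expansion $P(\alpha)-P(\beta)=\sum_{i=1}^{d} a_i(\alpha^i-\beta^i)$ yields
\begin{equation*}
|P(\alpha)-P(\beta)|_p\leq \max_{1\leq i\leq d}|a_i|_p\,|\alpha^i-\beta^i|_p\leq \max_{1\leq i\leq d}|\alpha^i-\beta^i|_p,
\end{equation*}
and the factor $H(P)$ that appeared in Lemma \ref{lem:poly} disappears.

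Next, I would factor $\alpha^i-\beta^i=(\alpha-\beta)\sum_{j=0}^{i-1}\alpha^j\beta^{i-1-j}$ and apply the ultrametric inequality once more to get $|\alpha^i-\beta^i|_p\leq |\alpha-\beta|_p\max_{0\leq j\leq i-1}|\alpha|_p^{j}|\beta|_p^{i-1-j}$. To control $|\beta|_p$ in terms of data depending only on $\alpha$ and $C$, use the hypothesis $|\alpha-\beta|_p\leq C$ together with the ultrametric inequality:
\begin{equation*}
|\beta|_p\leq \max\bigl(|\alpha|_p,|\alpha-\beta|_p\bigr)\leq \max\bigl(|\alpha|_p,C\bigr).
\end{equation*}
Therefore $|\alpha|_p^{j}|\beta|_p^{i-1-j}\leq \max(|\alpha|_p,C)^{i-1}$ for every $0\leq j\leq i-1$, and combining the two steps gives
\begin{equation*}
|P(\alpha)-P(\beta)|_p\leq |\alpha-\beta|_p\,\max_{1\leq i\leq d}\max\bigl(|\alpha|_p,C\bigr)^{i-1}.
\end{equation*}

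Taking $C_3(\alpha,C,d):=\max\bigl(1,\max(|\alpha|_p,C)^{d-1}\bigr)$, which depends only on $\alpha$, $C$, and $d$, finishes the proof. I do not expect any substantive obstacle; the argument is a direct $p$-adic transcription of Lemma \ref{lem:poly}, and in fact it is cleaner because the non-Archimedean property of $|\cdot|_p$ replaces both the Laurent-series ultrametric step and the height factor with a single application of the ultrametric inequality.
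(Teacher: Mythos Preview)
Your proof is correct and follows exactly the route the paper indicates: the paper does not spell out a proof but simply remarks that the $p$-adic analogue ``follows in a similar way to the proof of Lemma~\ref{lem:poly},'' and your argument is precisely that transcription, including the observation that $|a_i|_p\le 1$ for integer coefficients is what removes the $H(P)$ factor present in Lemma~\ref{lem:poly}.
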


Lemmas \ref{lem:weak}, \ref{lem:poly}, \ref{lem:height} \eqref{eq:heighteq}, and  \ref{lem:Galois}--\ref{lem:Lio.inequ2} are used in the proof of Lemma \ref{lem:bestquad}.
Analogues of Lemmas \ref{lem:Galois}--\ref{lem:Lio.inequ2} for real numbers and Lemmas \ref{lem:Galois} and \ref{lem:Lio.inequ2} for $p$-adic numbers are already known (see p.730 in \cite{Bugeaud12}, Theorem A.1 and Corollary A.2 in \cite{Bugeaud04}, and Lemmas 3.2 and 2.5 in \cite{Pejkovic12}).
Slightly modifying the proof of Lemma 2.5 in \cite{Pejkovic12}, we obtain an analogue of Lemma \ref{lem:Lio.inequ1} for $p$-adic numbers.
That is, for non-constant polynomial $P(X)\in \lZ [X]$ of degree $m$ and algebraic number $\alpha \in \lQ_p$ of degree $n$,
\begin{equation}\label{eq:liop}
P(\alpha )=0 \text{ or } |P(\alpha )|_p\geq cH(P)^{-n}H(\alpha )^{-m},
\end{equation}
where $c$ is a positive constant depending only on $m$ and $n$.
The inequality \eqref{eq:liop} is probably known.
However, we were unable to find it in the literature.

Using Lemma A.2 in \cite{Bugeaud04} instead of \eqref{eq:heighteq}, in addition to analogues of \ref{lem:weak}, \ref{lem:poly}, and  \ref{lem:Galois}--\ref{lem:Lio.inequ2}, we obtain an analogue of Lemma \ref{lem:bestquad} for real numbers.

\begin{lem}\label{lem:bestquadr}
	Let $d\geq 2$ be an integer.
	Let $\xi $ be a real number, and $\theta ,\rho ,\delta $ be positive numbers.
	Assume that there exists a sequence $(\alpha _j)_{j\geq 1}$ such that for any $j\geq 1$, $\alpha _j\in \lR$ is quadratic, $(H(\alpha _j))_{j\geq 1}$ is a divergent increasing sequence, and
	\begin{gather*}
	\limsup_{j\rightarrow \infty } \frac{\log H(\alpha _{j+1})}{\log H(\alpha _j)}\leq \theta ,\\
	d+\delta \leq \liminf_{j\rightarrow \infty } \frac{-\log |\xi -\alpha _j|}{\log H(\alpha _j)},\quad \limsup_{j\rightarrow \infty } \frac{-\log |\xi -\alpha _j|}{\log H(\alpha _j)}\leq d+\rho .
	\end{gather*}
	If $2d\theta \leq (d-2+\rho )\delta $, then we have for all $2\leq n\leq d$,
	\begin{equation*}
	d-1+\delta \leq w_n ^{*}(\xi )\leq d-1+\rho .
	\end{equation*}
	Furthermore, assume that there exist a non-negative number $\varepsilon $ and a positive number $c$ such that for any $j\geq 1, 0<|\alpha _j-\alpha _j '|\leq c$ and
	\begin{equation*}
	\limsup_{j\rightarrow \infty }\frac{-\log |\alpha _j -\alpha _j '|}{\log H(\alpha _j)}\geq \varepsilon .
	\end{equation*}
	If $2d\theta \leq (d-2+\delta )\delta $, then we have for all $2\leq n\leq d$,
	\begin{equation*}
	d-1+\delta \leq w_n ^{*}(\xi )\leq d-1+\rho ,\quad \varepsilon \leq w_n(\xi )-w_n ^{*}(\xi ).
	\end{equation*}
	Finally, assume that there exists a non-negative number $\chi $ such that
	\begin{equation*}
	\limsup_{i\rightarrow \infty} \frac{-\log |\alpha _i-\alpha _i '|}{\log H(\alpha _i)}\leq \chi .
	\end{equation*}
	Then we have for all $2\leq n\leq d$,
	\begin{equation*}
	d-1+\delta \leq w_n ^{*}(\xi )\leq d-1+\rho ,\quad \varepsilon \leq w_n(\xi )-w_n ^{*}(\xi )\leq \chi .
	\end{equation*}
\end{lem}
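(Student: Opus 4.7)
The plan is to transport the proof of Lemma \ref{lem:bestquad} line by line to the archimedean setting, substituting for each ingredient from the positive-characteristic argument its real-number counterpart: Lemma \ref{lem:weak} is replaced by Lemma \ref{lem:weakr}, Lemma \ref{lem:poly} by Lemma \ref{lem:polyr}, the Liouville inequalities (Lemmas \ref{lem:Lio.inequ1}, \ref{lem:Lio.inequ2}) by Theorem A.1 and Corollary A.2 in \cite{Bugeaud04}, the Galois-conjugate bound (Lemma \ref{lem:Galois}) by its real analogue on p.~730 of \cite{Bugeaud12}, and the height formula \eqref{eq:heighteq} by Lemma A.2 in \cite{Bugeaud04}. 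The first step is to fix $\iota \in (0,\delta)$ and extract, from the three limsup/liminf hypotheses, an integer $c_0$ past which
\[
H(\alpha_j)^{-d-\rho-\iota} \leq |\xi - \alpha_j| \leq H(\alpha_j)^{-d-\delta+\iota}, \qquad H(\alpha_j) \leq H(\alpha_{j+1}) \leq H(\alpha_j)^{\theta+\iota}
\]
hold simultaneously. Letting $\iota \to 0$ already gives $w_n^*(\xi) \geq d-1+\delta$ for every $n \geq 2$.

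For the upper bound $w_d^*(\xi) \leq d-1+\rho$ under $2d\theta \leq (d-2+\rho)\delta$, I would take any algebraic $\alpha$ of degree at most $d$ with $\alpha \notin \{\alpha_j\}$ and $H(\alpha)$ exceeding the explicit threshold $H(\alpha_{c_0})^{\delta/(2\theta)}$. Define $j_0 \geq c_0$ by the sandwich $H(\alpha_{j_0}) \leq H(\alpha)^{2(\theta+\iota)/(\delta-\iota)} < H(\alpha_{j_0+1})$. The real Liouville inequality between $\alpha$ and the quadratic $\alpha_{j_0}$ gives $|\alpha - \alpha_{j_0}| \gg H(\alpha)^{-2} H(\alpha_{j_0})^{-d}$, and the chosen exponent condition forces this to exceed $|\xi - \alpha_{j_0}|$, so that $|\xi - \alpha| = |\alpha - \alpha_{j_0}|$ and one obtains the desired inequality after letting $\iota \to 0$. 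Under the strengthened condition $2d\theta \leq (d-2+\delta)\delta$, the same sandwich shows that the sequence $(\alpha_j)$ already realizes the best approximation, hence
\[
w_d^*(\xi) = \limsup_{j \to \infty} \frac{-\log |\xi - \alpha_j|}{\log H(\alpha_j)} - 1.
\]
Writing $P_j(X) = A_j(X - \alpha_j)(X - \alpha_j')$ for the minimal polynomial of $\alpha_j$, the boundedness $|\alpha_j - \alpha_j'| \leq c$ forces $\max(1,|\xi|)$, $\max(1,|\alpha_j|)$, $\max(1,|\alpha_j'|)$ to be comparable, so Lemma A.2 in \cite{Bugeaud04} yields $H(P_j) \asymp |A_j|$. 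Combined with $|\xi - \alpha_j| < |\alpha_j - \alpha_j'|$ (which itself follows from the real analogue of Lemma \ref{lem:Galois}) and $|\xi - \alpha_j'| = |\alpha_j - \alpha_j'|$, this gives $|P_j(\xi)| \asymp H(P_j) |\xi - \alpha_j| |\alpha_j - \alpha_j'|$, from which the assumed liminf/limsup on $|\alpha_j - \alpha_j'|$ yields $w_2(\xi) \geq w_d^*(\xi) + \varepsilon$, and monotonicity $w_2 \leq w_n$ concludes the lower bound on $w_n - w_n^*$.

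For the final upper bound $w_n(\xi) - w_n^*(\xi) \leq \chi$ under \eqref{eq:kai}'s analogue, I would restrict via Lemma \ref{lem:weakr} to $P(X) \in \lZ[X]_{\min}$ of degree at most $d$. If $P$ vanishes at some $\alpha_{j_1}$ then $P$ is (up to sign) the minimal polynomial $P_{j_1}$, and $|P(\xi)|$ is controlled by the formula of the previous paragraph, contributing at most $w_2^*(\xi) + \chi$. Otherwise, choose $j_1 \geq c_0$ with $H(\alpha_{j_1}) \leq (c_1 H(P)^2)^{(\theta+\iota)/(\delta-\iota)} < H(\alpha_{j_1+1})$, use Lemma \ref{lem:polyr} to estimate $|P(\xi) - P(\alpha_{j_1})|$, and invoke the real Liouville inequality $|P(\alpha_{j_1})| \gg H(\alpha_{j_1})^{-d} H(P)^{-1}$ to deduce $|P(\xi)| = |P(\alpha_{j_1})| \gg H(P)^{-1 - 2d(\theta+\iota)/(\delta-\iota)}$. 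Taking $\iota \to 0$ and combining the two cases gives $w_d(\xi) \leq w_2^*(\xi) + \chi$, as required. The main obstacle is that, unlike \eqref{eq:heighttwo}, heights over $\lZ$ are not exactly multiplicative, so Gelfond's lemma (Lemma A.3 of \cite{Bugeaud04}) only yields height relations up to constants; the proof therefore has to absorb these $\asymp$-factors into the $\iota$-slack throughout, and the proof of Lemma \ref{lem:weakr} must be invoked to reduce to $\lZ[X]_{\min}$ — a point where the sharper characteristic-$p$ identity \eqref{eq:heighttwo} had to be traded for the weaker Gelfond-type estimate without damaging the limsup control on the exponent.
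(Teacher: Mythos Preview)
Your proposal is correct and follows exactly the approach the paper itself takes: the paper's entire proof of Lemma~\ref{lem:bestquadr} consists of the single sentence that one obtains it from the proof of Lemma~\ref{lem:bestquad} by replacing \eqref{eq:heighteq} with Lemma~A.2 of \cite{Bugeaud04} and using the real analogues of Lemmas~\ref{lem:weak}, \ref{lem:poly}, and \ref{lem:Galois}--\ref{lem:Lio.inequ2}, which is precisely the substitution scheme you lay out. The only cosmetic point is that the identities $|\xi-\alpha|=|\alpha-\alpha_{j_0}|$, $|\xi-\alpha_j'|=|\alpha_j-\alpha_j'|$, and $|P(\xi)|=|P(\alpha_{j_1})|$ in your sketch are ultrametric and must be read as $\asymp$ (via the ordinary triangle inequality) over $\lR$, but you already note at the end that these constant factors are to be absorbed into the $\iota$-slack, so no genuine gap remains.
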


\begin{lem}\label{lem:bestquadp}
	Let $d\geq 2$ be an integer.
	Let $\xi $ be in $\lQ _p$ with $|\xi |_p\leq 1$ and $\theta ,\rho ,\delta $ be positive numbers.
	Assume that there exists a sequence $(\alpha _j)_{j\geq 1}$ such that for any $j\geq 1$, $\alpha _j\in \lQ_p$ is quadratic, $(H(\alpha _j))_{j\geq 1}$ is a divergent increasing sequence, and
	\begin{gather*}
	\limsup_{j\rightarrow \infty } \frac{\log H(\alpha _{j+1})}{\log H(\alpha _j)}\leq \theta ,\\
	d+\delta \leq \liminf_{j\rightarrow \infty } \frac{-\log |\xi -\alpha _j|_p}{\log H(\alpha _j)},\quad \limsup_{j\rightarrow \infty } \frac{-\log |\xi -\alpha _j|_p}{\log H(\alpha _j)}\leq d+\rho .
	\end{gather*}
	If $2d\theta \leq (d-2+\rho )\delta $, then we have for all $2\leq n\leq d$,
	\begin{equation*}
	d-1+\delta \leq w_n ^{*}(\xi )\leq d-1+\rho .
	\end{equation*}
	Furthermore, assume that there exists a non-negative number $\varepsilon $ such that for any $j\geq 1, 0<|\alpha _j-\alpha _j '|_p\leq 1$ and
	\begin{equation*}
	\limsup_{j\rightarrow \infty }\frac{-\log |\alpha _j -\alpha _j '|_p}{\log H(\alpha _j)}\geq \varepsilon .
	\end{equation*}
	If $2d\theta \leq (d-2+\delta )\delta $, then we have for all $2\leq n\leq d$,
	\begin{equation*}
	d-1+\delta \leq w_n ^{*}(\xi )\leq d-1+\rho ,\quad \varepsilon \leq w_n(\xi )-w_n ^{*}(\xi ).
	\end{equation*}
	Finally, assume that there exists a non-negative number $\chi $ such that
	\begin{equation*}
	\limsup_{i\rightarrow \infty} \frac{-\log |\alpha _i-\alpha _i '|_p}{\log H(\alpha _i)}\leq \chi .
	\end{equation*}
	Then we have for all $2\leq n\leq d$,
	\begin{equation*}
	d-1+\delta \leq w_n ^{*}(\xi )\leq d-1+\rho ,\quad \varepsilon \leq w_n(\xi )-w_n ^{*}(\xi )\leq \chi .
	\end{equation*}
\end{lem}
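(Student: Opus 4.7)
The plan is to mirror the proof of Lemma \ref{lem:bestquad} step by step, substituting each ingredient by its $p$-adic counterpart. The non-Archimedean triangle inequality works verbatim for $|\cdot |_p$, so the central dichotomy arguments of the form ``if $|\xi -\alpha _j|_p<|\alpha -\alpha _j|_p$, then $|\xi -\alpha |_p=|\alpha -\alpha _j|_p$'' transfer unchanged. The tools will be Lemma \ref{lem:weakp} in place of Lemma \ref{lem:weak}, Lemma \ref{lem:polyp} in place of Lemma \ref{lem:poly}, the inequality \eqref{eq:liop} together with Lemma~2.5 of \cite{Pejkovic12} in place of Lemmas \ref{lem:Lio.inequ1} and \ref{lem:Lio.inequ2}, and Lemma~3.2 of \cite{Pejkovic12} as the $p$-adic analogue of Lemma \ref{lem:Galois}.

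For the lower bound $w_n ^{*}(\xi )\geq d-1+\delta $ in all three parts, the sequence $(\alpha _j)_{j\geq 1}$ directly witnesses the required approximation quality via the definition. For the upper bound $w_n ^{*}(\xi )\leq d-1+\rho $ under $2d\theta \leq (d-2+\rho )\delta $, I take an algebraic $\alpha \in \lQ _p$ of degree at most $d$ with $\alpha \neq \alpha _j$ for all $j$ and calibrate an index $j_0$ so that $H(\alpha _{j_0})\leq H(\alpha )^{2(\theta +\iota )/(\delta -\iota )}<H(\alpha _{j_0+1})$. The $p$-adic Liouville lower bound on $|\alpha -\alpha _{j_0}|_p$ then strictly exceeds the approximation bound on $|\xi -\alpha _{j_0}|_p$, so the ultrametric inequality yields $|\xi -\alpha |_p=|\alpha -\alpha _{j_0}|_p$, which finishes the bound exactly as in the Laurent case.

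For the second and third parts I intend to exploit a simplification specific to $\lQ _p$: since $|\xi |_p\leq 1$ and $|\xi -\alpha _j|_p\rightarrow 0$, the non-Archimedean triangle inequality forces $|\alpha _j|_p\leq 1$, and under the hypothesis $|\alpha _j-\alpha _j '|_p\leq 1$ also $|\alpha _j '|_p\leq 1$. Writing the minimal polynomial of $\alpha _j$ as $P_j(X)=A_jX^2+B_jX+C_j\in \lZ [X]$, primitivity of $P_j$ together with $-B_j/A_j=\alpha _j+\alpha _j '$ and $C_j/A_j=\alpha _j\alpha _j '$ being $p$-adic integers forces $|A_j|_p=1$. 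Hence $|P_j(\xi )|_p=|\xi -\alpha _j|_p \cdot |\xi -\alpha _j '|_p$, and since $|\xi -\alpha _j|_p<|\alpha _j-\alpha _j '|_p$ for large $j$ (by Lemma~3.2 of \cite{Pejkovic12}), one gets $|\xi -\alpha _j '|_p=|\alpha _j-\alpha _j '|_p$ and therefore $|P_j(\xi )|_p=|\xi -\alpha _j|_p \cdot |\alpha _j-\alpha _j '|_p$. This is the key identity that converts the hypotheses on $|\alpha _j-\alpha _j '|_p$ into two-sided bounds on $|P_j(\xi )|_p/H(P_j)^{w+1}$, and it yields the lower and upper estimates for $w_n(\xi )-w_n ^{*}(\xi )$ along the lines of Lemma \ref{lem:bestquad}.

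The point requiring the most care is the interaction between the Archimedean height $H(P_j)$, which governs the definition of $w_n$, and the $p$-adic absolute values that control $|P_j(\xi )|_p$. In the Laurent case this is handled cleanly by the multiplicativity \eqref{eq:heighttwo} and the product formula \eqref{eq:heighteq}; in the $p$-adic setting one must instead invoke Gelfond's Lemma (Lemma~A.3 in \cite{Bugeaud04}) when comparing $|P(\xi )|_p$ with the normalisation $H(P)^{-w-1}$, and rely on the identity $|A_j|_p=1$ just noted for the quadratic minimal polynomials. With these in hand, the final upper bound on $w_d(\xi )$ follows from the two-case argument of Lemma \ref{lem:bestquad}: for $P\in \lZ [X]_{\min }$ of degree at most $d$ with $P(\alpha _j)\neq 0$ for every $j$ and $H(P)$ sufficiently large, pick $j_1$ with $H(\alpha _{j_1})\asymp H(P)^{2(\theta +\iota )/(\delta -\iota )}$, combine \eqref{eq:liop} with Lemma \ref{lem:polyp}, and conclude $|P(\xi )|_p=|P(\alpha _{j_1})|_p$ by the ultrametric inequality.
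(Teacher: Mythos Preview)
Your proposal is correct and follows essentially the same approach as the paper. The paper's own treatment is a brief note that the proof transfers from Lemma~\ref{lem:bestquad} via the $p$-adic analogues of Lemmas~\ref{lem:weak}, \ref{lem:poly}, and \ref{lem:Galois}--\ref{lem:Lio.inequ2}, together with the key observation that $|A_j|_p=1$ for the leading coefficient of the minimal polynomial $P_j$ of $\alpha_j$ (for which the paper cites the proof of Lemma~5 in \cite{Bugeaud15}, whereas you supply the direct primitivity argument); your write-up simply unpacks these ingredients in more detail.
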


Note that the proof of Lemma \ref{lem:bestquadp} uses analogues of Lemmas \ref{lem:weak}, \ref{lem:poly}, and  \ref{lem:Galois}--\ref{lem:Lio.inequ2} and the following:
Let $P_j(X)=A_j(X-\alpha _j)(X-\alpha _j')\in \lZ[X]_{\min }$ be the minimal polynomial of $\alpha _j$.
Then, by the proof of Lemma 5 in \cite{Bugeaud15}, we have $|A_j|_p=1$ for sufficiently large $j$.

\begin{proof}[Proof of Theorem \ref{thm:rmainconti1}]
	For any $j\geq 2$, we put
	\begin{equation*}
	\xi _{w,j}:=[0,a_{1,w},\ldots ,a_{\lfloor w^j\rfloor ,w},\overline{a}]\in \lR.
	\end{equation*}
	It follows from the proof of Theorem 4.1 in \cite{Bugeaud11} that $\xi _{w,j}$ are quadratic irrationals, $(H(\xi _{w,j}))_{j\geq 1}$ is a divergent increasing sequence, and
	\begin{gather*}
	\lim _{j\rightarrow \infty }\frac{-\log |\xi _w -\xi _{w,j}|}{\log H(\xi _{w,j})}=w,\quad \lim _{j\rightarrow \infty }\frac{-\log |\xi _{w,j} -\xi _{w,j} '|}{\log H(\xi _{w,j})}=1,\\
	\limsup _{j\rightarrow \infty }\frac{\log H(\xi _{w,j+1})}{\log H(\xi _{w,j})}\leq w.
	\end{gather*}
	Hence, we have \eqref{eq:rmainconti1} by Lemma \ref{lem:bestquadr}.
\end{proof}

\begin{proof}[Proof of Theorem \ref{thm:rmainconti2}]
	For any $j\geq 2$, we put
	\begin{equation*}
	\xi _{w,\eta, j}:=[0,a_{1,w,\eta },\ldots ,a_{\lfloor w^j\rfloor ,w,\eta },\overline{a,\ldots ,a,c}]\in \lR,
	\end{equation*}
	where the length of the periodic part is $\lfloor \eta w^j\rfloor $.
	It follows from the proof of Theorem 4.3 in \cite{Bugeaud11} that $\xi _{w,\eta ,j}$ are quadratic irrationals, $(H(\xi _{w,\eta ,j}))_{j\geq 1}$ is a divergent increasing sequence, and
	\begin{gather*}
	\lim _{j\rightarrow \infty }\frac{-\log |\xi _{w,\eta } -\xi _{w,\eta ,j}|}{\log H(\xi _{w,\eta ,j})}=\frac{2 w}{2+\eta },\quad \lim _{j\rightarrow \infty }\frac{-\log |\xi _{w,\eta ,j} -\xi _{w,\eta ,j} '|}{\log H(\xi _{w,\eta ,j})}=\frac{2}{2+\eta },\\
	\limsup _{j\rightarrow \infty }\frac{\log H(\xi _{w,\eta ,j+1})}{\log H(\xi _{w,\eta ,j})}\leq w.
	\end{gather*}
	Hence, we have \eqref{eq:rmainconti2} by Lemma \ref{lem:bestquadr}.
\end{proof}

\begin{proof}[Proof of Theorem \ref{thm:pmainconti1}]
	For any $j\geq 2$, we put
	\begin{equation*}
	\xi _{w,j}:=[a_{1,w},\ldots ,a_{\lfloor w^j\rfloor ,w},\overline{a_{\lfloor w^j\rfloor +1,w}}]\in \lQ_p.
	\end{equation*}
	It follows from the proof of Theorem 1 in \cite{Bugeaud15} that $\xi _{w,j}$ are quadratic irrationals, $(H(\xi _{w,j}))_{j\geq 1}$ is a divergent increasing sequence, and
	\begin{gather*}
	\lim _{j\rightarrow \infty }\frac{-\log |\xi _w -\xi _{w,j}|_p}{\log H(\xi _{w,j})}=w,\quad \lim _{j\rightarrow \infty }\frac{-\log |\xi _{w,j} -\xi _{w,j} '|_p}{\log H(\xi _{w,j})}=1,\\
	\limsup _{j\rightarrow \infty }\frac{\log H(\xi _{w,j+1})}{\log H(\xi _{w,j})}\leq w.
	\end{gather*}
	Hence, we have \eqref{eq:pmainconti1} by Lemma \ref{lem:bestquadp}.
\end{proof}

\begin{proof}[Proof of Theorem \ref{thm:pmainconti2}]
	For any $j\geq 2$, we put
	\begin{equation*}
	\xi _{w,\eta, j}:=[a_{1,w,\eta },\ldots ,a_{\lfloor w^j\rfloor ,w,\eta },\overline{a_{\lfloor w^j\rfloor +1,w,\eta },\ldots ,a_{\lfloor w^j\rfloor +\lfloor \eta w^j\rfloor,w,\eta }}]\in \lQ _p.
	\end{equation*}
	It follows from the proof of Theorem 2 in \cite{Bugeaud15} that $\xi _{w,\eta ,j}$ are quadratic irrationals, $(H(\xi _{w,\eta ,j}))_{j\geq 1}$ is a divergent increasing sequence, and
	\begin{gather*}
	\lim _{j\rightarrow \infty }\frac{-\log |\xi _{w,\eta } -\xi _{w,\eta ,j}|_p}{\log H(\xi _{w,\eta ,j})}=\frac{2 w}{2+\eta },\quad \lim _{j\rightarrow \infty }\frac{-\log |\xi _{w,\eta ,j} -\xi _{w,\eta ,j} '|_p}{\log H(\xi _{w,\eta ,j})}=\frac{2}{2+\eta },\\
	\limsup _{j\rightarrow \infty }\frac{\log H(\xi _{w,\eta ,j+1})}{\log H(\xi _{w,\eta ,j})}\leq w.
	\end{gather*}
	Hence, we have \eqref{eq:pmainconti2} by Lemma \ref{lem:bestquadp}.
\end{proof}

\subsection*{Acknowledgements}
The author would like to express his gratitude to Prof.~Shigeki Akiyama for the helpful comments on Lemma \ref{lem:seq} and improving the language of this paper.
The author is deeply grateful to Prof.~Hajime Kaneko for the helpful comments on Corollary \ref{cor:linind} and Lemma \ref{lem:seq}.
The author also wishes to express his thanks to Prof.~Dinesh S.~Thakur for several helpful comments.
The author would like to thank the referee for careful reading of this paper and giving helpful comments.

\end{document}